\renewcommand{\paragraph}{%
  \@startsection{paragraph}{4}%
  {\z@}{1ex \@plus 1ex \@minus .2ex}{-.5em}%
  {\normalfont\normalsize\bfseries}%
}
\newtheorem{theorem}{Theorem}[section]
\newtheorem{lemma}[theorem]{Lemma}
\newtheorem{proposition}[theorem]{Proposition}
\newtheorem{corollary}[theorem]{Corollary}
\newtheorem{example}[theorem]{Example}
\newtheorem{definition}[theorem]{Definition}
\newtheorem{remark}[theorem]{Remark}
\newcommand{\be}{\begin{equation}}
\newcommand{\ee}{\end{equation}}
\newcommand{\bthm}{\begin{theorem}}
\newcommand{\ethm}{\end{theorem}}
\newcommand{\blem}{\begin{lemma}}
\newcommand{\elem}{\end{lemma}}
\newcommand{\bpof}{\begin{proof}}
\newcommand{\epof}{\end{proof}}
\newcommand{\bcor}{\begin{corollary}}
\newcommand{\ecor}{\end{corollary}}
\newcommand{\bprop}{\begin{proposition}}
\newcommand{\eprop}{\end{proposition}}
\newcommand{\diag}{\mathrm{diag}}
\newcommand{\mc}[1]{\mathcal{#1}}
\newcommand{\mbb}[1]{\mathbb{#1}}
\newcommand{\mscr}[1]{\mathscr{#1}}
\newcommand{\msf}[1]{\mathsf{#1}}
\newcommand{\TODO}[1]{{\color{red}{[#1]}}}
\newcommand{\RR}{\mathbb{R}}
\newcommand{\NN}{\mathbb{N}}
\newcommand{\vct}[1]{\mathbb{#1}}  
\newcommand{\FS}{\mathrm{FinSet}}
\newcommand{\coFS}{\mathrm{FinSet}^{\mathrm{op}}}
\newcommand{\R}{\mathbb{R}}
\title{Any-Dimensional Polynomial Optimization \\ via de Finetti Theorems}
\author{Eitan Levin$^\dag$ and Venkat Chandrasekaran$^{\ddag}$ \thanks{Emails: \texttt{eitanl@uchicago.edu},  \texttt{venkatc@caltech.edu}} \vspace{0.1in} \\ $^\dag$ Department of Statistics\\ University of Chicago \\ Chicago, IL 60637 \vspace{0.1in}\\ $^\ddag$ Department of Computing and Mathematical Sciences\\ Department of Electrical Engineering \\ California Institute of Technology \\ Pasadena, CA 91125}
\date{August 4, 2026}
\begin{document}

\maketitle

\begin{abstract}

Polynomial optimization problems often arise in sequences indexed by dimension, and it is of interest to compute bounds on the optimal values of all problems in the sequence. 
Examples include certifying inequalities between symmetric functions or graph homomorphism densities that hold over vectors and graphs of all sizes, and computing the value of mean-field games viewed as limits of games with a growing number of players.  In this paper, we study such any-dimensional polynomial problems using the theory of representation stability, and we develop a systematic framework to produce hierarchies of improving bounds on their limiting optimal values.
Our bounds are obtained by solving finite-dimensional polynomial optimization problems (or their relaxations).
These bounds converge at explicit rates as a consequence of new de Finetti-type theorems pertaining to sequences of random arrays projecting onto each other in different ways.  The proofs of these theorems are based on applying results from probability to representations of certain categories.
We apply our framework to produce new bounds on problems arising in a number of application domains such as mean-field games, extremal graph theory, and symmetric function theory, and we illustrate our methods via numerical experiments.


\vspace{0.5cm}

\noindent \textbf{Keywords.} exchangeability, extremal combinatorics, representation theory, sums-of-squares, symmetric functions, symmetric means

\end{abstract}

\tableofcontents

\section{Introduction}\label{sec:intro}



Optimization problems specified by multivariate polynomials arise commonly in control, data science, dynamical systems, and theoretical computer science.
Such polynomial optimization problems (POPs) are NP-hard to solve in general, and much research effort has focused on producing tractable bounds on their optimal values using sums of squares certificates of polynomial nonnegativity~\cite{parrilo2003semidefinite,lasserre2001global,blekherman2012semidefinite}.  In many application domains such as game theory and extremal combinatorics, POPs arise naturally as sequences indexed by dimension and it is of interest to simultaneously bound all of the optimal values in the sequence.  The next few examples illustrate the ubiquity of these problems.

\begin{example}[Inequalities in moments]\label{ex:power_means}
    Fix a compact set $\Theta\subseteq\RR^d$ and consider the sequence of POPs:
    \begin{equation}\label{eq:power_means_general}
        u_n = \inf_{(x_1,\ldots,x_n)\in\Theta^n}f\left(\frac{1}{n}\sum_{i=1}^nx_i^{\alpha_1},\ldots,\frac{1}{n}\sum_{i=1}^nx_i^{\alpha_m}\right),
    \end{equation}
    where $f$ is an $m$-variate real polynomial and $\alpha_1,\ldots,\alpha_m\in\mbb N^d$ are multi-indices. Here $x^{\alpha}=\prod_{i=1}^dx_i^{\alpha_i}$ if $x\in\RR^d$ and $\alpha\in\mbb N^d$.
    Observing that $\frac{1}{n}\sum_{i=1}^nx_i^{\alpha}=\mbb E_{X\sim\mu(x)}X^\alpha$ where $\mu(x)=\frac{1}{n}\sum_{i=1}^n\delta_{x_i}$ is a discrete measure, computing $u_{\infty}=\inf_nu_n$ amounts to proving polynomial inequalities satisfied by moments of any measure supported on $\Theta$. Such problems also arise in the context of potential mean-field games~\cite{cardaliaguet2010notes} as well as in operator theory and quantum information~\cite{klep2025sums}.
    %
\end{example}

\begin{example}[Inequalities in symmetric functions]\label{ex:power_sums}
    Symmetric functions are certain symmetric polynomials defined for any number of variables, such as power sums $s_k(x)=\sum_ix_i^k$ and elementary symmetric functions $\sum_{i_1<\ldots<i_k}x_{i_1}\cdots x_{i_k}$. These are fundamental objects of study in enumerative combinatorics, representation theory, and analysis~\cite{macdonald1998symmetric,Stanley_Fomin_1999}. In particular, inequalities between symmetric functions, which hold regardless of the number of variables, are of interest in all of these areas~\cite{Hunter_1977,PROCESI1978219,Chavez_Garcia_Hurley_2023,khare2021sign}.
    Since any symmetric function can be written as a polynomial expression $f(s_1,\ldots,s_k)$ in power sums, checking whether a given symmetric function is nonnegative in all dimensions amounts to considering the sequence of POPs
    \begin{equation*}
        u_n = \inf_{x\in\RR^n} f\left(\sum_{i=1}^nx_i,\ldots,\sum_{i=1}^nx_i^m\right),
    \end{equation*}
    and checking whether $u_{\infty}=\inf_nu_n\geq0$. 
    The problem of checking $u_{\infty}\geq0$ given a polynomial $f$ cannot always be done by expressing $f$ as a sum of squares~\cite{acevedo2024symmetric}.
\end{example}
\begin{example}[Inequalities in graph densities]\label{ex:graph_densities}
    For graphs $H,G$, the \emph{homomorphism density} $t(H;G)$ of $H$ in $G$ is the fraction of maps between their vertex sets that are graph homomorphisms, and this fraction can be written as a polynomial in the entries of the adjacency matrix of $G$~\cite{raymond2018symmetric}. Many problems in extremal combinatorics can be formulated as proving inequalities between homomorphism densities that hold for all graph sizes~\cite{lovasz2012large,brosch_thesis}.
    Formally, given $c_1,\ldots,c_m\in\RR$ and graphs $H_1,\ldots,H_m$, we consider the sequence of POPs:
    \begin{equation}\label{eq:general_graph_densities}
        u_n = \inf_{\substack{\text{graphs }G,\\ |V(G)|=n}} ~ \sum_{i=1}^m c_i t(H_i;G),
    \end{equation}
    over graphs on $n$ vertices, and ask whether $u_\infty=\inf_nu_n\geq0$.  This is equivalent to asking whether the inequality $\sum_i c_i t(H_i;G)\geq0$ holds for graphs $G$ of all sizes.  The problem of checking whether $u_{\infty}\geq0$ given $c_i,H_i$ is undecidable in general, and there are nonnegative such expressions which are not sums of squares~\cite{hatami2011undecidability}. 
\end{example}

We refer to such sequences of polynomial minimization problems indexed by dimension as \emph{any-dimensional} POPs, and in each of the preceding examples, the central question is to compute $u_{\infty}=\inf_nu_n$ or to prove that $u_{\infty}\geq0$. As Example~\ref{ex:graph_densities} illustrates, this is in general undecidable. To address this difficulty, we adopt an optimization perspective and aim to compute a lower bound $\gamma\in\RR$ on $u_{\infty}$, or equivalently, on all the optimal values $u_n$ in the sequence, using a finite-time procedure. 
Although there is a large literature on computing such bounds for fixed-dimensional POPs~\cite{blekherman2012semidefinite}, producing bounds on one of the optimal values $u_n$ in the sequence does not, in general, yield bounds on the limiting value $u_{\infty}$.  
Also, while undecidability of general any-dimensional POPs implies that there are no finite-time procedures to decide, for example, whether $u_\infty$ is nonnegative, this does not preclude the existence of such procedures for producing some finite lower bounds on $u_\infty$.

In this paper, we develop a systematic framework for producing lower bounds on the limiting optimal value $u_{\infty}$ of any-dimensional POPs $(u_n=\inf_{x\in\Omega_n}p_n(x))_{n\in\NN}$ in terms of \emph{finite-dimensional POPs} of the form $\ell_n=\inf_{x\in\Omega_n}q_n(x)$, so that 
\begin{equation}\label{eq:nestedness}
    \ell_n \leq u_{\infty}\leq u_n,\quad \textrm{for all } n\in\NN.
\end{equation}
These lower-bounding problems have the same constraint sets as in the original sequence of problems, but their cost functions $q_n$ are distinct from $p_n$.  Moreover, these lower bounds $\ell_n$ converge at explicit rates to $u_\infty$.
Either solving the finite-dimensional problems defining $\ell_n$ or applying standard convex relaxations to them yields lower bounds for $u_\infty$. 
Our lower bounds can be viewed as searching over tractable families of \emph{any-dimensional certificates of nonnegativity}, and hence providing a computational procedure for proving inequalities valid in all dimensions.  The following are a few examples of the bounds we obtain using our framework.
\begin{example}[Mean-field games]\label{ex:mfg_intro}
    Consider the sequence of problems
    \begin{equation*}
        u_n=\inf_{x\in[-1,1]^n} 5\left(\frac{1}{n}\sum\nolimits_ix_i\right)^2 - 4\left(\frac{1}{n}\sum\nolimits_ix_i^2\right)\left(\frac{1}{n}\sum\nolimits_ix_i\right) - \frac{1}{n}\sum\nolimits_ix_i, 
    \end{equation*}
    which computes the value of an $n$-player extension of the non-convex-concave two-player game studied in~\cite[Ex.~3.2]{parrilo2006games}.  
    The limiting optimal value $u_{\infty}$ computes the value of a mean-field limit of these games, and we seek lower bounds on this value.
    Our framework yields the lower bounds for $n\geq 3$
    \begin{equation*}
        u_{\infty}\geq \ell_n= \inf_{x\in[-1,1]^n} 5\frac{1}{\binom{n}{2}}\sum\nolimits_{i<j}x_ix_j - 4\frac{1}{n(n-1)}\sum\nolimits_{i\neq j}x_i^2x_j - \frac{1}{n}\sum\nolimits_ix_i.
    \end{equation*}
    Moreover, we get a nonasymptotic bound for the convergence of our bounds, which reads 
    \begin{equation}\label{eq:theor_bd_mfg}
        u_n-\ell_n \leq \frac{60}{n},
    \end{equation}
    that is, $\ell_n$ converges to $u_{\infty}$ at a rate of $O(1/n)$ by~\eqref{eq:nestedness}.
    We numerically illustrate the convergence of $\ell_n$ to this asymptotic optimal value, and validate our theoretical rate of convergence, see Figure~\ref{fig:mfg_numerics} and Example~\ref{ex:mfg} for more details.
    \begin{figure}[h]
\centering
\begin{subfigure}{.48\textwidth}
  \centering
  \includegraphics[width=\linewidth]{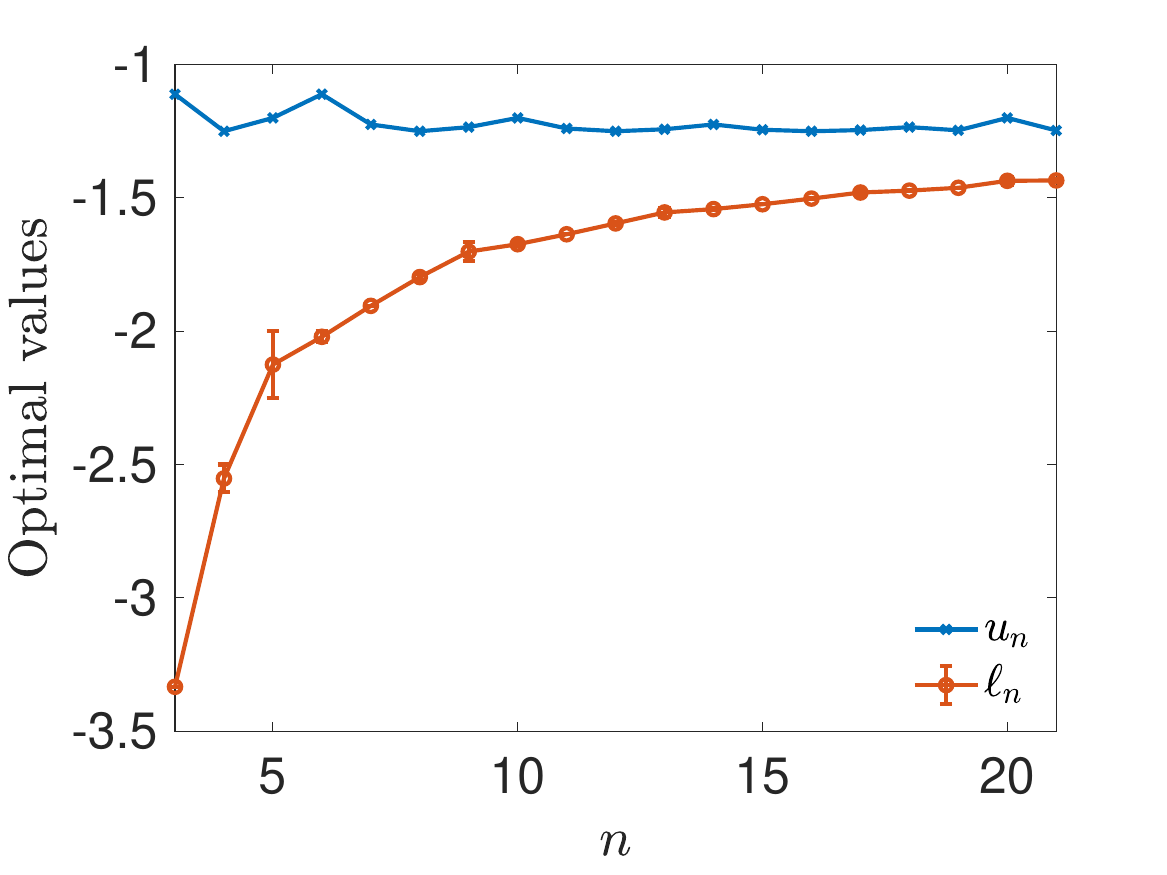}
  \caption{Upper bounds on $u_n$ and intervals containing $\ell_n$.}
  \label{fig:mfg_bounds}
\end{subfigure}%
\hfill
\begin{subfigure}{.48\textwidth}
  \centering
  \includegraphics[width=\linewidth]{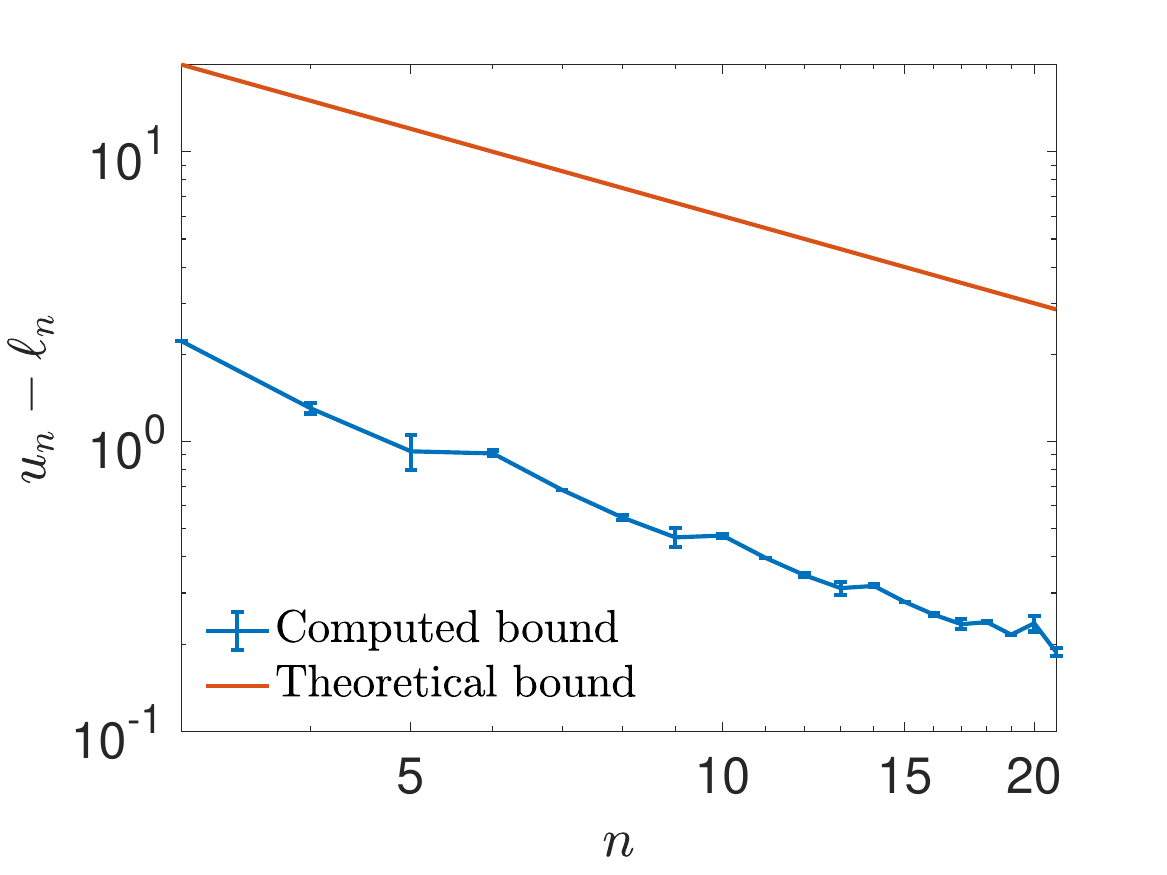}
  \caption{Comparing $u_n-\ell_n$ and bound from~\eqref{eq:theor_bd_mfg}.}
  \label{fig:mfg_bound_diffs}
\end{subfigure}
\caption{Numerical results for the mean-field game in Example~\ref{ex:mfg_intro}.}
\label{fig:mfg_numerics}
\end{figure}
\end{example}

\begin{example}[Nonnegative symmetric functions that are not SOS]\label{ex:SAGE_nonSOS}
Our framework allows us to prove new inequalities between symmetric functions that are inaccessible to previous, sums-of-squares-based approaches. 
Recall from Example~\ref{ex:power_sums} that any symmetric function can be written as a polynomial in power sums $s_k(x)=\sum_ix_i^k$. 
Consider the symmetric function
\begin{equation}\label{eq:sage_nonSOS_poly}
    \begin{aligned}
        f(s_1,s_2,s_4,s_6) = &\tfrac{1}{2}s_6 - \tfrac{15}{16}s_4s_2 + \tfrac{1}{16}s_4s_1^2 + \tfrac{15}{32}s_2^3 - \tfrac{3}{32}s_2^2s_1^2 - \tfrac{1}{32}s_2s_1^4 + \tfrac{1}{32}s_1^6\\ &+ \tfrac{3}{8}s_4 - \tfrac{9}{16}s_2^2 + \tfrac{3}{8}s_2s_1^2 - \tfrac{3}{16}s_1^4 + 1.
    \end{aligned}
\end{equation}
Checking whether or not this function is nonnegative in all dimensions reduces to setting $u_n=\inf_{x\in\RR^n}f(s_1(x),s_2(x),s_4(x),s_6(x))$ and checking whether $u_{\infty} = \inf_n u_n \geq0$. 
Notably, the polynomial~\eqref{eq:sage_nonSOS_poly} is not a sum of squares in any dimension $n\geq 2$, so its nonnegativity cannot be proved using the relaxations of~\cite{acevedo2024symmetric}, see Example~\ref{ex:SAGE_nonSOS_explicit} for details. 
We prove its nonnegativity in all dimensions by using our framework to produce the lower bound
\begin{equation}\label{eq:motzkin_bound}
    \ell_2 = \inf_{x\in\RR^2}x_1^4x_2^2+x_1^2x_2^4-3x_1^2x_2^2+1
\end{equation}
consisting of minimizing the well-known Motzkin polynomial. The polynomial in~\eqref{eq:motzkin_bound} is nonnegative by the AM-GM inequality, proving that $\ell_2\geq0$ and hence $u_{\infty}\geq0$.
\end{example}

\begin{example}[Graph densities]\label{ex:graph_densities_intro}
To certify inequalities in graph densities, we apply our framework to the sequence of problems~\eqref{eq:general_graph_densities} to produce the lower bounds
\begin{equation}\label{eq:general_inj_graph_densities}
    \ell_n = \inf_{\substack{\text{graphs }G,\\ |V(G)|=n}} ~ \sum_{i=1}^m c_i t_{\mathrm{inj}}(H_i;G),
\end{equation}
which are almost identical to~\eqref{eq:general_graph_densities} except that the homomorphism densities $t(H_i;G)$ are replaced with \emph{injective} homomorphism densities $t_{\mathrm{inj}}(H_i;G)$. The latter gives the fraction of injective maps between the vertex sets of $H_i$ and of $G$ that are graph homomorphisms. 
Moreover, we get the nonasymptotic convergence bound
\begin{equation*}
    u_n-\ell_n \leq \frac{V(V-1)\sum_{i=1}^m|c_i|}{n},\quad \textrm{where } V=\max_{i=1,\ldots,m}|V(H_i)|.
\end{equation*}
As an application, we prove that the Ramsey multiplicity of the graph consisting of a triangle with a pendant edge is lower-bounded by 0.0476, see Example~\ref{ex:ramsey_mult}. 
\end{example}

\begin{example}[Graph numbers]\label{ex:graph_nums_intro}
Unlike inequalities in graph densities, inequalities in their unnormalized counterparts have been less extensively studied. 
For example, let $\mathrm{inj}(H;G)$ be the number of injective graph homomorphisms from $H$ to $G$. This extends to a polynomial defined on all weighted graphs as explained in Section~\ref{sec:graph_densities}.
    Consider 
    \begin{equation}\label{eq:un_graph_nums_intro}
        u_n = \inf_{X\in\mbb S^n}\mathrm{inj}(P_3;X) - \mathrm{inj}(K_2\sqcup K_2; X),\quad \textrm{s.t. } X\geq0,\ \mathbbm{1}^\top X\mathbbm{1}\leq 1.
    \end{equation}
    The objective is a homogeneous quadratic, so certifying $u_{\infty}\geq\gamma$ implies $\mathrm{inj}(P_3;X) - \mathrm{inj}(K_2\sqcup K_2; X)\geq \gamma \mathrm{hom}(K_2;X)^2$ for all $X\geq0$ of all sizes. 
    Our lower bounds take the form
    \begin{equation}\label{eq:ln_graph_nums_intro}
        \ell_n = \inf_{X\in\mbb S^n} \frac{n^3}{n(n-1)(n-2)}\left[\mathrm{inj}(P_3;X)-\frac{n+1}{n-3}\mathrm{inj}(K_2\sqcup K_2;X)\right],\quad \textrm{s.t. } X\geq0,\ \mathbbm{1}^\top X\mathbbm{1}\leq 1.
    \end{equation}
    We numerically compute upper bounds on $u_n$ and intervals containing $\ell_n$ for $n\in[4,9]$, with the results shown in Table~\ref{tab:graph_inj_numerics}, see Example~\ref{ex:graph_nums} for more details.

    \begin{table}[h!]
    \centering
    \begin{tabular}{c|cccccc}
        $n$ & 4 & 5 & 6 & 7 & 8 & 9\\ \hline
        $u_n$ & -0.50 & -0.50 & -0.67 & -0.67 & -0.75 & -0.75\\
        $\ell_n$ & -6.67 & [-3.30, -3.12] & -2.8 & [-2.33, -2.18] & -2.06 & [-1.88, -1.81]
    \end{tabular}
    \caption{Bounds on $u_n$ and intervals containing $\ell_n$ for Example~\ref{ex:graph_nums_intro}.}
    \label{tab:graph_inj_numerics}
    \end{table}
\end{example}
We construct and analyze these bounds using new generalizations of de Finetti's theorem from probability~\cite{de1929funzione,de1937prevision,austin_survey}.
We proceed to illustrate this construction and the role de Finetti's theorem plays in it in the context of Example~\ref{ex:power_means}, highlighting the main steps that we shall generalize in the rest of the paper.

\subsection{A First Illustration}\label{sec:case_study}
We consider the sequence of POPs~\eqref{eq:power_means_general}, which can be written as
\begin{equation*}
    u_n = \inf_{x\in\Theta^n}p_n(x),\quad \textrm{where } p_n(x) = f(\mbb E_{\mu(x)}X^{\alpha_1},\ldots,\mbb E_{\mu(x)}X^{\alpha_m}),
\end{equation*}
and $\mu(x)=\frac{1}{n}\sum_{i=1}^n\delta_{x_i}$. In particular, note that for any $n | N$ the map $(x_1,\ldots,x_n) \mapsto (\underbrace{x_1,\ldots,x_1}_{N/n \mathrm{~times}}, \allowbreak \ldots, \allowbreak \underbrace{x_n,\ldots,x_n}_{N/n \mathrm{~times}})$ takes a feasible point of the $n$'th problem to a feasible point of the $N$'th problem with the same objective value.  This implies that the optimal values of \eqref{eq:power_means_general} satisfy $u_{nk} \leq u_{n}$ for each $n,k$.  The goal is to obtain lower bounds on the limiting optimal value $u_\infty = \inf_n u_n$, which may be expressed as:
\begin{equation}\label{eq:optim_over_measures}
    u_\infty = \inf_{\mu \in \mc P(\Theta)} p(\mu),\quad \textrm{where } p(\mu) = f\left(\mathbb{E}_\mu[X^{\alpha_1}], \dots, \mathbb{E}_\mu[X^{\alpha_m}]\right),
\end{equation}
where $X \in \R^d$ is a random variable and $\mc P(\Theta)$ is the set of probability measures supported on $\Theta$.  

We begin by rewriting the objective $p(\mu)$ as a linear function of a product measure. Suppose $f$ is equal to a single monomial with exponent $\beta \in \mathbb{N}^m$ of degree $k=\mathbbm{1}^\top\beta$.  In this case, we have that $f\left(\mathbb{E}_\mu[X^{\alpha_1}], \dots, \mathbb{E}_\mu[X^{\alpha_m}]\right) = \prod_{i=1}^m (\mathbb{E}_\mu[X^{\alpha_i}])^{\beta_i}$, which can be rewritten as:
\begin{equation*}
    \prod_{i=1}^m (\mathbb{E}_\mu[X^{\alpha_i}])^{\beta_i} = \prod_{i=1}^m \mathbb{E}_{\mu^{\otimes \beta_i}}[Y_1^{\alpha_i} \cdots Y_{\beta_i}^{\alpha_i}] = \mathbb{E}_{\mu^{\otimes 1' \beta}}[(Z_{1,1}^{\alpha_1} \cdots Z_{1,\beta_1}^{\alpha_1}) \cdots (Z_{m,1}^{\alpha_m} \cdots Z_{m,\beta_m}^{\alpha_m})]
\end{equation*}
where the $Y_i \in \R^d$ and the $Z_{i,j} \in \R^d$ are random variables.  In both equalities, we appeal to the observation that the expectation of a product of independent random variables is the product of their expectations.  Thus, when $f$ is a monomial of degree $k$, we have identified a monomial $q$ in $dk$ variables such that:
\begin{equation*}
    p(\mu) = \mathbb{E}_{\mu^{\otimes k}}[q]
\end{equation*}
By linearity, such a polynomial $q$ exists for any polynomial $f$. 
In particular, we can rewrite the sequence of objectives $(p_n)$ in the original sequence of POPs in terms of $q$, namely
\begin{equation}\label{eq:sampling_rep_example}
    p_n(x) = \mbb E_{\mu(x)^{\otimes k}}[q] = \mbb E[q(L_{k, n}x)],
\end{equation}
where $L_{k,n}\colon(\RR^d)^n\to(\RR^d)^k$ is a random linear map sampling $k$ coordinates uniformly at random with replacement.

We show next that we can obtain lower bounds on $u_\infty$ by changing the sampling map appearing in the identity~\eqref{eq:sampling_rep_example}.
Specifically, for $k\leq n$ let $\widetilde L_{k,n}\colon (\RR^d)^n\to(\RR^d)^k$ be a random linear map sampling $k$ coordinates \emph{without} replacement, and define the new sequence of polynomials
\begin{equation}\label{eq:dual_cost_pwr_means}
    q_n(x) = \mbb E[q(\widetilde L_{k,n}x)] = \frac{1}{|\msf S_n|}\sum_{\sigma\in\msf S_n}q(\sigma\cdot(x_1,\ldots,x_n)) = \mathrm{sym}_nq(x),
\end{equation}
where we have extended a polynomial $q$ on length-$k$ sequences to a polynomial on length-$n$ sequences that only depends on their first $k$ entries, and $\msf S_n$ is the group of permutations on $n$ letters acting by permuting the coordinates of $x$. 
We show that minimizing the new sequence of polynomials $(q_n)$ over the same sequence of constraint sets gives our desired lower bounds. Formally, setting
\begin{equation}\label{eq:lower_bound_pwr_means}
    \ell_n = \inf_{x\in\Theta^n}q_n(x),
\end{equation}
for each $n\geq k$, we show that $\ell_n\leq u_{\infty}$. To this end, we reformulate~\eqref{eq:lower_bound_pwr_means} in terms of probability distributions
\begin{equation}
    \ell_n = \inf_{\nu_n\in\mc P(\Theta^n)} \mbb E_{\nu_n}[q_n] = \inf_{\nu_n\in\mc P(\Theta^n)} \mbb E_{\nu_n}[\mathrm{sym}_nq].
\end{equation}
Next, because $\mathrm{sym}_nq$ is permutation-invariant, we can equivalently optimize only over exchangeable distributions $\nu_n$ on $\Theta^n$, meaning $(X_1,\ldots,X_n)\overset{d}{=}(X_{\pi(1)},\ldots,X_{\pi(n)})$ for any permutation $\pi\in\msf S_n$ if $(X_1,\ldots,X_n)$ is drawn from $\nu_n$. Moreover, we have $\mbb E_{\nu_n}[\mathrm{sym}_nq] = \mbb E_{\nu_n}[q]$ if $\nu_n$ is exchangeable, as can be seen directly from~\eqref{eq:dual_cost_pwr_means}. Therefore, we have
\begin{equation}
    \ell_n = \inf_{\nu_n\in\mc P(\Theta^n)} \mbb E_{\nu_n}[q]  ~~~ \mathrm{s.t.} ~~~ \nu_n \mathrm{~is~exchangeable}.
\end{equation}
Since $q$ only depends on a sequence of length $n$ via its first $k$ elements, we can further reformulate the above problem in terms of distributions on $\Theta^k$, namely
\begin{equation}\label{eq:lower_bounds_reform_example}
    \ell_n = \inf_{\nu_k\in\mc P(\Theta^k)} \mbb E_{\nu_k}[q] \quad \textrm{s.t.\ there is exchangeable } \nu_n\in\mc P(\Theta^n) \textrm{ projecting onto } \nu_k,
\end{equation}
where the projection condition in the constraint means that $\nu_k$ equals the pushforward of $\nu_n$ under the projection $(x_1,\ldots,x_n)\mapsto (x_1,\ldots,x_k)$ extracting the first $k$ entries.
In comparison, by combining~\eqref{eq:optim_over_measures} and~\eqref{eq:sampling_rep_example}, we can write the limiting optimal value $u_{\infty}$ as
\begin{equation}\label{eq:limiting_optimal_value_reform_example}
    u_{\infty} = \inf_{\nu_k\in\mc P(\Theta^k)}\mbb E_{\nu_k}[q] \quad \textrm{s.t.\ } \nu_k=\mu^{\otimes k} \textrm{ for some } \mu\in\mc P(\Theta).
\end{equation}
Comparing~\eqref{eq:lower_bounds_reform_example} and~\eqref{eq:limiting_optimal_value_reform_example}, it is now clear that $\ell_n\leq u_{\infty}$, since for any $\mu\in\mc P(\Theta)$ the product measure $\nu_k=\mu^{\otimes k}$ is feasible for~\eqref{eq:lower_bounds_reform_example} as it is the projection of the larger exchangeable measure $\mu^{\otimes n}$.
We have thus shown that the finite-dimensional problem $\ell_n$ from~\eqref{eq:lower_bound_pwr_means} gives a lower bound on $u_{\infty}$ for each $n$. If $\Theta$ is semialgebraic, then the problem~\eqref{eq:lower_bound_pwr_means} is a fixed-dimensional POP in dimension $d \cdot n$. 

Using the formulation~\eqref{eq:lower_bounds_reform_example}, we can further see that the lower bounds $(\ell_n)_{n\geq k}$ are monotonically increasing to $u_{\infty}$. Indeed, we have $\ell_n\leq \ell_N$ for $n\leq N$ because if $(X_1,\ldots,X_N)$ is an exchangeable random vector of length $N$ then extracting its first $n$ coordinates gives an exchangeable vector $(X_1,\ldots,X_n)$ of length $n$. 
Taking $n\to\infty$ gives
\begin{equation}\label{eq:limiting_lower_bd}
    \ell_{\infty} = \sup_n\ell_n = \inf_{(\nu_n)_{n\in\NN}} ~ \mathbb{E}_{\nu_k}[q] ~~~ \mathrm{s.t.} ~~~ \nu_n \in \mc P(\Theta^n) \mathrm{~is~exchangeable}, ~ \nu_N \mathrm{~projects~to~} \nu_n \mathrm{~for~} N \geq n.
\end{equation}
Equivalently, such sequences of measures $(\nu_n)$ correspond to distributions of infinite exchangeable arrays $(X_1,X_2,\ldots)$. Using de Finetti's theorem, we conclude that such sequences $(\nu_n)$ are precisely mixtures of sequences of product measures $(\mu^{\otimes n})$ for $\mu\in\mc P(\Theta)$, proving that $\ell_{\infty}=u_{\infty}$ and hence that our sequence of lower bounds converges to $u_{\infty}$. We can further quantify the rate of this convergence using a finite variant of de Finetti's theorem due to Diaconis and Freedman~\cite{diaconis_freedman}. Specifically, comparing sampling with replacement in the original sequence of objectives~\eqref{eq:sampling_rep_example} with sampling without replacement in the new sequence of objectives~\eqref{eq:dual_cost_pwr_means}, we obtain
\begin{equation} \label{eq:power-means-step5}
    u_n - \ell_n \lesssim \frac{1}{n}.
\end{equation}
Summarizing, we expressed the original sequence of objectives $(p_n)$ in terms of sampling with replacement applied to a fixed polynomial in~\eqref{eq:sampling_rep_example}. Changing the sampling map to sampling without replacement yields a new sequence of objectives~\eqref{eq:dual_cost_pwr_means} that we minimize over the same sequence of constraints to obtain lower bounds on the entire original sequence. These lower bounds converge at an explicit $O(1/n)$ rate due to de Finetti's theorem. 

In this paper, we generalize the above construction to other any-dimensional POPs, including Examples~\ref{ex:power_sums} and~\ref{ex:graph_densities}. Our generalization is based on expressing the sequence of objective functions in terms of more general sampling maps, which we then modify appropriately to obtain a new sequence of objectives. Minimizing the new objective functions over the same sequence of constraint sets, we obtain lower bounds converging at explicit $O(1/n)$ rates. This convergence is a consequence of new generalizations of de Finetti's theorem that we prove specifically for that purpose.


\subsection{Our Contributions}
Our framework consists of three components---formally defining any-dimensional POPs, proving new de Finetti-type theorems, and applying these theorems to obtain lower bounds on the limiting optimal values of any-dimensional POPs in terms of finite-dimensional POPs.  We outline each of these in some more detail:




\paragraph{Formalizing any-dimensional POPs (Section~\ref{sec:any_dim_poly_probs}).} For concreteness, denote a sequence of POPs by $\{u_n = \inf_{x \in \Omega_n} ~ p_n(x)\}$ with semialgebraic constraints $\Omega_n \subseteq \vct V_n$ and polynomial objectives $p_n : \vct V_n \rightarrow \R$, where $\vct V_n$ is the vector space underlying the $n$'th problem.  To obtain lower bounds on $u_\infty = \inf_n u_n$ via finite-time procedures, we require that the problems in the sequence are related to each other in some fashion, as without such relations we would be seeking lower bounds on arbitrary sequences of numbers.  In the illustration in Section~\ref{sec:case_study}, these relations are given by the map $\delta_{N,n}(x_1,\dots,x_n) =  (\underbrace{x_1,\ldots,x_1}_{N/n \mathrm{~times}}, \allowbreak \ldots, \allowbreak \underbrace{x_n,\ldots,x_n}_{N/n \mathrm{~times}})$ from $\R^{d \times n}$ to $\R^{d \times N}$ for $n | N$.  This map takes a feasible point of the $n$'th problem to a feasible point for the $N$'th problem, and the sequence of costs $(p_n)$ satisfies $p_n = p_N \circ \delta_{N,n}$.  The consequence is that the optimal values satisfy $u_{N} \leq u_n$ for $n | N$.  The sequence of problems in Example~\ref{ex:graph_densities} can also be understood as being related across dimension by an appropriate type of duplication map (see Example~\ref{ex:freely-described-POPs}), and the corresponding sequence of optimal values also satisfy $u_N \leq u_n$ for $n | N$.  In other cases such as Example~\ref{ex:power_sums}, one can check that the problems are similarly related across dimension by the zero-padding map $\zeta_{N,n}(x) = (x,0_{N-n})$ for $n \leq N$, so that the associated optimal values satisfy $u_{N} \leq u_n$ for $n\leq N$.  We refer to such sequences of POPs related via embeddings across problem dimensions as \emph{freely-described} (see Definition~\ref{def:freely_described_pops}).  In Section~\ref{sec:case_study}, the lower bounds are given by a sequence of POPs $\{\ell_n = \inf_{x \in \Omega_n} ~ q_n(x)\}$ in which the costs $q_n = \mathrm{sym}_nq$ are higher-dimensional symmetrizations of a fixed polynomial $q$; we refer to such sequences of POPs as \emph{freely-symmetrized} (see Definition~\ref{def:freely_sym_pops}).  Generalizing the case study in Section~\ref{sec:case_study}, our objective is to obtain lower bounds via freely-symmetrized POPs on the limiting optimal values of freely-described POPs.  The definitions of these two types of sequences of POPs are based on ideas from \emph{representation stability}, which concerns sequences of group representations related across dimension~\cite{CHURCH2013250}. 

\paragraph{Generalizing de Finetti (Section~\ref{sec:deFinetti}).} De Finetti's theorem characterizes an infinite sequence of measures that project onto each other as in~\eqref{eq:limiting_lower_bd}.  Diaconis and Freedman~\cite{diaconis_freedman} show that one can obtain a dense subset of such infinite sequences of measures by sampling with replacement the coordinates of finite random vectors.  We broaden this perspective in two ways.  First, we generalize the notion of infinite sequences of measures that project onto each other by again leveraging representation stability.  In particular, for sequences of representations of permutation groups related across dimension, we consider projections induced by these relations and associated sequences of exchangeable measures which project onto each other (see Definition~\ref{def:fdm}).  Second, we generalize the idea of sampling with replacement using actions of maps between finite sets on the underlying vector spaces (see Definitions~\ref{def:coFS_mod} and \ref{def:FS_mod}).  These notions enable us to generalize the result of Diaconis and Freedman in Theorems~\ref{thm:DeFin_for_coFS} and \ref{thm:dual_deFin_general}.  
As an illustration, we obtain the following new `dual' de Finetti theorem:
\begin{theorem}[informal, special case; see Theorem~\ref{thm:dual_deFin_general}]\label{thm:dual_deFin_informal}
    Consider any sequence of exchangeable random vectors $(X^{(n)}\in\RR^n)_n$ projecting onto each other in the following sense
\begin{equation*}
\begin{aligned}
X^{(nk)} = \Big(\underbrace{X^{(nk)}_1,\ldots, X^{(nk)}_k}, ~~~ &\cdots ~~~, \underbrace{X^{(nk)}_{(n-1)k+1},\ldots,X^{(nk)}_{nk}}\Big)\\
&~\, \Big\downarrow\\
X^{(n)} \overset{d}{=} \Big(X^{(nk)}_1+\ldots+X^{(nk)}_k,  ~~~ &\cdots ~~~, X_{(n-1)k+1}^{(nk)} + \ldots + X^{(nk)}_{nk}\Big),
\end{aligned}
\end{equation*}
with $\sup_n\mbb E\|X^{(n)}\|_1<\infty$.
Any such sequence can be approximated arbitrarily well by sequences constructed as follows: For a finite random vector $Y\in\RR^d$, and a uniformly random map $F_{n,d}\colon[d]\to[n]$ that is independent of $Y$, construct the sequence $(\tau(F_{n,d})Y\in\RR^n)_{n\in\NN}$ where the $i$th entry of $\tau(F_{n,d})Y$ is given by $\sum_{j\in F_{n,d}^{-1}(i)}Y_j$.
%
\end{theorem}
Applying $\tau(F_{n,d})$ to $Y$ amounts to randomly binning the coordinates of $Y$ into $n$ bins and summing all the coordinates in each bin. Its adjoint $\tau(F_{n,d})^\star$ with respect to the standard inner products corresponds to sampling $d$ entries with replacement from a length-$n$ vector, hence the duality with classical de Finetti.
The proof is based on a comparison between random partitions and random equipartitions (partitions into equal-sized parts), analogously to the proof of classic de Finetti based on a comparison between sampling with and without replacement.
Our results also apply to higher-dimensional arrays. In particular, we derive new dual analogs of results from Aldous--Hoover theory and the theory of graphons~\cite{kallenberg2005probabilistic,lovasz2012large}.

\paragraph{Lower Bounds on Any-Dimensional POPs (Section~\ref{sec:free_poly_opts}).} We apply our generalized de Finetti theorems to construct lower bounds for the limiting optimal values of freely-described POPs in Section~\ref{sec:free_poly_opts}.  As we remarked earlier in Section~\ref{sec:case_study}, the key step in our construction is expressing objective functions of such POPs in terms of appropriate sampling maps applied to a fixed-dimensional polynomial as in~\eqref{eq:sampling_rep_example}.  We formalize this point in Theorem~\ref{thm:free_sym_bds}. The following is an informal version of Theorem~\ref{thm:free_sym_bds} applied to the problem of minimizing symmetric functions as in Example~\ref{ex:power_sums}.
\begin{theorem}[informal, special case; see Theorem~\ref{thm:free_sym_bds}]\label{thm:free_sym_bds_informal}
    Consider $u_n=\inf_{x\in\Omega_n}f(s_1(x),\ldots,s_k(x))$ where $f(s_1,\ldots,s_k)$ is a symmetric function as in Example~\ref{ex:power_sums} and $\Omega_n$ is either $\RR^n$, the simplex $\Delta^{n-1}$, or the $\ell_1$ unit ball. 
    \begin{enumerate}
        \item There exists a fixed polynomial $q$ in $d=\deg f(s_1,\ldots,s_k)$ variables satisfying 
        \begin{equation}\label{eq:sym_func_representation}
            f(s_1(x),\ldots,s_k(x))=\allowbreak\mbb Eq\left(\sum_{i\in F_{d,n}^{-1}(1)}x_i,\ldots,\sum_{i\in F_{d,n}^{-1}(d)}x_i\right),
        \end{equation}
        for all $x\in\RR^n$ and all $n\in\NN$, where $F_{d,n}\colon[n]\to[d]$ is a uniformly random map. 

        \item The optimal values $\ell_n = \inf_{x\in\Omega_n}q_n$ yield increasing lower bounds $\ell_n\leq u_{\infty}$, where for $d|n$ we have
        \begin{equation}\label{eq:sym_func_dual}
            q_n(x) = \mbb Eq\left(\sum_{i\in \Psi_{d,n}^{-1}(1)}x_i,\ldots,\sum_{i\in \Psi_{d,n}^{-1}(d)}x_i\right),
        \end{equation}
        and $\Psi_{d,n}\colon[n]\to[d]$ is a uniformly random equipartition.
        
        \item If $\Omega_n$ is compact, we get $u_n-\ell_n = O(1/n)$.
    \end{enumerate}
\end{theorem}
Note that $f(s_1,\ldots,s_k)$ and $q$ are related precisely by the sampling procedure arising in our generalization of de Finetti's theorem in Theorem~\ref{thm:dual_deFin_informal}, and that $q_n$ is obtained from $f(s_1,\ldots,s_k)$ by modifying this sampling procedure, in analogy to~\eqref{eq:sampling_rep_example} and~\eqref{eq:dual_cost_pwr_means} in Section~\ref{sec:case_study}.
The construction of $q$ given $f(s_1,\ldots,s_k)$ is fully automatic, much like in Section~\ref{sec:case_study}.
For the symmetric function in Example~\ref{ex:SAGE_nonSOS}, the fixed polynomial $q$ is the Motzkin polynomial.

Our results suggest a duality between the duplication and zero-padding embeddings in the following sense. For freely-described POPs related across dimension by duplication (resp. zero-padding) maps, the lower bounds generalizing~\eqref{eq:limiting_lower_bd} optimize over measures projecting onto each other by the adjoint of zero-padding (resp. duplication) maps.  
Our final bounds are given in terms of freely-symmetrized POPs consisting of the same constraint set as the original sequence of freely-described POPs but with a different sequence of costs.  
We also interpret and analyze our bounds from the perspective of any-dimensional certificates of nonnegativity in Section~\ref{sec:any_dim_nonneg_cones}.  

\paragraph{Applications (Section~\ref{sec:construct_bds}).} We illustrate our methods with several examples in Section~\ref{sec:construct_bds}, with the code to reproduce all our numerical results available at \url{https://github.com/eitangl/anyDimPolyOpt}.  In some cases, we clarify the relationship between any-dimensional POPs appearing in the literature, such as between the monomial means of Example~\ref{ex:power_means} and power means leveraged in~\cite{acevedo2024power} (Section~\ref{sec:power_means}), or between the homomorphism densities of Example~\ref{ex:graph_densities} and their injective analogs (Section~\ref{sec:graph_densities}). These are revealed to be consequences of our generalized de Finetti theorems.  
In other cases, we compute new bounds for symmetric functions (Section~\ref{sec:symmetric_funcs}) and homomorphism numbers (Section~\ref{sec:graph_numbers}), which are derived via bases for symmetric functions obtained using our de Finetti theorems; to the best of our knowledge, these bases and their applications have not been explored previously in the literature.

\paragraph{Paper Organization.} Section~\ref{sec:background} provides background on representation stability, which serves as a foundation for the later sections.  For readers primarily interested in the optimization applications, it is possible to get to the illustrations of our framework in Section~\ref{sec:construct_bds} after reading Sections~\ref{sec:background}, \ref{sec:any_dim_poly_probs}, \ref{sec:deFinetti_intro}, and \ref{sec:free_sym_bds}.  This shorter route through the paper presents the statements of our generalizations of de Finetti's theorem in Section~\ref{sec:deFinetti_intro} as well as statements and proofs of our results on freely-symmetrized bounds for freely-described POPs in Section~\ref{sec:free_sym_bds}, and these suffice for the developments in Section~\ref{sec:construct_bds}.  On the other hand, for readers primarily interested in a full treatment of our generalizations of de Finetti's theorem, it is possible to follow all of Section~\ref{sec:deFinetti} directly after Section~\ref{sec:background}. 

\subsection{Related Work}
Our work is related to several areas. We now survey past work in these areas and explain the connection with our own results in more detail.

\paragraph{Polynomial Optimization.} Leveraging the literature on sums-of-squares and moment (SOS) methods for fixed-dimensional POPs~\cite{parrilo2003semidefinite, lasserre2001global, blekherman2012semidefinite}, a recent body of work has developed convex relaxations for any-dimensional problems in special cases.  SDP relaxations for unconstrained problems of the form of Example~\ref{ex:power_means} for the particular case of $d=1$ were studied in~\cite{acevedo2024power}, while specific unconstrained cases of Example~\ref{ex:power_sums} were studied in~\cite{acevedo2023wonderful,acevedo2024symmetric} (see Section~\ref{sec:symmetric_funcs} for a `higher-dimensional' generalization of Example~\ref{ex:power_sums}). There is also a long line of work on certifying inequalities between graph homomorphism densities using SOS techniques, also known as flag algebras in the literature~\cite{razborov_2007,raymond2018symmetric,raymond2018symmetry,brosch_thesis}.  Setting up these SDP relaxations often involves serious and case-specific analytic work related to taking certain limits of symmetry-reduced SDPs. These, in turn, are based on understanding decompositions into irreducibles of spaces of polynomials with increasing degrees; see~\cite{acevedo2024power,acevedo2024symmetric,polak2022symmetry,brosch_thesis} for example.  In contrast, our methods are more straightforward to apply, as our finite-dimensional POPs for obtaining lower bounds can be constructed automatically for a given any-dimensional POP, including in settings where the relevant decompositions into irreducibles become quite involved~\cite{polak2022symmetry}.  
SDP relaxations of these finite-dimensional POPs can subsequently be computed using standard software packages. 
Another hierarchy of SDP relaxations was developed in~\cite{klep2025sums} for a class of optimization problems over measures encompassing Example~\ref{ex:power_means}. In that work, the authors apply techniques inspired by noncommutative SOS directly to the reformulation of the limiting optimal value as an optimization problem over measures~\eqref{eq:optim_over_measures}. It is not, however, clear how to extend their approach to other any-dimensional POPs, such as Examples~\ref{ex:power_sums}-\ref{ex:graph_densities}, where the limiting optimal value does not admit such a formulation.
There is also a literature studying the limitations of SOS-based relaxations, and more generally undecidability, for the above families of any-dimensional POPs, including~\cite{hatami2011undecidability, acevedo2023wonderful,Blekherman_Raymond_Wei_2024,acevedo2024symmetric, acevedo2024power, klep2025sums}. Some of these works involve explicit constructions or reductions, while others involve algebra-geometric methods such as tropicalization and Timofte's half degree principle.

A class of any-dimensional problems that we do not treat in this paper arises in quantum information, where it is of interest to minimize polynomials in quantum states over Hilbert spaces of any dimension (possibly even infinite dimension)~\cite{navascues2008convergent,npa,burgdorf2013tracial,ligthart2023inflation,ligthart2023convergent,renou_xu,klep2024state}. Such problems can be viewed as noncommutative generalizations of Example~\ref{ex:power_means}, where a probability distribution acting linearly on polynomials in commuting variables is replaced by a quantum state acting linearly on polynomials in noncommutative observables~\cite{huber2024positivity,klep2025sums}. Several hierarchies of relaxations have been developed for such problems in the above works. In particular, hierarchies based on quantum de Finetti theorems, derived similarly to Section~\ref{sec:case_study} above, have been proposed and analyzed in~\cite{ligthart2023inflation,ligthart2023convergent,renou_xu}. To our knowledge, no convergence rate has been proved for such relaxations, in contrast to our hierarchy.

\paragraph{Representation Stability.}
The study of sequences of representations related across dimensions was initiated in~\cite{CHURCH2013250}, where it was observed that the decompositions into irreducibles of such sequences often stabilize. Such stabilization was also observed in the context of polynomial optimization for some spaces of polynomials~\cite{riener2013exploiting, raymond2018symmetric, debus2020reflection, moustrou2021symmetry}. The seminal paper~\cite{FImods} studied sequences of representations of symmetric groups from the perspective of representations of categories, and proved that decompositions of such representations into irreducibles stabilize starting from a prescribed dimension. 
Subsequently, a large literature has emerged studying representations of various categories, in particular relating the combinatorics of the morphisms in the category with properties of its representations~\cite{WILSON2014269,FI_hom,sam_snowden_2015,sam2017grobner,GADISH2017450}.
We use the language and results of the representation stability literature to formalize sequences of symmetric POPs related across dimensions.
Further, we extend this body of work by showing that representations of the (opposite of the) category $\FS$ of finite sets with all maps between them underlie de Finetti's theorem and our generalizations of this theorem. 
Specifically, our extension of the notion of sampling with replacement appearing in de Finetti's theorem consists of applying a uniformly random morphism from the $\coFS$ and $\FS$ categories (see Section~\ref{sec:deFinetti}).  We then prove de Finetti-type theorems for representations of $\coFS$ and $\FS$ in Theorems~\ref{thm:DeFin_for_coFS} and~\ref{thm:dual_deFin_general}, respectively. 
Thus, our work represents a new point of contact between representation stability and probability via random actions of a category on its representations.


\paragraph{Exchangeable Arrays.}
Since de Finetti's original result~\cite{de1929funzione} on binary infinite exchangeable arrays, a number of extensions have been obtained pertaining both to the types of random variables involved and the sense in which they are exchangeable. The former includes extensions to random variables taking real values~\cite{de1937prevision,dynkin1953classes} and values in a compact Hausdorff space~\cite{hewitt_savage}.
The latter includes extensions to finite exchangeable arrays~\cite{diaconis_freedman}, higher-dimensional exchangeable arrays and partially-exchangeable arrays~\cite{hoover1979relations,aldous1981representations,kallenberg2005probabilistic}.
In a separate line of work~\cite{convergent_seqs1}, the convergence of sequences of dense graphs to so-called graphons (graph functions) is studied.  These were then shown to characterize consistent graph models~\cite{lovasz2012random}, which are sequences of measures on simple graphs projecting onto each other similarly to~\eqref{eq:limiting_lower_bd}, and their characterization in terms of graphons was shown to be a special case of Aldous--Hoover theory in~\cite{diaconis2007graph}.
In this paper, we generalize de Finetti's theorem in a different---and to our knowledge as yet unexplored---direction that is motivated by our optimization applications. 
Specifically, we view the preceding results as pertaining to sequences of exchangeable measures that project onto each other in specific ways, similarly to~\eqref{eq:limiting_lower_bd}.  We then study sequences of measures projecting onto each other in more general ways, and we generalize the results in~\cite{diaconis_freedman} to give dense subsets of such sequences as well as associated nonasymptotic error bounds.

\paragraph{Graphons and Flag Algebras.} 
The limiting optimal value $u_{\infty}$ for minimizing homomorphism densities in Example~\ref{ex:graph_densities} can be reformulated as an optimization problem over graphons, which are symmetric measurable functions $W\colon[0,1]^2\to[0,1]$, see~\cite[\S16]{lovasz2012large}. 
From this perspective, one can obtain $u_n-u_{\infty}=O(1/n)$ using~\cite[Lemma~2.4(b)]{LOVASZ2006933}.
Finally, taking SOS relaxations of $u_n$ and taking their limits as $n\to\infty$ yields precisely the lower bounds on $u_{\infty}$ that can be proved using flag algebras~\cite{raymond2018symmetric,raymond2018symmetry,brosch_thesis}.  Relative to this previous work, our framework gives several new results and suggests avenues for future work.  To the best of our knowledge, proving inequalities in graph densities by minimizing the corresponding expression in injective densities has not explicitly appeared in the literature before. 
These lower bounds are generally better than the ones obtained by computing $u_n$ and subtracting $O(1/n)$, as explained in Section~\ref{sec:free_poly_opts} and illustrated in Example~\ref{ex:goodman}. Further, they open the door to non-SOS-based proofs of graph density inequalities, since any relaxation applied to $\ell_n$ yields a lower bound on $u_{\infty}$. 
Finally, our bounds apply not just for simple graphs but also for weighted ones. 

\subsection*{Acknowledgments} 
We thank Emma Church, Jordan Ellenberg, and Graham White for suggesting the idea of using representations of FI and related categories to study de Finetti theorems. We also thank Jordan Ellenberg for helpful and productive discussions and for his feedback on our generalized de Finetti theorems. 
We thank Jorge Garza Vargas for the idea of using a Wasserstein distance in Lemma~\ref{lem:W1_dist}.
We thank the anonymous reviewers for their helpful comments and suggestions.
Some ideas in the proofs of~\eqref{eq:rate_FS} and Propositions~\ref{prop:free_sym_and_free_descr_isom}-\ref{prop:calc_for_stability} were suggested by the ChatGPT 5.6 Pro model.
The authors were partially supported by AFOSR grants FA9550-23-1-0070 and FA9550-23-1-0204. 
This work was conducted while EL was at the Department of Computing and Mathematical Sciences at Caltech.

\subsection*{Notation}
We take the set of natural numbers to be $\NN=\{0,1,2,\ldots\}$. We write $n|N$ for $n,N\in\NN$ to denote divisibility of $N$ by $n$, meaning that there is $k\in\NN$ satisfying $N=nk$. We denote by $\msf S_n$ the group of permutations on $n$ letters. If $\alpha\in\NN^d$ is a multi-index, we denote $|\alpha|=\sum_{i=1}^d\alpha_i$ and $\alpha!=\prod_{i=1}^d\alpha_i!$. We denote by $\preceq$ a partial order. 
Finite-dimensional real vector spaces are denoted by blackboard bold letters such as $\vct V,\vct U$, their product by $\vct V\times\vct U$, and their tensor product by $\vct V\otimes\vct U$. The $d$'th symmetric power of $\vct V$ is denoted by $\mathrm{Sym}^d\vct V$.
The space of symmetric $n\times n$ matrices is denoted by $\mbb S^n=\mathrm{Sym}^2\RR^n$, and the set of symmetric matrices with entries in a set $\Omega\subseteq\RR$ is denoted by $\mbb S^n(\Omega)$. 
The ring of polynomials over such a vector space $\vct V$ is denoted by $\RR[\vct V]$, with $\RR[\vct V]_{\leq d}$ and $\RR[\vct V]_d$ denoting the spaces of polynomials of degree at most $d$ and homogeneous polynomials of degree exactly $d$. 
All vector spaces in this paper are endowed with bases and inner products, and if $A\colon \vct V\to\vct U$ is a linear map between two vector spaces, we denote by $A^\star \colon\vct U\to\vct V$ the adjoint of $A$ with respect to the inner products on $\vct V,\vct U$.

The basis and inner product on $\RR^n$ are always the standard ones, namely $(e_i)_{i\in[n]}$ and $\langle x,y\rangle=x^\top y$. 
If $(e_{\alpha})_{\alpha\in\mc A}$ and $(e_{\beta})_{\beta\in \mc B}$ are orthogonal bases for inner product spaces $\vct V$ and $\vct U$, respectively, we endow $\vct V\times \vct U$ with the basis $\Big((e_{\alpha},0)\Big)_{\alpha\in\mc A}\cup\Big((0,e_{\beta})\Big)_{\beta\in\mc B}$ and inner product $\langle (v_1,u_1),(v_2,u_2)\rangle_{\vct V\times\vct U}=\langle v_1,v_2\rangle_{\vct V} + \langle u_1,u_2\rangle_{\vct U}$. We endow $\vct V\otimes\vct U$ with the basis $(e_{\alpha}\otimes e_{\beta})_{\alpha\in\mc A,\beta\in\mc B}$ and inner product $\langle v_1\otimes u_1,v_2\otimes u_2\rangle_{\vct V\otimes\vct U} = \langle v_1,v_2\rangle_{\vct V}\langle u_1,u_2\rangle_{\vct U}$, which is extended by linearity as usual. We identify $\mathrm{Sym}^d\vct V$ with the subspace of symmetric tensors in $\vct V^{\otimes d}$, endow it with the basis of orbit sums $(E_{[\alpha_1,\ldots,\alpha_d]}=\sum_{(\alpha_1',\ldots,\alpha_d')\in\msf S_d(\alpha_1,\ldots,\alpha_d)}e_{\alpha_1'}\otimes\cdots\otimes e_{\alpha_d'})_{[\alpha_1,\ldots,\alpha_d]\in \mc A^d/\msf S_d}$ indexed by multisets of size $d$, and with the inner product that makes $(E_{[\alpha_1,\ldots,\alpha_d]}/\sqrt{\prod_{i=1}^d\langle e_{\alpha_i},e_{\alpha_i}\rangle})$ orthonormal. Note that this is not the inner product induced from $\vct V^{\otimes d}$. For example, the basis we give $\mathrm{Sym}^2\RR^n=\mbb S^n$ is $E_{[1,1]}=e_1e_1^\top$, $E_{[1,2]} = e_1e_2^\top + e_2e_1^\top$, and $E_{2,2}=e_2e_2^\top$, and this basis is orthonormal. 
Finally, we endow $\RR[\vct V]_d$ with the basis of monomials $(m_{\mu}(x)=\prod_{\alpha\in\mc A}\langle x,e_{\alpha}\rangle^{\mu_{\alpha}})_{\mu\in\NN^{\mc A}, |\mu| = d}$ and with the Bombieri inner product that makes $(m_{\mu}/\sqrt{\frac{\mu!}{d!}\prod_{\alpha\in\mc A}\langle e_{\alpha},e_{\alpha}\rangle^{\mu_{\alpha}}})$ orthonormal. Sending a polynomial $p\in\RR[\vct V]_d$ to the unique symmetric tensor $P\in\mathrm{Sym}^d\vct V$ satisfying $p(x)=\langle P, x^{\otimes d}\rangle_{\vct V^{\otimes d}}$ defines an isomorphism between $\RR[\vct V]_d$ and $\mathrm{Sym}^d\vct V$, and the Bombieri inner product coincides with the restriction of the Frobenius inner product on $\vct V^{\otimes d}$ to the image of this isomorphism.
Random variables are denoted by capital letters. The distribution of a random variable $X$ is denoted $\mathrm{Law}(X)$. If $\mu$ is a probability measure, we write $X\sim\mu$ when $\mathrm{Law}(X)=\mu$. If $X$ and $Y$ are random variables, we write $X\overset{d}{=}Y$ to mean $\mathrm{Law}(X)=\mathrm{Law}(Y)$. Expectations are denoted by $\mbb E$.

\section{Background on Representation Stability}\label{sec:background}
In this section, we discuss the relevant concepts from the representation stability literature~\cite{CHURCH2013250,FImods,levin2023free}.  We begin by briefly reviewing linear representations of groups; more comprehensive introductions are available in~\cite{fulton2013representation,serre1977linear}.  A (linear) representation of a group $\msf G$ on a vector space $\vct V$ is a group homomorphism $\rho$ mapping elements of $\msf G$ to linear maps from $\vct V$ to itself.  A group element $g\in \msf G$ then acts on vectors $x\in\vct V$ via $g\cdot x\coloneqq \rho(g)x$. It is customary to omit the homomorphism $\rho$ since it is usually clear from context and to simply write $gx$ for the action of $g\cdot x$.  A vector $x\in\vct V$ is $\msf G$-invariant if $gx=x$ for all $g\in\msf G$. The collection of invariants in $\vct V$ forms a subspace denoted by $\vct V^{\msf G}$. 
In this paper, all groups are symmetric groups, that is, $\msf G=\msf S_n$ for some $n$. In this case, we can endow $\vct V$ with a $\msf G$-invariant inner product, so we shall always assume that our inner products are group-invariant (or equivalently, $\rho(\msf G)$ consists of orthogonal matrices). 
With respect to such an inner product, the orthogonal projection onto $\vct V^{\msf G}$ is given by the \emph{Reynolds operator} $\mathrm{sym}_{\msf G} = \frac{1}{|\msf G|}\sum_{g\in \msf G}g = \mbb EG$ where $G$ is a uniformly random element of $\msf G$. 
If $\vct V$ and $\vct U$ are representations of $\msf G$, then a linear map $A\colon\vct V\to\vct U$ is called \emph{$\msf G$-equivariant} if $A\circ g = g\circ A$ for all $g\in\msf G$, i.e., if $A$ commutes with the action of the group $\msf G$.

\subsection{Consistent Sequences}\label{sec:consist_seqs}


Representation stability studies nested sequences of representations called consistent sequences.  These sequences are indexed by a directed poset (one in which every two elements have a common upper bound) that specifies how these representations are nested.  For this paper, it suffices to consider the natural numbers $\NN$ with one of two partial orders $\preceq$, the standard total order denoted by $(\NN, \leq)$, and the divisibility partial order denoted by $(\NN, \cdot | \cdot)$.  In the first case $n \preceq N$ corresponds to $n \leq N$, while in the second case $n \preceq N$ corresponds to $N=nk$ for some $k\in\NN$.
\begin{definition}[Consistent sequence]
    A \emph{consistent sequence} is a collection $\mscr V = \{\mc N, \allowbreak (\msf S_n)_{n\in\mc N}, \allowbreak (\vct V_n)_{n\in\mc N}, \allowbreak (\varphi_{N,n})_{n\preceq N}\}$ composed of: 
    \begin{enumerate}[font=\emph, align=left]
        \item[(Poset)] a directed poset $\mathcal{N}$;
        \item[(Groups)] nested symmetric groups $\msf S_n\subseteq\msf S_N$ whenever $n \preceq N$;\footnote{Consistent sequences have been defined for more general sequences of groups~\cite{CHURCH2013250}, but only symmetric groups are used in this paper.}
        \item[(Vector spaces)] a sequence of vector spaces $(\vct V_n)_{n\in\mc N}$ such that $\vct V_n$ is an $\msf S_n$-representation; 
        \item[(Embeddings)] linear injective maps $(\varphi_{N,n}\colon \vct V_n\to \vct V_N)_{n \preceq N}$ such that $\varphi_{N,n}$ is $\msf S_n$-equivariant, such that $\varphi_{N,n}\circ\varphi_{n,k}=\varphi_{N,k}$ whenever $N\succeq n\succeq k$, and such that $\varphi_{n,n}=\mathrm{id}_{\vct V_n}$. 
    \end{enumerate}
\end{definition}

Throughout this paper, we focus on the following two consistent sequences arising from the sequence of permutation groups $\msf S_n$ acting on the sequence of vector spaces $\RR^n$ by permuting coordinates, namely, by $(\sigma\cdot x)_i=x_{\sigma^{-1}(i)}$.  They are both indexed by $\NN$ but with different partial orders, and they involve different embeddings between dimensions.
    \begin{enumerate}[font=\textbf, align=left]
        \item[(Zero-padding)] Let 
        \begin{equation} \label{eq:zero-padding}
            \mscr Z = \{(\NN,\leq), (\msf S_n)_{\mscr Z}, (\RR^n), (\zeta_{N,n})_{n\leq N}\},    
        \end{equation}
        be the consistent sequence in which $\msf S_n\subseteq \msf S_N$ by identifying $g\in\msf S_n$ with $\mathrm{blkdiag}(g, I_{N-n})\in\msf S_N$ and with embeddings by zero-padding $\zeta_{N,n}(x) = (x, 0_{N-n})$.
        \item[(Duplication)] Let  
        \begin{equation} \label{eq:duplicating}
            \mscr D = \{(\NN,\cdot\mid\cdot), (\msf S_n)_{\mscr D}, (\RR^n), (\delta_{N,n})_{n|N}\},
        \end{equation}
        be the consistent sequence in which $\msf S_n\subseteq \msf S_N$ by identifying $g\in\msf S_n$ with $g\otimes I_{N/n}\in\msf S_N$ and with embeddings by duplicating entries $\delta_{N,n}(x) = x\otimes\mathbbm{1}_{N/n}=(x_1,\ldots,x_1,\ldots,\allowbreak \underbrace{x_n,\ldots,x_n}_{N/n \textrm{ times}})$.
    \end{enumerate} 

We can construct new consistent sequences from existing ones such as $\mscr Z$ and $\mscr D$ using standard operations on vector spaces.\footnote{Formally, these `constructions' are (bi)functors from the category of vector spaces to itself.}  Moreover, it is possible to quantify the complexity of the new consistent sequence by keeping track of the constructions applied.  Specifically, if $\mscr W = \{\mc N, \allowbreak (\msf S_n)_{n\in\mc N}, \allowbreak (\vct W_n)_{n\in\mc N}, \allowbreak (\varphi_{N,n})_{n\preceq N}\}$, $\mscr U = \{\mc N, \allowbreak (\msf S_n)_{n\in\mc N}, \allowbreak (\vct U_n)_{n\in\mc N}, \allowbreak (\psi_{N,n})_{n\preceq N}\}$ are two consistent sequences indexed by the same poset and with the same nested sequence of symmetric groups acting on them, we can construct the following new consistent sequences:
\begin{enumerate}[align=left, font=\emph]
    \item[(Products)] The \emph{(Cartesian) product} of $\mscr W$ and $\mscr U$ is $\mscr W\times\mscr U = \{\mc N, (\msf S_n)_{n\in\mc N}, (\vct W_n\times \vct U_n)_{n\in\mc N}, (\varphi_{N,n}\times\psi_{N,n})_{n\preceq N}\}$. We denote by $\mscr W^d$ the product of $d$ copies of $\mscr W$.
    
    \item[(Tensors)] The \emph{tensor product} of $\mscr W$ and $\mscr U$ is $\mscr W\otimes\mscr U = \{\mc N, (\msf S_n)_{n\in\mc N}, (\vct W_n\otimes\vct U_n)_{n\in\mc N},\allowbreak (\varphi_{N,n}\otimes\psi_{N,n})_{n\preceq N}\}$.  The $k$'th tensor power $\mscr W^{\otimes k}$ of $\mscr W$ is the tensor product of $\mscr W$ with itself $k$ times, and the $k$th symmetric power is $\mathrm{Sym}^k\mscr W = \{\mc N,(\msf S_n)_{n\in\mc N},(\mathrm{Sym}^k\vct W_n)_{n\in\mc N}, (\varphi_{N,n}^{\otimes k})_{n\preceq N}\}$, where we view $\mathrm{Sym}^k\vct W_n\subseteq \vct W_n^{\otimes k}$ and restrict $\varphi_{N,n}^{\otimes k}$ to that subspace.
    
    \item[(Polynomials)] The set of \emph{degree-$k$ polynomials over $\mscr W$} is $\RR[\mscr W]_k = \{\mc N,(\msf S_n)_{n\in\mc N},\allowbreak (\RR[\vct W_n]_k)_{n\in\mc N},\allowbreak (\mc P_k(\varphi_{N,n}))_{n\preceq N}\}$, where $\mc P_k(\varphi_{N,n})p = p\circ \varphi_{N,n}^\star$ for $p\in \RR[\vct W_n]_k$. The set of polynomials of degree at most $k$ over $\mscr W$ is denoted $\RR[\mscr W]_{\leq k} = \bigoplus_{j=0}^k\RR[\mscr W]_j$ where $\RR[\mscr W]_0$ is the trivial consistent sequence with all vector spaces equal to $\RR$ and all actions being trivial.


\end{enumerate}
The actions of $\msf S_n$ on $\vct W_n$ and on $\vct U_n$ naturally extend to all of the above vector spaces. For instance, if $g\in\msf S_n$ we have $g\cdot(w,u)=(g\cdot w,g\cdot u)$ for $(w,u)\in\vct W_n\times\vct U_n$ and $g\cdot\sum_iw_i\otimes u_i=\sum_i(gw_i)\otimes(gu_i)$ for $\sum_iw_i\otimes u_i\in\vct W_n\otimes\vct U_n$.

If we start from a consistent sequence $\mscr V$ and apply some sequence of the above operations to it to obtain another sequence $\mscr U$, then we say that $\mscr U$ is a $\mscr V$-sequence. 
We denote a $\mscr V$-sequence by $\mscr U=\mc F(\mscr V)$ where $\mc F$ is the sequence of operations applied to $\mscr V$ to produce $\mscr U$.
We furthermore quantify the complexity of the operations $\mc F$ applied to $\mscr V$ using its \emph{degree}, defined inductively as follows.
We set the degree of the identity construction that doesn't change its input to $\deg \mathrm{id}=1$, and inductively set
\begin{equation*}\begin{aligned}
    &\deg \mc F_1\times \mc F_2 = \max\{\deg \mc F_1, \deg \mc F_2\}; && \deg \mc F_1 \otimes \mc F_2 = \deg \mc F_1 + \deg \mc F_2;\\ &\deg \mathrm{Sym}^k\mc F_1 = \deg \RR[\mc F_1]_k = k \deg \mc F_1.
\end{aligned}\end{equation*}
If $\mscr U$ is a $\mscr V$-sequence, we then say that $\mscr U$ has degree $d$ over $\mscr V$ if there is a construction $\mc F$ of degree $d$ satisfying $\mscr U=\mc F(\mscr V)$, in which case we write $\deg_{\mscr V}\mscr U=\deg\mc F$. We omit the subscript $\mscr V$ when it is clear from context.
Note that $\mscr Z$ and $\mscr D$ are indexed by different posets, so a given consistent sequence cannot be both a $\mscr Z$-sequence and a $\mscr D$-sequence.

\begin{example}\label{ex:Rdk}
    The consistent sequences $\mathrm{Sym}^k\mscr Z$ and $\mathrm{Sym}^k\mscr D$ both consist of the sequence of vector spaces $\vct V_n=\mathrm{Sym}^k\RR^n$, and they are degree-$k$ $\mscr Z$- and $\mscr D$-sequences, respectively.  Viewing an element $X\in\vct V_n$ as an order-$k$ symmetric tensor in $n$ dimensions, permutations act by $(\sigma\cdot X)_{i_1,\ldots,i_k}=X_{\sigma^{-1}(i_1),\ldots,\sigma^{-1}(i_k)}$, and this action is common to both consistent sequences.
    %
    The embeddings between dimensions are different, however, and given by
    $\zeta_{N,n}^{\otimes k}$ and $\delta_{N,n}^{\otimes k}$. Explicitly, these correspond to zero-padding and duplicating the entries of $X$ as follows:
    \begin{equation*}\begin{aligned}
        &(\zeta_{N,n}^{\otimes k}X)_{i_1,\ldots,i_k} = \begin{cases} X_{i_1,\ldots,i_k} &\textrm{if } i_j\in[n] \textrm{ for all } j\in[k]\\ 0 & \textrm{if } i_j\in[N]\setminus[n] \textrm{ for some } j\in[k]\end{cases}\\
        &(\delta_{N,n}^{\otimes k}X)_{(i_1-1)N/n+j_1,\ldots,(i_k-1)N/n + j_k} = X_{i_1,\ldots,i_k},\quad \textrm{for } i_{\ell}\in[n] \textrm{ and } j_{\ell}\in[N/n].
    \end{aligned}\end{equation*} 
    Here $n\leq N$ for the first line and $n|N$ for the second.  


\end{example}

\subsection{Free Invariants and Their Stabilization}

The sequences of polynomials that arise in the any-dimensional POPs in Examples~\ref{ex:power_means}-\ref{ex:graph_densities} and in the lower bounds \eqref{eq:lower_bound_pwr_means} in Section~\ref{sec:intro} are invariant polynomials defined over a consistent sequence, related to each other across dimensions in particular ways.  In this subsection, we show that sequences of such polynomials of bounded degree form a finite-dimensional vector space as a consequence of representation stability~\cite{CHURCH2013250}.  This fact allows us to describe any-dimensional POPs by finitely many parameters in Section~\ref{sec:any_dim_poly_probs}, and it plays an important role in our construction of lower bounds in Section~\ref{sec:free_poly_opts}.

Invariants in a consistent sequence $\mscr V=\{\mc N,(\msf S_n),(\vct V_n),(\varphi_{N,n})\}$ are related to each other across dimensions in two ways. First, projections of high-dimensional invariants yield low-dimensional ones, that is, for any $n \preceq N$, the orthogonal projection $\varphi_{N,n}^\star  \colon \vct V_N \rightarrow \vct V_n$ applied to a high-dimensional invariant $v_N \in \vct V_N^{\msf S_N}$ yields a low-dimensional invariant $\varphi_{N,n}^\star  v_N \in \vct V_n^{\msf S_n}$.  
Second, embedding low-dimensional invariants and symmetrizing them yields high-dimensional invariants, that is, if $v_n\in \vct V_n^{\msf S_n}$ then $\mathrm{sym}_{\msf S_N}\varphi_{N,n} v_n = \mbb E\Pi_N\varphi_{N,n}v_n\in \vct V_N^{\msf S_N}$ whenever $n \preceq N$ and $\Pi_N\in\msf S_N$ is a uniformly random permutation.  
In what follows, we abbreviate $\mathrm{sym}_{\msf S_N}\varphi_{N,n}$ for any $n\preceq N$ by $\mathrm{sym}_N^{\mscr V}$.
\begin{definition}\label{def:free_elements}
    Let $\mscr V$ be a consistent sequence.
    \begin{enumerate}[font=\emph, align=left]
        \item[(Freely-described)]  The space of \emph{freely-described elements} is the space of sequences
        \begin{equation}
            \varprojlim\vct V_n^{\msf S_n} = \left\{(v_n)\in\prod_{n\in\mc N}\vct V_n^{\msf S_n}: \varphi_{N,n}^\star v_N = v_n \textrm{ for all } n\preceq N\right\}.
        \end{equation}
        In words, a freely-described element is a sequence of invariants projecting onto each other. 

        \item[(Freely-symmetrized)] The space of \emph{freely-symmetrized elements} is the space of equivalence classes
        \begin{equation}
            \varinjlim\vct V_n^{\msf S_n} = \bigsqcup_{n\in\mc N}\vct V_n^{\msf S_n}/\sim\quad \textrm{where } v_n\sim v_N \textrm{ if } v_N=\mathrm{sym}_N^{\mscr V}v_n,
        \end{equation}
        where $\bigsqcup$ denotes the disjoint union. 
        A freely-symmetrized element is one of these equivalence classes, which we write as a sequence\footnote{Two such sequences are equivalent if they agree for all $n\succeq N$ for some fixed $N$.} $(\mathrm{sym}_n^{\mscr V}v_k)_{n\succeq k}$ when $v_k\in\vct V_k^{\msf S_k}$.
    \end{enumerate}
\end{definition}
The limits here correspond to inverse and direct limits from category theory. Informally, the directions of the arrows denote the `direction' in which a sequence of invariants must satisfy a relation---the left and right arrows correspond to mapping higher-dimensional invariants to lower-dimensional ones and vice-versa, respectively.

\begin{example}
The spaces of freely-described and freely-symmetrized elements in $\mscr Z$ are both one-dimensional and spanned by $(\mathbbm{1}_n)$ and $(\frac{1}{n}\mathbbm{1}_n)$, respectively.

For $\RR[\mscr Z]_{\leq 2}$, which consists of polynomials $\RR[x_1,\ldots,x_n]_{\leq 2}$ in $n$ variables of degree at most 2, we have:
\begin{equation*}
\begin{aligned}
    &\varprojlim\RR[x_1,\ldots,x_n]_{\leq 2}^{\msf S_n} = \mathrm{span}\left\{(1),\left(\sum_{i=1}^nx_i\right), \left(\sum_{i=1}^nx_i^2\right), \left(\sum_{1\leq i<j\leq n}x_ix_j\right)\right\},\\
    &\varinjlim\vct \RR[x_1,\ldots,x_n]_{\leq 2}^{\msf S_n} = \mathrm{span}\left\{(1), \left(\frac{1}{n}\sum_{i=1}^nx_i\right), \left(\frac{1}{n}\sum_{i=1}^nx_i^2\right),\left(\frac{1}{\binom{n}{2}}\sum_{1\leq i<j\leq n}x_ix_j\right)\right\}.
\end{aligned}
\end{equation*}
Here we have used the fact that if $p\in \RR[\vct V_N]_{\leq k}$ then $\mc P_{\leq k}(\varphi_{N,n})^\star p=p\circ\varphi_{N,n}$ with our choice of inner product on polynomials.
\end{example}
As we alluded to previously, in all of these examples the spaces of freely-described and freely-symmetrized elements have the same finite dimension, and moreover the difference between the basis elements for both is in how they are ``averaged'' across dimensions.  Remarkably, this is not a coincidence but a consequence of a general phenomenon whereby the projections $\varphi_{N,n}^\star $ and embeddings $\mathrm{sym}_N^{\mscr V}$ become isomorphisms between invariants.  To explain this phenomenon, we begin by observing that the space of freely-described elements can be identified with the dual space of freely-symmetrized elements.
\begin{proposition}\label{prop:free-duality}
    Let $\mscr V$ be a consistent sequence.  We have $(\varinjlim\vct V_n^{\msf S_n})^\star=\varprojlim\vct V_n^{\msf S_n}$, under the duality pairing $\langle (\mathrm{sym}_n^{\mscr V}v_k),(u_n)\rangle=\langle v_k,u_k\rangle$ for any $v\in\vct V_k^{\msf S_k}$ and $(u_n)\in\varprojlim\vct V_n^{\msf S_n}$.  Here $\vct V^\star$ denotes the (algebraic) dual space of $\vct V$ consisting of all linear functionals on $\vct V$.
\end{proposition}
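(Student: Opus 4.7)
The plan is to exhibit a linear map $\Phi \colon \varprojlim \vct V_n^{\msf S_n} \to (\varinjlim \vct V_n^{\msf S_n})^\star$ given by the stated pairing, and then check it is well-defined, injective, and surjective. The two main ingredients are the $\msf S_n$-invariance of the inner products on each $\vct V_n$ and the $\msf S_n$-equivariance of the embeddings $\varphi_{N,n}$ (which together imply $\varphi_{N,n}^\star$ is $\msf S_n$-equivariant and hence carries $\vct V_N^{\msf S_N}$ into $\vct V_n^{\msf S_n}$).

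First I would verify well-definedness: if a freely-symmetrized element has two representatives $v_k \in \vct V_k^{\msf S_k}$ and $v_N = \mathrm{sym}_N^{\mscr V} v_k \in \vct V_N^{\msf S_N}$ with $k \preceq N$, I need $\langle v_k, u_k\rangle = \langle v_N, u_N\rangle$ for every freely-described $(u_n)$. This is a short chain of identities:
\begin{equation*}
\langle v_N, u_N\rangle = \langle \mathrm{sym}_{\msf S_N} \varphi_{N,k} v_k,\, u_N\rangle = \langle \varphi_{N,k} v_k,\, u_N\rangle = \langle v_k,\, \varphi_{N,k}^\star u_N\rangle = \langle v_k,\, u_k\rangle,
\end{equation*}
where the second equality uses $\msf S_N$-invariance of $u_N$ and of the inner product (so Reynolds is self-adjoint and acts as the identity on $u_N$), the third uses that $\varphi_{N,k}^\star$ is the adjoint of $\varphi_{N,k}$, and the last uses the freely-described condition. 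The general case follows since any two equivalent representatives are linked through a common upper bound in $\mc N$.

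Next I would prove injectivity of $\Phi$: if $(u_n)$ pairs to zero against every freely-symmetrized element, then in particular $\langle u_k, u_k\rangle = 0$ for every $k$ (take the class represented by $v_k = u_k$), so $u_k = 0$. For surjectivity, given $\phi \in (\varinjlim \vct V_n^{\msf S_n})^\star$, restrict to each $\vct V_n^{\msf S_n}$ via the canonical map to the colimit to obtain $\phi_n \in (\vct V_n^{\msf S_n})^\star$, and let $u_n \in \vct V_n^{\msf S_n}$ be its Riesz representative. I then need to check $\varphi_{N,n}^\star u_N = u_n$ for all $n \preceq N$. For any $v_n \in \vct V_n^{\msf S_n}$, the relation $[v_n] = [\mathrm{sym}_N^{\mscr V} v_n]$ in the colimit gives
\begin{equation*}
\langle v_n, u_n\rangle = \phi_n(v_n) = \phi_N(\mathrm{sym}_N^{\mscr V} v_n) = \langle \mathrm{sym}_{\msf S_N} \varphi_{N,n} v_n,\, u_N\rangle = \langle v_n,\, \varphi_{N,n}^\star u_N\rangle,
\end{equation*}
by the same computation as above. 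Since both $u_n$ and $\varphi_{N,n}^\star u_N$ lie in $\vct V_n^{\msf S_n}$ (the latter by equivariance of $\varphi_{N,n}^\star$) and agree against every $v_n$ in that subspace, they are equal.

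The one place that requires the most care is ensuring $\varphi_{N,n}^\star$ sends $\vct V_N^{\msf S_N}$ into $\vct V_n^{\msf S_n}$, since the surjectivity argument needs to exhibit $\varphi_{N,n}^\star u_N$ as an element of $\vct V_n^{\msf S_n}$ before using nondegeneracy of the inner product on that finite-dimensional subspace; this, however, is immediate from $\msf S_n \subseteq \msf S_N$ acting compatibly with $\varphi_{N,n}$. Everything else is a direct calculation using only the definitional properties of a consistent sequence.
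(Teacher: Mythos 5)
Your proof is correct and takes essentially the same route as the paper: well-definedness via the adjoint computation on a common upper bound, injectivity by pairing $(u_n)$ against the class represented by $u_k$ itself, and surjectivity by taking Riesz representatives of the restrictions of the functional and checking they project onto one another. The only differences are presentational — you spell out the self-adjointness of the Reynolds operator and the need to link equivalent representatives through a common upper bound, both of which the paper leaves implicit.
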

\begin{proof}
    The given pairing is well-defined since if $v_N=\mathrm{sym}_N^{\mscr V}v_n$ then
    \begin{equation*}
        \langle v_N,u_N\rangle = \langle \mathrm{sym}_N\varphi_{N,n}v_n,u_N\rangle = \langle v_n,\varphi_{N,n}^\star u_N\rangle = \langle v_n,u_n\rangle. 
    \end{equation*}
    Furthermore, distinct freely-described elements define distinct linear functionals via this pairing because if $\langle (\mathrm{sym}_n^{\mscr V}v_k),(u_n)\rangle=0$ for all $v_k\in\vct V_k^{\msf S_k}$ and all $k$, then in particular $\langle(\mathrm{sym}_n^{\mscr V}u_m),(u_n)\rangle=\|u_m\|^2=0$ so $u_m=0$ for all $m$. Thus, it suffices to show that every linear functional on $\varinjlim\vct V_n^{\msf S_n}$ comes from a freely-described element. 
 Suppose $\ell\colon\varinjlim\vct V_n^{\msf S_n}\to\RR$ is a linear functional. Define the sequence of linear functionals $\ell_n\colon \vct V_n^{\msf S_n}\to\RR$ by $\ell_n=\ell|_{\vct V_n^{\msf S_n}}$, and let $u_n\in\vct V_n^{\msf S_n}$ satisfy $\ell_n(\cdot)=\langle \cdot,u_n\rangle$. Note that for any $n\preceq N$,
    \begin{equation*}
        \langle \varphi_{N,n}^\star u_N,v_n\rangle = \langle u_N, \mathrm{sym}_N\varphi_{N,n}v_n\rangle = \ell(\mathrm{sym}_N^{\mscr V}v_n)=\ell(v_n) = \langle u_n,v_n\rangle,
    \end{equation*}
    for all $v_n\in\vct V_n^{\msf S_n}$, so we conclude that $\varphi_{N,n}^\star u_N=u_n$ for all $n\preceq N$. Thus, $(u_n)$ is a freely-described element representing the linear functional $\ell$ under the above duality pairing.
\end{proof}
Based on this result, the space of freely-described elements is finite-dimensional if and only if the space of freely-symmetrized elements is, and in this case the two spaces have the same dimension.  The next proposition shows this finite dimensionality for $\mscr Z$- and $\mscr D$-sequences.
\begin{proposition}[{Finite-dimensional invariants}]\label{prop:finite_dim_free_invariants}
    If $\mscr V$ is a $\mscr Z$- or a $\mscr D$-consistent sequence of degree $d$, then $\dim\varprojlim\vct V_n^{\msf S_n}=\dim\varinjlim\vct V_n^{\msf S_n}\leq \dim\vct V_d^{\msf S_d}$. Equality holds when $\mscr V$ is a $\mscr Z$-sequence.
\end{proposition}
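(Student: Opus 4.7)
The plan addresses the three assertions in turn.

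First, the equality $\dim\varprojlim\vct V_n^{\msf S_n}=\dim\varinjlim\vct V_n^{\msf S_n}$ is immediate from Proposition~\ref{prop:free-duality}, which identifies $\varprojlim\vct V_n^{\msf S_n}$ as the algebraic dual of $\varinjlim\vct V_n^{\msf S_n}$. Hence, once we show that one of these is finite-dimensional, the other is too, with equal dimension.

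Second, to prove $\dim\varinjlim\vct V_n^{\msf S_n}\leq\dim\vct V_d^{\msf S_d}$, the plan is to show that the natural map $\psi_d\colon\vct V_d^{\msf S_d}\to\varinjlim\vct V_n^{\msf S_n}$, $v\mapsto[v]$, is surjective. A class in $\varinjlim$ is represented by some $v_k\in\vct V_k^{\msf S_k}$, and along the equivalence relation we may push $[v_k]=[\mathrm{sym}_n^{\mscr V}v_k]$ up to any $n\succeq k$. Choosing $n\succeq d,k$ reduces surjectivity of $\psi_d$ to surjectivity of $\iota_n\coloneqq\mathrm{sym}_n\varphi_{n,d}\colon\vct V_d^{\msf S_d}\to\vct V_n^{\msf S_n}$, since then any $\mathrm{sym}_n^{\mscr V}v_k$ equals $\iota_n v_d$ for some $v_d\in\vct V_d^{\msf S_d}$ and $[v_k]=[v_d]$. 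The key computational identity supporting this reduction, valid whenever $k\preceq d$, is
\begin{equation*}
\mathrm{sym}_n\varphi_{n,d}\mathrm{sym}_d\varphi_{d,k}v_k = \mathrm{sym}_n\mathrm{sym}_{\msf S_d}\varphi_{n,d}\varphi_{d,k}v_k = \mathrm{sym}_n\varphi_{n,k}v_k,
\end{equation*}
where we use $\msf S_d\subset\msf S_n$ to absorb the inner symmetrization (so $\mathrm{sym}_n\mathrm{sym}_{\msf S_d}=\mathrm{sym}_n$) together with transitivity $\varphi_{n,d}\varphi_{d,k}=\varphi_{n,k}$; in particular, $[\mathrm{sym}_d^{\mscr V}v_k]=[v_k]$ whenever $k\preceq d$. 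Surjectivity of $\iota_n$ for some $n\succeq d$ is established by structural induction on the construction $\mc F$ with $\mscr V=\mc F(\mscr Z)$ or $\mscr V=\mc F(\mscr D)$. The base cases ($\mscr V=\mscr Z$ or $\mscr D$, for which invariants are one-dimensional) and the product step (for which invariants split componentwise and the inductive hypothesis applies to each factor) are straightforward. The tensor product and symmetric power inductive steps are the main technical obstacle: here we must show that any $\msf S_n$-invariant of $\vct W_n\otimes\vct U_n$ (respectively $\mathrm{Sym}^k\vct W_n$) is, in sufficiently high dimension $n$, an $\msf S_n$-symmetrization of a tensor (respectively symmetric power) of lower-dimensional invariants from the factors, which we achieve by ``spreading'' the factors across disjoint coordinate blocks of $[n]$.

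Third, for the equality $\dim\varinjlim\vct V_n^{\msf S_n}=\dim\vct V_d^{\msf S_d}$ in the $\mscr Z$-case, the plan is to additionally show that $\psi_d$ is injective, equivalently that $\iota_n$ is injective on $\vct V_d^{\msf S_d}$ for some $n\geq d$. This is verified by computing the composition $\zeta_{n,d}^\star\iota_n=\zeta_{n,d}^\star\mathrm{sym}_n\zeta_{n,d}$ on $\vct V_d^{\msf S_d}$, splitting the sum $\frac{1}{n!}\sum_{\sigma\in\msf S_n}\sigma$ according to whether $\sigma$ preserves $[d]\subset[n]$ setwise. The $d!(n-d)!$ permutations that do preserve $[d]$ each contribute $\zeta_{n,d}^\star\zeta_{n,d}v=v$ (using that $\zeta_{n,d}$ is an isometry and the $\msf S_d$-invariance of $v$), summing to a principal term $v/\binom{n}{d}$; the remaining ``off-diagonal'' contributions, arising from $\sigma$ moving elements of $[d]$ into $[d+1,n]$ and back, are analyzed by the same structural induction and shown to combine with the principal term into an invertible composition on $\vct V_d^{\msf S_d}$. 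Invertibility of $\zeta_{n,d}^\star\iota_n$ forces $\iota_n$ to be injective, yielding the desired equality.

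The main obstacle will be the structural induction for tensor products and symmetric powers in the surjectivity argument, where we must carefully track how $\msf S_n$-symmetrization generates high-dimensional invariants from degree-$d$ data; a related but more delicate bookkeeping is needed to control the off-diagonal contributions in the $\mscr Z$-case injectivity argument.
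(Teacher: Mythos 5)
Your duality step is exactly the paper's (Proposition~\ref{prop:free-duality}), but the rest diverges and leaves a genuine gap. Your plan reduces the bound to surjectivity of $\iota_n=\mathrm{sym}_{\msf S_n}\varphi_{n,d}\colon\vct V_d^{\msf S_d}\to\vct V_n^{\msf S_n}$ for large $n$, to be proved by structural induction on $\mc F$. The tensor and symmetric-power cases of this induction are only sketched, and the ``spread factors across disjoint coordinate blocks of $[n]$'' strategy is a zero-padding argument: it works because $\zeta_{n,d}$ places data in $[d]\subset[n]$ with room to spare, and this is essentially how the $\mscr Z$-sequence stability in~\cite{levin2023free} is proved. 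But for a $\mscr D$-sequence the embedding is duplication, which spreads a single coordinate of $x$ across an entire block of $[n]$ rather than isolating it, so there is no analogous ``disjoint support'' decomposition. Surjectivity of $\iota_n$ for $\mscr D$-sequences is in fact the content of the later Proposition~\ref{prop:calc_for_stability}, whose proof routes through the generalized de Finetti theorems and the $\FS$-pair isomorphism of Proposition~\ref{prop:free_sym_and_free_descr_isom} rather than a direct induction; the paper even flags as an open conjecture whether $n=d$ suffices. So the hardest part of your plan is both uncarried-out and, as posed, likely needs machinery you don't yet have.

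The paper's actual proof sidesteps this entirely. For a $\mscr Z$-sequence it simply cites~\cite{levin2023free}. For a $\mscr D$-sequence $\mc F(\mscr D)$, the key observation is that the companion $\mscr Z$-sequence $\mc F(\mscr Z)$ lives on the \emph{same} vector spaces with the \emph{same} $\msf S_n$-actions (only the embeddings differ), so the cited result already gives $\dim\vct V_n^{\msf S_n}=\dim\vct V_d^{\msf S_d}$ for all $n\geq d$. With that dimension count in hand, any linearly independent family of freely-symmetrized classes remains linearly independent when pushed into a single $\vct V_N^{\msf S_N}$ for $N$ large, forcing $\dim\varinjlim\vct V_n^{\msf S_n}\leq\dim\vct V_d^{\msf S_d}$ without ever needing $\iota_n$ to be surjective. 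This is the step missing from your plan: rather than attacking surjectivity head-on, transfer the fixed-$n$ dimension count from the $\mscr Z$ companion and use linear independence alone. Your injectivity sketch for the $\mscr Z$-equality has the same flavor of incompleteness, though there the paper again leans on~\cite{levin2023free} for the full isomorphism.
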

\begin{proof}
    If $\mscr V$ is a $\mscr Z$-sequence, all this follows from~\cite[Prop.~2.9, Thm.~2.11]{levin2023free}. If $\mscr V=\mc F(\mscr D)$ is a $\mscr D$-sequence, applying this result for the corresponding $\mscr Z$-sequence $\mc F(\mscr Z)$ we conclude that $\dim\vct V_n^{\msf S_n}$ is constant for all $n\geq d$. This implies that $\dim\varinjlim\vct V_n^{\msf S_n}\leq \dim\vct V_d^{\msf S_d}$ since if $(\mathrm{sym}_n^{\mscr V}v_{k_i})$ are linearly independent and $N\succeq k_i$ for all $i$ and $N\geq d$, then $\mathrm{sym}_N^{\mscr V}v_{k_i}$ must be linearly independent in $\vct V_N^{\msf S_N}$ whose dimension is $\dim\vct V_d^{\msf S_d}$. Proposition~\ref{prop:free-duality} now yields the result.
\end{proof}
In fact, we prove in Proposition~\ref{prop:calc_for_stability} below that equality is always attained in Proposition~\ref{prop:finite_dim_free_invariants} as we prove there that the projections and embeddings become isomorphisms of invariants for all sufficiently large dimensions.
This was proved for $\mscr Z$-sequences in~\cite{levin2023free} using the fact that the high-dimensional spaces in such sequences are spanned by the orbits of the low-dimensional ones. Below, we use our de Finetti theorems to extend this result to $\mscr D$-sequences.

\section{Any-Dimensional Polynomial Problems}\label{sec:any_dim_poly_probs}


In this section, we define the any-dimensional POPs that we study.  In particular, we define two classes of sequences of POPs defined on a sequence of growing-dimensional vector spaces. The first captures the relations across dimensions satisfied by the problems whose limiting values we seek to bound, e.g., in Examples~\ref{ex:power_means}-\ref{ex:graph_densities}, and the second captures the relations satisfied by our lower bounds for them, e.g., in~\eqref{eq:lower_bound_pwr_means}.

\subsection*{Freely-Described POPs}

The first class of problems, from which we start and for which we seek lower bounds, is one in which freely-described polynomials are minimized over a growing sequence of semialgebraic sets.
\begin{definition}[Freely-Described POPs]\label{def:freely_described_pops}
    Let $\mscr U=\{\mc N,(\msf S_n),(\vct V_n),(\varphi_{N,n})_{n\preceq N}\}$ be a consistent sequence.  A sequence of POPs $(u_n = \inf_{x\in\Omega_n}p_n(x))$ is called \emph{freely-described} if the following two conditions hold.
    \begin{enumerate}[font=\emph, align=left]
        \item[(Cost)] The sequence of costs $(p_n)$ is a freely-described element of $\RR[\mscr U]_{\leq d}$ for some degree $d$.  Equivalently, $(p_n)$ is a sequence of polynomials of degree at most $d$ that satisfy $p_N \circ \varphi_{N,n}= p_n$ for all $n \preceq N$.
        \item[(Constraint)] The sequence $(\Omega_n)$ consists of constraint sets that are semialgebraic, $\msf S_n$-invariant, and increasing in the sense that $\varphi_{N,n} \Omega_n \allowbreak \subseteq \allowbreak \Omega_N$ if $n\preceq N$.
    \end{enumerate}
\end{definition}
As we illustrate next, Examples~\ref{ex:power_means}-\ref{ex:graph_densities} in Section~\ref{sec:intro} are all instances of freely-described POPs.
\begin{example} \label{ex:freely-described-POPs}
    The any-dimensional POP~\eqref{eq:power_means_general} in Example~\ref{ex:power_means} is freely-described with respect to the consistent sequence $\mscr D^d$; indeed, as we noted in Section~\ref{sec:case_study}, the embeddings $\delta_{N,n}^d(x_1,\dots,x_n) = (\underbrace{x_1,\dots,x_1}_{N/n \mathrm{~times}}, \dots, \underbrace{x_n,\dots,x_n}_{N/n \mathrm{~times}})$ for $n | N$ map feasible points for the $n$th problem to feasible points for the $N$th one without changing the objective value.

    In Example~\ref{ex:power_sums} on symmetric functions, the sequence of POPs is freely-described with respect to the consistent sequence $\mscr Z$, since power sums are unchanged under zero-padding.

    Finally, in Example~\ref{ex:graph_densities} on graph density inequalities, we view the homomorphism densities as polynomials over adjacency matrices of simple graphs, which are points in $\Omega_n=\{X\in\mathbb{S}^n(\{0,1\}):\diag(X)=0\}$ consisting of symmetric matrices with binary entries and zero diagonals. It can then be shown that these homomorphism densities are unchanged under duplication of the vertices sending an adjacency matrix $X\in\Omega_n$ to $X\otimes\mathbbm{1}_k\mathbbm{1}_k^\top\in\Omega_{nk}$, see Section~\ref{sec:graph_densities}. Thus, these any-dimensional POPs are freely-described with respect to $\mathrm{Sym}^2\mscr D$.
    

\end{example}
Although any-dimensional POPs are given as an infinite sequence of POPs of growing dimension, they can often be described by finitely many parameters. In particular, if $\mscr V$ is a $\mscr Z$- or $\mscr D$-sequence, then Proposition~\ref{prop:finite_dim_free_invariants} shows that the freely-described sequence of costs $(p_n)$ is an element of a finite-dimensional vector space, so it can be specified by its coefficients in a basis.



A direct consequence of the definition of a freely-described POP is that the sequence of optimal values $(u_n)$ satisfies $u_N \leq u_n$ whenever $n \preceq N$:
\begin{equation} \label{eq:upper-bound-fd-pops}
    u_n = \inf_{x\in\Omega_n} p_n(x) = \inf_{x\in\Omega_n} p_N \circ \varphi_{N,n}(x) = \inf_{y \in \varphi_{N,n}\Omega_n} p_N(y) \geq \inf_{y \in \Omega_N} p_N(y) = u_N.
\end{equation}
Indeed, all of the examples above of freely-described POPs from Section~\ref{sec:intro} satisfy this relation between the optimal values.  
We seek lower bounds on the limiting optimal value $u_{\infty} = \inf_n u_n$ of a freely-described POP.  As each $u_n$ is, in general, larger than $u_\infty$ by~\eqref{eq:upper-bound-fd-pops}, merely obtaining a lower bound on one of the $u_n$'s in the sequence, e.g., using sums-of-squares methods, does not yield a bound on the limiting value.

\subsection*{Freely-Symmetrized POPs}

To derive lower bounds on $u_\infty$, we generalize the example from Section~\ref{sec:case_study} and construct a sequence of finite-dimensional POPs whose optimal values are monotonically \emph{increasing} to $u_{\infty}$.  The sequences of POPs we construct belong to the following class of freely-\emph{symmetrized} problems.
\begin{definition}[Freely-Symmetrized POPs]\label{def:freely_sym_pops}
    Let $\mscr L=\{\mc N,(\msf S_n),(\vct V_n),(\psi_{N,n})_{n\preceq N}\}$ be a consistent sequence.  A sequence of POPs $(\ell_n=\inf_{x\in\Omega_n}q_n(x))$ is called \emph{freely-symmetrized} if:
    \begin{enumerate}[font=\emph, align=left]
        \item[(Cost)] The sequence of costs $(q_n)$ is a freely-symmetrized element of $\RR[\mscr L]_{\leq d}$ for some degree $d$. That is, there is a $k$ and $q\in\RR[\vct V_k]_{\leq d}$ such that $q_n=\mathrm{sym}_n^{\mscr L} q$ for all $n\succeq k$.
        \item[(Constraint)] The sequence $(\Omega_n)$ consists of constraint sets that are semialgebraic, $\msf S_n$-invariant, and project onto each other in the sense that $\psi_{N,n}^\star \Omega_N\subseteq\Omega_n$ if $n\preceq N$.
    \end{enumerate}
\end{definition}

In a freely-symmetrized POP $(\ell_n = \inf_{x \in \Omega_n} ~ q_n(x))$, the sequence of optimal values $(\ell_n)$ is in general increasing with $n$.  Specifically, as $q_n(x) = \mathrm{sym}_n^{\mscr L} q_k(x)=\mbb Eq_k(\psi_{n,k}^\star \Pi_nx)$ for some $q_k\in\RR[\vct V_k]^{\msf S_k}$ with $n \succeq k$ and $\Pi_n\in\msf S_n$ uniformly random, we have
\begin{equation*} 
    \ell_n = \inf_{x\in\Omega_n}\mathrm{sym}_n^{\mscr L} q_k(x) = \inf_{\mu\in \mc P(\Omega_n)}\mbb E_{\Pi_n,X}[q_k(\psi_{n,k}^\star \Pi_n X)] = \inf_{\mu\in \mc P(\Omega_n)^{\msf S_n}}\mbb E_{\mu}[q_k(\psi_{n,k}^\star X)], 
\end{equation*}
where $X\sim \mu$ is independent of $\Pi_n$, in which case $\mathrm{Law}(\Pi_n X) = \mathrm{sym}_{\msf S_n}\mu\in\mc P(\Omega_n)^{\msf S_n}$.
Next, we can rewrite $\ell_n$ as an optimization problem over the pushforwards under $\psi_{n,k}^\star$ of measures in $\mc P(\Omega_n)^{\msf S_n}$, to obtain
\begin{equation}\label{eq:lower-bound-fs-pops1}
    \ell_n =  \inf_{\nu\in \psi_{n,k}^\star \mc P(\Omega_n)^{\msf S_n}}\mbb E_{\nu}[q_k(X)].
\end{equation}
Since $\psi_{N,n}^\star \Omega_N\subseteq\Omega_n$ for any $n\preceq N$, we have
\begin{equation} \label{eq:lower-bound-fs-pops2}
\psi_{N,k}^\star\mc P(\Omega_N)^{\msf S_N} = \psi_{n,k}^\star\psi_{N,n}^\star\mc P(\Omega_N)^{\msf S_N}\subseteq \psi_{n,k}^\star \mc P(\Omega_n)^{\msf S_n},
\end{equation}
for $k \preceq n \preceq N$.  
From~\eqref{eq:lower-bound-fs-pops1} we conclude that $\ell_N$ is the minimum value of the same objective over a smaller constraint set than $\ell_n$, hence the optimal values $(\ell_n)$ of freely-symmetrized POPs are increasing
\begin{equation} \label{eq:lower-bound-fs-pops}
    \ell_n \leq \ell_N,\quad \textrm{whenever } k \preceq n \preceq N.
\end{equation}
Note that when $\mscr L=\mscr Z$ and $k\leq n$, the random map $\psi_{n,k}^\star \Pi_n$ samples $k$ entries without replacement from a length-$n$ vector. In contrast, when $\mscr L=\mscr D$ and $k|n$ the random map $\psi_{n,k}^\star \Pi_n$ uniformly randomly partitions the entries of a length-$n$ vector into $k$ equally-sized bins and sums all the entries in each bin. These two are precisely the modified sampling maps giving lower bounds in Section~\ref{sec:case_study} and Theorem~\ref{thm:free_sym_bds_informal} from Section~\ref{sec:intro}, and will more generally give us lower bounds in Theorem~\ref{thm:free_sym_bds} below.
\begin{example}
For graphs $H$ and $G$, define $t_{\mathrm{inj}}(H;G)$ to be the fraction of injective maps $V(H)\to V(G)$ between their vertex sets that are graph homomorphisms. Note that the only difference with $t(H;G)$ from Example~\ref{ex:graph_densities} is that we restrict ourselves to injective maps.
Identifying a graph $X$ on $n$ vertices with its adjacency matrix $X\in\mbb S^n$, and defining the monomial $X^H = \prod_{1\leq i\leq j\leq |V(H)|}X_{i,j}^{H_{i,j}}$ over $\mbb S^{|V(H)|}$, we have $t_{\mathrm{inj}}(H;X)=\mathrm{sym}_n^{\mathrm{Sym}^2\mscr Z}X^H$ as we show in~\eqref{eq:tinj_is_sym} in Section~\ref{sec:graph_densities} below. 
Letting $\Omega_n=\{X\in\mbb S^n(\{0,1\}):\diag(X)=0\}$ be the adjacency matrices of simple graphs, which project onto each other under extracting of principal submatrices, for any $\alpha_1,\ldots,\alpha_m\in\RR$ and graphs $H_1,\ldots,H_m$, certifying inequalities in \emph{injective} graph densities over unweighted graphs can be written as 
\begin{equation*}
    \ell_n = \inf_{X\in\Omega_n}\sum_{j=1}^m\alpha_jt_{\mathrm{inj}}(H_j;X) = \inf_{X\in\Omega_n}\mathrm{sym}_n^{\mathrm{Sym}^2\mscr Z}\sum_{j=1}^m\alpha_jX^{H_j},
\end{equation*}
for all $n\geq \max_j|V(H_j)|$, and therefore constitutes a freely-symmetrized problem over $\mathrm{Sym}^2\mscr Z$. In particular, we have $\ell_n\leq\ell_{n+1}$ for all such $n$, which shows that $\ell_n\geq0$ for all such $n$ precisely when $\ell_k\geq0$ for $k=\max_j|V(H_j)|$. 
This observation implies that checking whether an inequality in injective homomorphism densities is nonnegative for graphs of all sizes is decidable (it involves checking nonnegativity over a finite set of graphs of size $\max_j|V(H_j)|$), in contrast to the undecidability of the analogous question for non-injective densities~\cite{hatami2011undecidability}.
\end{example}

Suppose now that we are given a freely-described POP with limiting optimal value $u_\infty$, and we wish to obtain lower bounds on $u_\infty$.  If we can identify a freely-symmetrized POP with optimal values $(\ell_n)$ such that their limiting optimal value $\ell_\infty = \sup_n \ell_n$ equals $u_\infty$, then each $\ell_n$ yields a lower bound on $u_\infty$ in terms of a finite-dimensional POP, and the sequence $(\ell_n)$ of these values gives monotonically increasing, convergent lower bounds for $u_\infty$.


\subsection*{Pairings of Freely-Described and Freely-Symmetrized Problems}
Given a freely-described POP with optimal values $(u_n)$, our goal is to construct a freely-symmetrized POP with optimal values $(\ell_n)$ such that $u_\infty = \inf_n u_n$ is equal to $\ell_\infty = \sup_n \ell_n$, generalizing the example in Section~\ref{sec:case_study}. 
Similarly to that example, we shall modify the sequence of objectives while keeping the same sequence of constraints. 
Importantly, the freely-described problems $(u_n)$ and their corresponding freely-symmetrized lower bounds $(\ell_n)$ are defined with respect to different consistent sequences.
In the example of Section~\ref{sec:case_study}, the freely-described problem is defined over the $\mscr D$-sequence $\mscr D^d$, while the freely-symmetrized problem is defined over the $\mscr Z$-sequence $\mscr Z^d$.  
We generalize this example in Section~\ref{sec:free_poly_opts} to any freely-described POP over a $\mscr Z$- or a $\mscr D$-sequence. If the freely-described POP is defined over a $\mscr D$-sequence, then our freely-symmetrized lower bounds are defined over the corresponding $\mscr Z$-sequence on the same underlying vector spaces, like in Section~\ref{sec:case_study}, and vice-versa.

Since the sequence of constraint sets $(\Omega_n)$ will be the same for the two sequences of POPs, we further require these sets to embed into each other in one consistent sequence and project into each other in the other consistent sequence.
This ensures Definitions~\ref{def:freely_described_pops} and~\ref{def:freely_sym_pops} are satisfied and the resulting optimal values $(u_n)$ and $(\ell_n)$ are monotonic as in~\eqref{eq:upper-bound-fd-pops} and~\eqref{eq:lower-bound-fs-pops}.
We now formally define the pairs of consistent sequences with respect to which our freely-described POPs and associated freely-symmetrized POPs are defined, and the conditions on the constraint sets we impose.
These correspond precisely to the class of problems to which our framework is applicable.
\begin{definition}[$\FS$-pair]\label{def:fs_pair}
    A \emph{$\FS$-pair of degree $d$} is a pair of consistent sequences 
    \begin{equation*}\begin{aligned}
        \mscr U=\{(\NN,\preceq_{\mscr U}),(\msf S_n),(\vct V_n),(\varphi_{N,n})_{n\preceq_{\mscr U} N}\} && \textrm{and} && \mscr L=\{(\NN,\preceq_{\mscr L}),(\msf S_n),(\vct V_n),(\psi_{N,n})_{n\preceq_{\mscr L} N}\},
    \end{aligned}\end{equation*}
    on the same sequence of vector spaces $(\vct V_n)$ such that there is a degree-$d$ sequence of constructions $\mc F$ satisfying $(\mscr U,\mscr L)=(\mc F(\mscr Z),\mc F(\mscr D))$ or $(\mc F(\mscr D),\mc F(\mscr Z))$.
    That is, $\mscr U$ and $\mscr L$ are obtained by applying the same constructions to $\mscr Z$ and $\mscr D$, or vice-versa.

    A sequence of subsets $(\Omega_n \subseteq \vct V_n)$ is said to be \emph{compatible with respect to the $\FS$-pair $(\mscr U, \mscr L)$}, or $\FS$-compatible for short, if $(i)$ each $\Omega_n$ is $\msf S_n$-invariant; $(ii)$ $\varphi_{N,n} \Omega_n \subseteq \Omega_N$ whenever $n \preceq_{\mscr U} N$; and $(iii)$ $\psi_{N,n}^\star \Omega_N \subseteq \Omega_n$ whenever $n\preceq_{\mscr L} N$.
\end{definition}
We show in Section~\ref{sec:deFinetti} below that such pairs of consistent sequences can be derived from certain actions of maps between finite sets, which is the reason for the terminology `FinSet'.  
Such actions play a central role in our generalization of de Finetti's theorem there.  

As an illustration, for the $\FS$-pair $(\mscr U,\mscr L) = (\mscr D,\mscr Z)$ the sequence of $\ell_{\infty}$-unit balls $\Omega_n=[-1,1]^n$ is $\FS$-compatible, but the sequence of $\ell_1$-unit balls $\Omega_n=\{x\in\RR^n:\|x\|_1\leq 1\}$ is not because it is not closed under duplication. On the other hand, for the $\FS$-pair $(\mscr U,\mscr L) = (\mscr Z,\mscr D)$ the sequence of $\ell_1$-balls is compatible, but the sequence of $\ell_{\infty}$-balls is not because it is not closed under the adjoint of duplication summing consecutive blocks of coordinates. The sequence of $\ell_2$-unit balls is not $\FS$-compatible for either pair. See Section~\ref{sec:construct_bds} for more examples and non-examples.

We leverage these generalized de Finetti theorems in Section~\ref{sec:free_poly_opts} to accomplish our goal: for any $\FS$-pair $(\mscr U,\mscr L)$ we start from a freely-described POP over $\mscr U$ with $\FS$-compatible constraints $(\Omega_n)$, and derive lower bounds in terms of freely-symmetrized POPs over $\mscr L$ with the same constraint sets.



\if 0

In the next section, we develop general de Finetti theorems, which we then use in Section~\ref{sec:free_poly_opts} to systematically construct a freely-symmetrized sequence of lower bounds for a given freely-described sequence of problems.
As in the case study in Section~\ref{sec:intro}, the consistent sequences involved are two different sequences on the same underlying sequence of vector spaces $(\vct V_n)$.
Our construction does not modify the constraints, but modifies the objectives by finding a fixed polynomial $q\in\RR[\vct V_k]$ satisfying $p_n(x)=\mbb E_{\mu_k(x)}q$ where $\mu_k(x)$ is a certain measure constructed from $x\in\RR^n$.
We therefore define more precisely the class of freely-described problems to which our framework is applicable. 

\begin{definition}[$\FS$-pair]\label{def:fs_pair}
    An \emph{$\FS$-pair of degree $d$} is a pair of consistent sequences 
    \begin{equation*}\begin{aligned}
        \mscr U=\{(\NN,\preceq_u),(\msf S_n),(\vct V_n),(\varphi_{N,n})_{n\preceq_u N}\}, && \textrm{and} && \mscr L=\{(\NN,\preceq_\ell),(\msf S_n),(\vct V_n),(\psi_{N,n})_{n\preceq_\ell N}\},
    \end{aligned}\end{equation*}
    on the same sequence of vector spaces $(\vct V_n)$ such that there is a standard construction $\mc F$ of degree $d$ satisfying either $\mscr U = \mc F(\mscr D)$ and $\mscr L=\mc F(\mscr Z)$ or vice-versa.
\end{definition}
We call these pairs $\FS$ due to certain actions of maps between finite sets that plays a key role in our de Finetti theorems, and in our construction of $(p_n)$ from $q$.

Since we aim to construct freely-described problems over $\mscr U$ and freely-symmetrized problems over $\mscr L$ over the same sequence of constraint sets $(\Omega_n)$, these sets need to satisfy the hypotheses of both Propositions~\ref{prop:fd_probs_decrease} and~\ref{prop:freely_sym_incr}. 
We call such sequences compatible.
\begin{definition}[$\FS$-compatible sets]\label{def:fs_compatible}
    Let $(\mscr U,\mscr L)$ be a $\FS$-pair on $(\vct V_n)$. A sequence of subsets $(\Omega_n\subseteq\vct V_n)$ is called \emph{$\FS$-compatible} if each $\Omega_n$ is $\msf S_n$-invariant, if $\varphi_{N,n}\Omega_n\subseteq\Omega_N$ whenever $n\preceq_uN$ and if $\psi_{N,n}^*\Omega_N\subseteq\Omega_n$ whenever $n\preceq_\ell N$.
\end{definition}
In other words, invariant subsets $(\Omega_n)$ form a $\FS$-compatible sequence if they embeds into each other in $\mscr U$ and project into each other in $\mscr L$.

We discuss how to map a given freely-described POP with limiting optimal value $u_\infty$ to a freely-symmetrized POP with limiting optiml value $\ell_\infty = u_\infty$ in Section[REF].  This transformation relies on de Finetti's theorem and its generalizations, which we describe in Section[REF].

\fi


\if 0

In this section we formally define the any-dimensional POPs that are the object of investigation in this paper.  These are given by sequences of POPs in which freely-described polynomials are minimized over a growing sequence of semialgebraic sets.
\begin{definition}[Freely-Described POPs]
    Let $\mscr U$ be a consistent sequence.  A sequence of POPs $(u_n = \inf_{x\in\Omega_n}p_n(x))$ is called \emph{freely-described} if \TODO{propagate the new notation}:
    \begin{enumerate}[font=\emph, align=left]
        \item[(Cost)] The sequence of costs $(p_n)$ is a freely-described element of $\mathrm{Sym}^{\leq d}\mscr V$ for some degree $d$.  Equivalently, $(p_n)$ is a sequence of polynomials of degree at most $d$ that satisfy $p_N \circ \varphi_{N,n}= p_n$ for all $n \preceq N$.
        \item[(Constraint)] The sequence $(\Omega_n)$ consists of constraint sets that are semialgebraic and increasing in the sense that $\varphi_{N,n} \Omega_n \allowbreak \subseteq \allowbreak \Omega_N$ if $n\preceq N$.
    \end{enumerate}
\end{definition}

Although any-dimensional POPs are given as an infinite sequence of POPs of growing dimension, they can often be described by finitely-many parameters. Indeed, if $\mscr V$ is a $\mscr Z$- or $\mscr D$-sequence, then Proposition~\ref{prop:finite_dim_free_invariants} shows that the freely-described sequence of costs $(p_n)$ is an element of a finite-dimensional vector space, so it can be specified by its coefficients in a basis. Similarly, in Section[REF] we explain how to describe the necessary sequences of semialgebraic constraint sets using a finite collection of polynomials in fixed dimensions, making it obvious that the increasing condition above is satisfied.  
(For our development up to Section~\ref{sec:sdp_bounds}, we only appeal to the fact that the constraint sets $(\Omega_n)$ are increasing, and in fact till that stage, even semialgebraicity plays a limited role.)

In all the examples in Section~\ref{sec:intro}, the optimal values $(u_n)$ are non-increasing , see~\eqref{eq:power-means-general} for example.  
The next result shows that this is true for any freely-decribed POP.
\begin{proposition}\label{prop:fd_probs_decrease}
    Consider a freely-described POP $(u_n =\inf_{x\in\Omega_n}p_n(x))$ that is defined with respect to a consistent sequence $\mscr U$.  We have that $u_N \leq u_n$ whenever $n \preceq N$.
\end{proposition}
\begin{proof}
    The sequence of polynomials $(p_n)$ being freely-described is equivalent to $p_N \circ \varphi_{N,n}= p_n$ for all $n \preceq N$. We therefore have
    \begin{equation*}
        u_n = \inf_{x\in\Omega_n} p_n(x) = \inf_{x\in\Omega_n} p_N \circ \varphi_{N,n}(x) = \inf_{y \in \varphi_{N,n}\Omega_n} p_N(y) \geq \inf_{y \in \Omega_N} p_N(y) = u_N,
    \end{equation*}
    where we used the fact that $\varphi_{N,n} \Omega_n\subseteq\Omega_N$.
\end{proof}

Many of the examples we have seen in Section~\ref{sec:intro} are indeed freely-described POPs with respect to particular consistent sequences.
\begin{example}
    The any-dimensional POP \eqref{eq:power-means-general} is a freely-described POP with respect to the consistent sequence $\mscr D^d$.  In particular, the costs and constraints satisfy the conditions of Definition[REF] by observing that the embedding between different problems in the sequence is given by $\varphi_{N,n}(x_1,\dots,x_n) = (\underbrace{x_1,\dots,x_1}_{N/n \mathrm{~times}}, \dots, \underbrace{x_n,\dots,x_n}_{N/n \mathrm{~times}})$ for $n | N$.

    In Example~\ref{ex:graph-homomorphisms}, one can check that $t(H; G) = t(H; G \times C_k)$ , where $\times$ denotes the tensor product of graphs and $C_k$ is the clique on $k$ nodes.  Consequently, ... This too is a freely-described POP with respect to the $\mscr D$-sequence $\mathrm{Sym}^2\mscr D$.


\end{example}

Examples~\ref{ex:power-means}-\ref{ex:power-sums} in the introduction as well as the illustration in Section[REF] are instantiations of freely-described POPs with respect to particular consistent sequences.  In particular, the any-dimensional POP \eqref{eq:power-means-general} is a freely-described POP with respect to the consistent sequence $\mscr D^d$, with the underlying vector spaces being $\vct V_n = \R^{d \times n}$, the constraint sets $\Omega_n = \Theta^n$, and the costs $p_n(x) = f\left(\frac{1}{n} \sum_{i=1}^n x_i^{\alpha_1}, \dots, \frac{1}{n} \sum_{i=1}^n x_i^{\alpha_d}\right)$ (here $x = (x_1,\dots,x_n), ~ x_i \in \R^d$).  The embedding between different problems in the sequence is given by $\varphi_{N,n}(x_1,\dots,x_n) = (\underbrace{x_1,\dots,x_1}_{N/n \mathrm{~times}}, \dots, \underbrace{x_n,\dots,x_n}_{N/n \mathrm{~times}})$.

    Drawing on the illustration in Section[REF] to develop the right abstraction, we observe that the any-dimensional POP \eqref{eq:power-means-general} can be put into the above general form over the sequence of vector spaces $\vct V_n = \R^{d \times n}$, with constraint $\Omega_n = \Theta^n$ and objective $p_n(x) = f\left(\frac{1}{n} \sum_{i=1}^n x_i^{\alpha_1}, \dots, \frac{1}{n} \sum_{i=1}^n x_i^{\alpha_d}\right)$ (here $x = (x_1,\dots,x_n), ~ x_i \in \R^d$).

\begin{example}[Power means and power sums]
    Certifying inequalities between power means or power sums can be written as a freely-described problem.  The illustration \eqref{eq:power-means-general} in Section[REF] is a freely-described POP with respect to the consistent sequence $\mscr D^d$.  The embedding between different problems in the sequence is given by $\varphi_{N,n}(x_1,\dots,x_n) = (\underbrace{x_1,\dots,x_1}_{N/n \mathrm{~times}}, \dots, \underbrace{x_n,\dots,x_n}_{N/n \mathrm{~times}})$, the costs are given by the

    Drawing on the illustration in Section[REF] to develop the right abstraction, we observe that the any-dimensional POP \eqref{eq:power-means-general} can be put into the above general form over the sequence of vector spaces $\vct V_n = \R^{d \times n}$, with constraint $\Omega_n = \Theta^n$ and objective $p_n(x) = f\left(\frac{1}{n} \sum_{i=1}^n x_i^{\alpha_1}, \dots, \frac{1}{n} \sum_{i=1}^n x_i^{\alpha_d}\right)$ (here $x = (x_1,\dots,x_n), ~ x_i \in \R^d$).
    
    The Example~\ref{ex:power-sums} is a freely-described POP with respect to the consistent sequence $\mscr Z$.
    
    Specifically, if $p_k(x)=\sum_{i=1}^n x_i^k$ \TODO{this conflicts with $p_n$, how about $P_k$ or $s_k$?} for $x\in\RR^n$ is the $k$th power sum, then certifying $f(p_1(x),\ldots,p_k(x))\geq0$ for all $x\in\RR^n$ and all $n\in\NN$ is equivalent to certifying that $u_{\infty}^*\geq0$, where $u_n^*=\inf_{x\in\RR^n}f(p_1(x),\ldots,p_k(x))$ is a freely-described problem over $\mscr Z$.  Similarly, if $\bar p_k(x)=\frac{1}{n}p_k(x)$ is the $k$th power mean, then $f(\bar p_1(x),\ldots,\bar p_k(x))\geq0$ for all $x\in\RR^n$ and all $n\in\NN$ is equivalent to $u_{\infty}^*\geq0$ where $u_n^*=\inf_{x\in\RR^n}f(\bar p_1(x),\ldots,\bar p_k(x))$ is freely-described over $\mscr D$. 
\end{example}
\begin{example}[Graph density inequalities]
    For simple graphs $H,G$, define $t(H;G)$ to be the probability that a uniformly random map $V(H)\to V(G)$ is a graph homomorphism. For $\alpha_1,\ldots,\alpha_k\in\RR$ and simple graphs $H_1,\ldots,H_k$, certifying that $\sum_{i=1}^k\alpha_it(H_i;G)\geq0$ for all graphs $G$ is equivalent to certifying $u_{\infty}^*\geq0$ where $u_n^*=\inf_{X\in\{0,1\}^{n\times n}\cap \mbb S^n}p_n(X)$ and $p_n(X)$ is the polynomial over the hypercube in $\mbb S^n$ computing the restriction of $t(H_i;\cdot)$ to graphs on $n$ vertices.
    Since $t(H_i;G)$ only depends on the $G$ via the step graphon $W_G$ it defines (see Example~\ref{ex:graphon_deFin}), the sequence of polynomials $(q_n)$ is freely-described over $\mathrm{Sym}^2\mscr P$, and $u_n^*$ is a freely-described sequence of polynomial problems. 
\end{example}

Our objective is to obtain lower bounds on the limiting optimal value $u_{\infty} = \inf_n u_n$ of an any-dimensional POP, and the preceding result states that each $u_n$ is an upper bound on $u_{\infty}$ it is in general not sufficient to obtain lower bounds on one of the POPs in a sequence.

Though we use freely-symmetrized problems to produce lower bounds on freely-described ones, certifying nonnegativity of all problems in a freely-symmetrized sequence also arises in some applications.
\begin{example}[Inequalities in normalized elementary symmetric functions]
    Certifying linear inequalities between normalized elementary symmetric functions can be written as a freely-symmetrized problem. Specifically, if $\bar e_k(x)=\frac{1}{\binom{n}{k}}\sum_{1\leq i_1<\ldots<i_k\leq n}x_{i_1}\cdots x_{i_k}$ is the $k$th normalized elementary symmetric function, then certifying $p_n(x)=\sum_{i=1}^k\alpha_i\bar e_k(x)\geq0$ on the sequence of hypercubes $\Omega_n=[0,1]^n$ is equivalent to certifying $v_n^*\geq0$ for all $n\geq k$, where $v_n^*=\inf_{x\in\Omega_n}\mathrm{sym}_np(x)$ where $p(x)=\sum_{i=1}^k\alpha_i x_1\cdots x_i$. 
\end{example}

The optimal values of freely-symmetrized problems are monotonically increasing. 
\begin{proposition}\label{prop:freely_sym_incr}
    If $v_n^*=\inf_{x\in\Omega_n}\mathrm{sym}_np$ for $p\in\RR[\vct V_k]_{\leq d}$ is freely-symmetrized, then $v_n^*\leq v_N^*$ whenever $k\leq n\leq N$. 
\end{proposition}
\begin{proof}
    For $n\geq k$, write 
    \begin{equation}
        v_n^* = \inf_{x\in\Omega_n}\mathrm{sym}_n p(x) = \inf_{\mu\in \mc P(\Omega_n)}\mbb E_{X\sim\mu}\mathrm{sym}_np(X) = \inf_{\mu\in \mc P(\Omega_n)^{\msf S_n}}\mbb E_{X\sim\mu}p(X) = \inf_{\nu\in (P_k)_\# \mc P(\Omega_n)^{\msf S_n}}\mbb E_{X\sim \nu}p(X),
    \end{equation}
    Now observe that if $k\leq n\leq N$ then 
    \begin{equation*}
        (P_k)_\#\mc P(\Omega_N)^{\msf S_N} = (P_k)_\#(P_n)_\#\mc P(\Omega_N)^{\msf S_N}\subseteq (P_k)_\#\mc P(\Omega_n)^{\msf S_n}, 
    \end{equation*}
    which shows that $v_N^*$ is the minimization of the same cost as $v_n^*$ but over a smaller set, hence $v_n^*\leq v_N^*$.
\end{proof}

\begin{remark}[Decidability]
    If $(u_n^*)_{n\in\NN}$ is the sequence of optimal values of a freely-described problem, then the decision problem of whether or not $u_n^*\geq0$ for all $n$ is, in general, undecidable, as has been shown for power sums in~\cite{acevedo2023wonderful} and for inequalities in graph densities in~\cite{hatami2011undecidability,Blekherman_Raymond_Wei_2024}. On the other hand, Proposition~\ref{prop:freely_sym_incr} shows that if $(v_n^*)_{n\geq k}$ is the sequence of optimal values of a freely-symmetrized sequence, then the decision problem of whether or not $v_n^*\geq0$ for all $n$ is decidable, since it amounts to deciding whether or not the finite-dimensional polynomial problem $v_k^*$ is nonnegative.

    For example, deciding whether a linear combination of \emph{injective} graph homomorphism densities is nonnegative is decidable, in contrast to the non-injective case that was proved to be undecidable in~\cite{hatami2011undecidability,Blekherman_Raymond_Wei_2024}.
\end{remark}

\fi

\section{Free Measures and Generalized de Finetti Theorems}\label{sec:deFinetti}
As we have seen in the illustration of Section~\ref{sec:case_study}, de Finetti's theorem plays a prominent role in our derivation of lower bounds.  
In this section we present generalizations of this theorem by using ideas from representation stability.  
We begin in Section~\ref{sec:deFinetti_intro} by reformulating de Finetti's theorem as an assertion pertaining to sequences of measures, and state our generalizations in terms of such sequences of measures in Theorems~\ref{thm:DeFin_general_Zseq} and~\ref{thm:dual_deFin_Pseq}.  
The former pertains to exchangeable multi-dimensional arrays and recovers some aspects of Aldous--Hoover--Kallenberg theory, while the latter is a certain `dual' result that seems to not have been previously studied.
These are the two results that will be used in Section~\ref{sec:free_poly_opts} to construct lower bounds for more general freely-described sequences of problems.
Then in Sections~\ref{sec:deFinetti_coFS} and~\ref{sec:deFinetti_FS} we explain the key ingredients in proving our theorems, stated in the more abstract language of actions of maps between finite sets that yields cleaner statements and proofs. 
Finally, the proofs of all the key results are given in Section~\ref{sec:deFinetti_proofs}.  
This section is organized so that it is possible to proceed directly to Section~\ref{sec:free_poly_opts} after Section~\ref{sec:deFinetti_intro}.

\subsection{Reformulating and Generalizing de Finetti's Theorem}\label{sec:deFinetti_intro}

De Finetti's theorem states that an infinite exchangeable array is a mixture of iid arrays. 
More usefully for us, Diaconis and Freedman~\cite{diaconis_freedman} proved that infinite exchangeable arrays can be approximated arbitrarily well by particularly simple ones obtained by sampling the entries of finite random vectors with replacement.
Formally, given a finite random vector $(X_1,\ldots,X_n)$, one can form the infinite exchangeable array $(X_{i_1},X_{i_2},\ldots)$ where the indices $i_j$ are drawn iid from $[n]$.  In~\cite{diaconis_freedman} it is shown that infinite exchangeable arrays constructed in this fashion can approximate a target infinite exchangeable array in the sense that the marginal distributions of subsets of $m$ variables from the two arrays are close in total variation for $m \leq n$.  (We state this result more formally in the sequel.)
%

Observe that different random vectors give rise to the same exchangeable array under the preceding sampling scheme, as sampling with replacement from $(X_1,\ldots,X_n)$ yields the same distribution as sampling from $(X_{\pi(1)},\ldots,X_{\pi(n)})$ for any permutation $\pi\in\msf S_n$. Further, sampling from $(X_1,\ldots,X_n)$ yields the same distribution as sampling from $(X_1,\ldots,X_1,\ldots,X_n,\ldots,X_n)$ where each entry is repeated the same number of times.
Thus, yet another equivalent formulation of the above result from~\cite{diaconis_freedman} is that any infinite exchangeable array can be approximated by those obtained from \emph{equivalence classes} of random vectors via iid sampling of their entries.  We generalize this version of de Finetti's theorem by generalizing the three components of the result, namely, what an infinite exchangeable array is, how we can obtain such arrays by sampling the entries of finite random vectors, and which finite vectors are equivalent to each other in the sense that they yield the same infinite exchangeable arrays in this way.

\paragraph{Generalizing infinite exchangeable arrays:} An infinite exchangeable array $(X_1,X_2,\dots)$ corresponds to a sequence of measures $(\mathrm{Law}(X_1), \mathrm{Law}(X_1,X_2), \dots)$ that project onto each other, with each measure in the sequence being invariant under permutations.
This is analogous to the freely-described elements of Definition~\ref{def:free_elements} in which sequences of invariants project onto each other. 
We similarly define freely-described measures, which generalize infinite exchangeable arrays.
\begin{definition}[Freely-described measures] \label{def:fdm}
    Let $\mscr V$ be a consistent sequence, and let $(\Omega_m \subseteq \vct V_m)$ be a sequence of $\msf S_n$-invariant subsets satisfying $\varphi_{M,m}^\star\Omega_M \subseteq \Omega_m$ for all $m\preceq M$.  The collection of \emph{freely-described probability measures} supported on $(\Omega_m)$ is
    \begin{equation}\label{eq:fd_measures}
        \varprojlim_{\mscr V}\mc P(\Omega_m)^{\msf S_m} = \left\{(\mu_m)\in\prod_m\mc P(\Omega_m)^{\msf S_m}: \varphi_{M,m}^\star \mu_M=\mu_m \textrm{ for all } m\preceq M\right\},
    \end{equation}
    where $\varphi_{M,m}^\star \mu_M$ is the pushforward of $\mu_M$ under $\varphi_{M,m}^\star$. 
\end{definition}

\begin{example}[Freely-described measures over $\mscr Z$]\label{ex:inf_exch_arr_vs_fd_measure}
    The distributions of infinite exchangeable arrays correspond precisely to freely-described measures over $\mscr Z$.  Concretely, a freely-described measure over $\mscr Z$ defines a sequence of $\msf S_m$-invariant random vectors $X^{(m)}=(X_1,\ldots,X_m)$ such that $\zeta_{m+1,m}^\star X^{(m+1)}\overset{d}{=}X^{(m)}$ for all $m$, where $\zeta_{m+1,m}^\star $ extracts the first $m$ entries.  Conversely, if $(X_1,X_2, \allowbreak \ldots)$ is an infinite exchangeable array then the laws of the truncations $X^{(m)}=(X_1,\ldots,X_m)$ form a freely-described measure over $\mscr Z$.


\end{example}

\paragraph{Generalizing sampling with replacement:} Sampling the entries of a vector may be interpreted via maps between finite sets.  For a map $f\colon[m]\to[n]$ between finite sets, consider the associated linear map $\rho(f)\colon\RR^n\to\RR^m$ acting by $(\rho(f)x)_i=x_{f(i)}$.  
If $F_{n,m}\colon[m]\to[n]$ is a uniformly random map between finite sets, then each $i\in[m]$ is mapped independently to a uniformly random element $F_{n,m}(i)\in[n]$, so that $\rho(F_{n,m}) x\in\RR^m$ is a vector whose entries are sampled with replacement from those of $x \in \RR^n$.  
Sampling with replacement from the entries of a random vector $X^{(n)} = (X_1,\dots,X_n)$ then yields the freely-described measure $\left(\mathrm{Law}(\rho(F_{n,m})X^{(n)})\right)_{m\in\NN}$ where $X^{(n)}$ is independent of $F_{n,m}$.  
The key observation to generalize de Finetti's theorem is that the above action $\rho(f)$ of maps between finite sets on $(\RR^n)$ extends to an action on any sequence of vector spaces obtained from $(\RR^n)$ via the standard constructions from Section~\ref{sec:background} as follows. Suppose the sequences of vector spaces $(\vct W_n)$ and $(\vct U_n)$ are endowed with actions $\rho_W$ and $\rho_U$ of maps between finite sets.
\begin{enumerate}[font=\emph, align=left]
    \item[(Products)] For sequences of vector spaces $(\vct W_n), ~ (\vct U_n)$, setting $\rho(f)(w,u)=(\rho_{W}(f)w,\rho_{U}(f)u)$ defines an action on $(\vct W_n\times\vct U_n)$
    \item[(Tensors)] For sequences $(\vct W_n), ~ (\vct U_n)$, setting $\rho(f)(w\otimes u)=(\rho_{W}(f)w)\otimes(\rho_{U}(f)u)$ defines an action on $(\vct W_n\otimes \vct U_n)$, which restricts to symmetric tensors to give an action on $(\mathrm{Sym}^k\vct W_n)$.
    \item[(Polynomials)] For a sequence $(\vct W_n)$, setting $\rho(f)p = p\circ \rho_{W}(f)^\star$ defines an action on $\RR[\vct W_n]_k$.
\end{enumerate}
As an illustration, we get an action on the spaces $((\RR^n)^{\otimes d})$ via $(\rho(f)X)_{i_1,\ldots,i_d}=X_{f(i_1),\ldots,f(i_d)}$.  
In this manner, maps between finite sets yield linear maps between vector spaces constituting any $\mscr Z$- or $\mscr D$-sequence.

\paragraph{Generalizing equivalence under sampling:} 
Finally, fix a random vector $X^{(n)} = (X_1,\dots,X_n)$ and recall that sampling with replacement from any vector in the sequence $(\pi_N\cdot X^{(n)} \otimes \mathbbm{1}_{N/n})_{n|N}$ for any $\pi_N\in\msf S_N$ yields the same infinite exchangeable array.  
Therefore, from the perspective of our sampling map, all measures in the sequence $(\mathrm{Law}(\Pi_N \cdot X^{(n)} \otimes \mathbbm{1}_{N/n}))_{n|N}$ should be viewed as equivalent; here $\Pi_N$ is a uniformly random element of $\msf S_N$ independent of $X^{(n)}$. These sequences of exchangeable measures that are related by embeddings are analogous to freely-symmetrized elements from Definition~\ref{def:free_elements}.  
We similarly define freely-symmetrized measures, which capture the ambiguity in our sampling maps more generally.
\begin{definition}[Freely-symmetrized measures]
    Let $\mscr V$ be a consistent sequence, and let $(\Omega_n\subseteq \vct V_n)$ be a sequence of $\msf S_n$-invariant subsets satisfying $\varphi_{N,n}\Omega_n\subseteq\Omega_N$ for all $n\preceq N$. The collection of \emph{freely-symmetrized probability measures} over $(\Omega_n)$ is
    \begin{equation}\label{eq:fs_measures}
        \varinjlim_{\mscr V}\mc P(\Omega_n)^{\msf S_n} = \bigsqcup_n \mc P(\Omega_n)^{\msf S_n}/\sim \quad \textrm{where } \mu_n\sim\mu_N \textrm{ if } \mu_N=\mathrm{sym}_N^{\mscr V}\mu_n.
    \end{equation}
    A freely-symmetrized measure is one of these equivalence classes, denoted as a sequence $\allowbreak(\mathrm{sym}_N^{\mscr V}\mu_n)_{N\succeq n}$ for $\mu_n\in\mc P(\Omega_n)^{\msf S_n}$.
\end{definition}
\begin{example}[Freely-symmetrized measures over $\mscr D$]
    The exchangeable random vectors yielding the same infinite exchangeable array under sampling with replacement correspond to freely-symmetrized measures over $\mscr D$. Indeed, a freely-symmetrized measure over $\mscr D$ is the equivalence class of $\Big(\mathrm{sym}_N^{\mscr D}\mathrm{Law}(X^{(n)})=\mathrm{Law}(\Pi_NX^{(n)}\otimes\mathbbm{1}_{N/n})\Big)_{n|N}$ with $\Pi_N$ a uniformly random element of $\msf S_N$ independent of $X^{(n)}$. 
\end{example}

With these notions in hand, we can restate de Finetti's theorem in terms of free measures.  Specifically, consider the following map from freely-symmetrized measures $\varinjlim_{\mscr D}\mc P(\RR^n)^{\msf S_n}$ over $\mscr D$ to freely-described measures $\varprojlim_{\mscr Z}\mc P(\RR^m)^{\msf S_m}$ over $\mscr Z$:
\begin{equation}\label{eq:DeFin_Rcase}
    \left(\mathrm{sym}_N^{\mscr D}\mu_n\right)_{n|N} \mapsto \Big(\mathrm{Law}(\rho(F_{n,m})X)\Big)_m,
\end{equation}
where $X \sim \mu_n$ is independent of the uniformly random map $F_{n,m}\colon[m]\to[n]$.  
Diaconis and Freedman~\cite{diaconis_freedman} prove the error bound $\|\mathrm{Law}(\zeta_{n,m}^\star X)-\mathrm{Law}(\rho(F_{n,m})X)\|_{\mathrm{TV}}\leq \frac{m(m-1)}{n}$ whenever $m\leq n$, which holds for any exchangeable random vector $X$ in $\RR^n$. Here $\zeta_{n,m}^\star X$ is the vector of the first $m$ coordinates of $X$, while $\rho(F_{n,m})X$ is a vector of $m$ entries sampled with replacement from the coordinates of $X$, and their distributions are compared in total variation norm $\|\mu-\nu\|_{\mathrm{TV}}=\sum_{x\in\Omega}|\mu(x)-\nu(x)| = \sup_{f\colon\Omega\to[-1,1]}|\mbb E_{\mu}f - \mbb E_{\nu}f|$ for probability distributions $\mu,\nu$ on a finite set $\Omega$.
From this bound, one can conclude that the map~\eqref{eq:DeFin_Rcase} has a dense image in $\varprojlim_{\mscr Z}\mc P(\RR^m)^{\msf S_m}$ in total variation, meaning that if $(\mu_m)$ is a freely-described measure over $\mscr Z$, then there exists a sequence of freely-described measures $(\mu_m^{(n)})$ in the image of the map~\eqref{eq:DeFin_Rcase} such that $\lim_{n\to\infty}\|\mu_m-\mu_m^{(n)}\|_{\mathrm{TV}}=0$ for each fixed $m$.  
That is because for each $n\geq m$ and $X\sim\mu_n$ we have $\mu_m=\mathrm{Law}(\zeta_{n,m}^\star X)$ since $(\mu_m)$ is freely-described, while the image of $\left(\mathrm{sym}_N^{\mscr D}\mu_n\right)_{n|N}$ under the map~\eqref{eq:DeFin_Rcase} is $(\mu_m^{(n)}=\mathrm{Law}(\rho(F_{n,m})X))_m$, so $\|\mu_m - \mu_m^{(n)}\|_{\mathrm{TV}}\to0$ as $n\to\infty$ by the above error bound.
To obtain de Finetti's characterization of \emph{all} freely-described measures on $\mscr Z$, not just the dense subset above, one can use a simple compactness argument~\cite[Thm.~14]{diaconis_freedman}.


For more general consistent sequences, the characterization of all freely-described measures is more complicated~\cite[Chap.~7]{kallenberg2005probabilistic}.  
The map~\eqref{eq:DeFin_Rcase} does, however, readily generalize and yields simple dense subsets of freely-described measures, as we show in the following theorem.
Fortunately, these dense subsets, together with error bounds generalizing those of~\cite{diaconis_freedman}, are all we require to construct finite-dimensional POP lower bounds and obtain their convergence rates (see Section~\ref{sec:free_poly_opts}).

\begin{theorem}\label{thm:DeFin_general_Zseq}
    Let $\mc F(\mscr Z) = \{(\NN,\leq),(\msf S_n),(\vct V_n),(\varphi_{N,n})\}$ be a $\mscr Z$-sequence, and let $\mc F(\mscr D) = \{(\NN,\cdot|\cdot), \allowbreak (\msf S_n), \allowbreak (\vct V_n), \allowbreak (\psi_{N,n})\}$ be the corresponding $\mscr D$-sequence. 
    Let $(\Omega_n\subseteq\vct V_n)$ be a sequence of $\msf S_n$-invariant sets satisfying $\varphi_{N,n}^\star\Omega_N\subseteq\Omega_n$ whenever $n\leq N$ and $\psi_{N,n}\Omega_n\subseteq\Omega_N$ whenever $n|N$. Consider the map from freely-symmetrized measures $\varinjlim_{\mc F(\mscr D)}\mc P(\Omega_n)^{\msf S_n}$ over $\mc F(\mscr D)$ to freely-described measures $\varprojlim_{\mc F(\mscr Z)}\mc P(\Omega_m)^{\msf S_m}$ over $\mc F(\mscr Z)$ sending
    \begin{equation}\label{eq:DeFin_gen_map}
        \left(\mathrm{sym}_N^{\mc F(\mscr D)}\mu_n\right)_{n|N} \mapsto \Big(\mathrm{Law}(\rho(F_{n,m})X)\Big)_m,
    \end{equation}
    where $X \sim \mu_n$ is independent of the uniformly random map $F_{n,m}\colon[m]\to[n]$.  This map has a dense image in total variation, meaning that for any freely-described measure $(\mu_m) \in \varprojlim_{\mc F(\mscr Z)}\mc P(\Omega_m)^{\msf S_m}$ there exists $(\mu_m^{(n)})$ in the image of \eqref{eq:DeFin_gen_map} such that $\lim_{n\to\infty}\|\mu_m-\mu_m^{(n)}\|_{\mathrm{TV}}=0$ for each $m$.  
    More precisely, for any $\mu_n\in\mc P(\vct V_n)^{\msf S_n}$ we have $\|\mathrm{Law}(\varphi_{n,m}^\star X) - \mathrm{Law}(\rho(F_{n,m})X)\|_{\mathrm{TV}}\leq \frac{m(m-1)}{n}$ where $X \sim\mu_n$ is independent of $F_{n,m}$.
    

    
\end{theorem}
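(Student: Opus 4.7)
The plan is to prove the theorem in three stages: verify well-definedness of the map~\eqref{eq:DeFin_gen_map}, establish the explicit total-variation bound by a Diaconis--Freedman style coupling, and deduce density as an immediate consequence. Throughout, the central observation is that the structural data of both consistent sequences arises as the action $\rho$ of specific set-level morphisms: $\psi_{N,n}$ is $\rho$ applied to the block surjection $d_{N,n}\colon[N]\to[n]$, while $\varphi_{N,n}^\star$ is $\rho$ of the canonical inclusion $\iota_{N,n}\colon[n]\hookrightarrow[N]$. These identifications hold for the base sequence $(\R^n)$ and persist for $\mc F(\mscr Z)$ and $\mc F(\mscr D)$ by functoriality of $\rho$, so the whole analysis reduces to manipulations with uniform random maps between finite sets.

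For well-definedness, $\msf S_m$-invariance of $\mathrm{Law}(\rho(F_{n,m})X)$ is immediate since $F_{n,m}\circ\sigma^{-1}$ is again uniform in $[m]\to[n]$ for any $\sigma\in\msf S_m$. The projection relation between the image measures at levels $m$ and $M$ follows by contravariance of $\rho$: $\varphi_{M,m}^\star\rho(F_{n,M})X=\rho(F_{n,M}\circ\iota_{M,m})X$, and the restriction $F_{n,M}\circ\iota_{M,m}$ is again uniform in $[m]\to[n]$. Independence of the chosen representative is analogous: if $Y=G\cdot\psi_{N,n}X$ with $G\in\msf S_N$ uniform and $X\sim\mu_n$, then $\rho(F_{N,m})Y=\rho(d_{N,n}\circ G^{-1}\circ F_{N,m})X$, and $d_{N,n}\circ G^{-1}\circ F_{N,m}$ is uniform in $[m]\to[n]$ because $d_{N,n}$ has equal-sized fibers, hence maps the uniform distribution on $[N]$ to the uniform distribution on $[n]$.

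For the quantitative bound I introduce a uniformly random injection $I_{n,m}\colon[m]\hookrightarrow[n]$ coupled with $F_{n,m}$ so that they agree whenever $F_{n,m}$ is injective; a union bound over collision pairs gives $\mathbb{P}(F_{n,m}\neq I_{n,m})\leq\binom{m}{2}/n$, so the coupling inequality controls $\|\mathrm{Law}(\rho(F_{n,m})X)-\mathrm{Law}(\rho(I_{n,m})X)\|_{\mathrm{TV}}$ by $m(m-1)/n$ (up to the factor in the chosen convention for total variation). Since $X$ is $\msf S_n$-invariant, $\mathrm{Law}(\rho(I_{n,m})X)$ does not depend on the particular injection chosen and equals $\mathrm{Law}(\rho(\iota_{n,m})X)$, which is precisely the freely-described projection of $\mathrm{Law}(X)$ onto $\vct V_m$. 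Density then follows immediately: for a target $(\mu_m)\in\varprojlim_{\mc F(\mscr Z)}\mc P(\Omega_m)^{\msf S_m}$, take $X^{(n)}\sim\mu_n$ and combine the projection identity $\varphi_{n,m}^\star\mu_n=\mu_m$ with the coupling bound to conclude $\|\mu_m-\mathrm{Law}(\rho(F_{n,m})X^{(n)})\|_{\mathrm{TV}}\to 0$ as $n\to\infty$ for each fixed $m$. The main technical obstacle I anticipate is verifying inductively that $\rho$ interacts coherently with the operations defining $\mc F$ (products, tensor products, symmetric powers), so that the identifications of $\psi$ with $\rho$ of surjections and of $\varphi^\star$ with $\rho$ of inclusions persist, allowing the set-level arguments above to transport uniformly to general $(\vct V_n)$.
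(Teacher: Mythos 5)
Your proof is correct and tracks the paper's argument closely: the paper routes Theorem~\ref{thm:DeFin_general_Zseq} through the more abstract Theorem~\ref{thm:DeFin_for_coFS} on $\coFS$-representations, whose proof decomposes into exactly the three stages you outline---well-definedness via the invariances of uniform random maps under pre-composition with injections and post-composition with equipartitions (Lemma~\ref{lem:unif_map_invars}), a total-variation bound comparing a uniform random map with a uniform random injection (Lemma~\ref{lem:sampling}), and density as a corollary. The one genuine difference is in how that total-variation estimate is obtained: the paper cites the Diaconis--Freedman sampling with/without replacement bound, whereas you rederive it from scratch by a coupling that forces $F_{n,m}$ and the random injection $I_{n,m}$ to agree whenever $F_{n,m}$ is injective, then applying a union bound to get $\mathbb{P}(F_{n,m}\neq I_{n,m})\leq\binom{m}{2}/n$ and the coupling inequality (with the paper's factor-of-two convention for $\|\cdot\|_{\mathrm{TV}}$) to recover $m(m-1)/n$; this is more elementary and self-contained while yielding the same constant. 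Two minor points worth flagging: (i) the ``technical obstacle'' you anticipate---verifying inductively that $\rho$ commutes with the constructions defining $\mc F$ so that $\psi_{N,n}=\rho(d_{N,n})$ and $\varphi_{N,n}^\star=\rho(\iota_{N,n})$ persist---is precisely the paper's identification $\mscr Z(\mc F(\mathtt R))=\mc F(\mscr Z)$, $\mscr D(\mc F(\mathtt R))=\mc F(\mscr D)$, which they assert as routine but which you are right to note needs checking; and (ii) the quantitative bound you prove compares $\rho(F_{n,m})X$ against $\rho(\iota_{n,m})X=\varphi_{n,m}^\star X$ (the $\mscr Z$-projection), which is the object used in both the proof of Theorem~\ref{thm:DeFin_for_coFS} and in the later application in Theorem~\ref{thm:free_sym_bds}, even though the statement of Theorem~\ref{thm:DeFin_general_Zseq} as printed writes $\psi_{n,m}^\star$---your reading is the one consistent with the surrounding text.
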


Note that the error bound above holds for any exchangeable measure on any fixed $\vct V_n$, not necessarily compactly supported.  The proof of this result is provided at the end of Section~\ref{sec:deFinetti_proofs} as a consequence of the more general Theorem~\ref{thm:DeFin_for_coFS}.  We instantiate Theorem~\ref{thm:DeFin_general_Zseq} in the setting of the original de Finetti's theorem as well as for two-dimensional arrays.
\begin{example}[Classic de Finetti]
With $\mc F=\mathrm{id}$ and $\Omega_n=\{0,1\}^n$ in Theorem~\ref{thm:DeFin_general_Zseq}, the set $\varprojlim_{\mscr Z}\mc P(\Omega_m)^{\msf S_m}$ corresponds to infinite binary exchangeable arrays, the original setting studied by de Finetti~\cite{de1929funzione}. 
Theorem~\ref{thm:DeFin_general_Zseq} states that a dense subset of such arrays is given by starting from a random exchangeable vector $X^{(n)}\in\{0,1\}^n$ and sampling its entries with replacement to get $(X_1,X_2,\ldots)$; equivalently, if $P^{(n)}\in\{0,1/n,2/n,\ldots,1\}$ is the (random) fraction of the entries of $X^{(n)}$ which equal 1, we can first sample $P^{(n)}$ and then sample $X_j$ iid from $\mathrm{Ber}(P^{(n)})$.  In comparison, de Finetti proved that all infinite binary exchangeable arrays are obtained by sampling $P\in[0,1]$ according to some distribution on $[0,1]$, and then sampling $X_j$ iid from $\mathrm{Ber}(P)$. 
Thus, Theorem~\ref{thm:DeFin_general_Zseq} yields the dense subset of these arrays for which the random $P$ is supported on $\{0,1/n,2/n,\ldots,1\}$ for some $n\in\NN$.
\end{example}


\begin{example}[Graphons and Aldous--Hoover]\label{ex:graphon_deFin} 
Let $\mc F=\mathrm{Sym}^2$ in Theorem~\ref{thm:DeFin_general_Zseq}, so $\mc F(\mscr Z)=\mathrm{Sym}^2\mscr Z$ and $\mc F(\mscr D)=\mathrm{Sym}^2\mscr D$ are the consistent sequences from Example~\ref{ex:Rdk} consisting of symmetric matrices on which permutations act by simultaneously permuting rows and columns, and with zero-padding and duplication embeddings.  Next, let $\Omega_n=\{X\in\mbb S^n(\{0,1\}):\diag(X)=0\}$ be the set of adjacency matrices of simple graphs.  An exchangeable measure on $\Omega_n$ is a random graph on $n$ vertices.  Freely-described measures on $(\Omega_n)$ are then a sequence of random graphs on increasingly many vertices closed under taking induced subgraphs, or equivalently, random graphs on countably many vertices.  Theorem~\ref{thm:DeFin_general_Zseq} states that a dense subset of such sequences of measures is obtained by starting from a fixed-dimensional random $X\in \Omega_n$ and sampling its row and column indices $i_1,i_2,\ldots$ iid from $[n]$ to obtain the infinite array $(X_{i_j,i_{j'}})_{j,j'\in\NN}$.  Equivalently, define the step-function $W_X\colon[0,1]^2\to\{0,1\}$ by first partitioning $[0,1]=\bigcup_{i=1}^nI_{i,n}$ with $I_{i,n}=[(i-1)/n,i/n)$ if $i<n$ and $I_{n,n}=[1-1/n,1]$, and setting $W_X(x,y)=X_{i,j}$ if $(x,y)\in I_{i,n}\times I_{j,n}$.  Then sampling $x_1,x_2,\ldots$ iid uniformly on $[0,1]$ and setting $X_{j,j'}=W_X(x_j,x_{j'})$ yields an infinite array with the same distribution as before.



To obtain \emph{all} freely-described measures on $(\Omega_n)$, it was shown in~\cite{lovasz2012random} that any countable random graph model $(X_{j,j'})$ is obtained from a random symmetric measurable function $W\colon[0,1]^2\to[0,1]$ called a \emph{graphon} by first sampling $x_1,x_2,\ldots$ iid uniformly from $[0,1]$ and then sampling $X_{j,j'}\sim\mathrm{Ber}(W(x_j,x_{j'}))$ iid for $j < j'$, while setting $X_{j,j'}=X_{j',j}$ for $j>j'$ and $X_{j,j}=0$. 
Thus, Theorem~\ref{thm:DeFin_general_Zseq} yields the dense subset of such arrays that can be obtained from \emph{step graphons} of the form $W_X$ for some $X\in\Omega_k$.

For more general sequences of sets $(\Omega_n\subseteq \mbb S^n)$, corresponding to weighted graphs, the characterization of all freely-described measures on $(\Omega_n)$ is even more involved, and requires random functions defined on $[0,1]^2\times[0,1]^{\binom{\NN}{2}}$, as a consequence of Aldous--Hoover--Kallenberg theory~\cite[Chap.~7]{kallenberg2005probabilistic}. Nevertheless, freely-described measures obtained from step graphons as above continue to be dense by Theorem~\ref{thm:DeFin_general_Zseq}, and the error bound there still applies.
\end{example}
Theorem~\ref{thm:DeFin_general_Zseq} gives a dense subset of freely-described measures over $\mscr Z$-sequences in terms of freely-symmetrized measures over $\mscr D$-sequences. This result allows us to construct lower bounds for freely-described problems over $\mscr D$-sequences in terms of freely-symmetrized problems over $\mscr Z$-sequences, generalizing the example in Section~\ref{sec:case_study}.  
To construct bounds for freely-described problems over $\mscr Z$-sequences, however, we need a dual de Finetti theorem.  
In more detail, we require a dense subset of freely-described measures over $\mscr D$-sequences in terms of freely-symmetrized measures over the corresponding $\mscr Z$-sequences.  
\begin{example}[Freely-described measures over $\mscr D$]\label{ex:P_fd_measures}
    A freely-described measure over $\mscr D$ corresponds to the laws of a sequence of exchangeable random vectors $(X^{(m)}\in\RR^m)_m$ satisfying
    \begin{equation}\label{eq:fd_measure_P}
        \sum_{j=1}^{M/m}X^{(M)}_{(i-1)(M/m)+j} \overset{d}{=} X^{(m)}_i,\quad \textrm{for all } m|M \textrm{ and } i \in [m].
    \end{equation}
    Examples of such sequences of random variables include the uniformly random coordinate vectors $X^{(m)}\sim\mathrm{Unif}\{e_1,\ldots,e_m\}$, the iid Gaussians $X^{(m)}\sim\mc N(0,\frac{1}{m}I_m)$, and the Dirichlet distributions $X^{(m)}\sim\mathrm{Dir}(\mathbbm{1}_m/m)$.
\end{example}
We obtain the desired `dual' version of Theorem~\ref{thm:DeFin_general_Zseq} by considering the adjoint of the action $\rho$ of maps between finite sets.
Specifically, for any map $f\colon[n]\to[m]$ we get the linear map $\rho(f)^\star\colon\RR^n\to\RR^m$ defined by $(\rho(f)^\star x)_i=\sum_{j\in f^{-1}(i)}x_j$. This extends to an action between the vector spaces constituting any $\mscr Z$- or $\mscr D$-sequence as before.
Considering the adjoint action of a uniformly random map, we obtain the following theorem. Below we endow each $\vct V_n$ with the 1-norm with respect to a particular basis constructed from the standard basis for $\RR^n$ (see Lemma~\ref{lem:ell1_ext} below).
\begin{theorem}\label{thm:dual_deFin_Pseq}
Let $\mc F(\mscr D) = \{(\NN,\cdot|\cdot), \allowbreak (\msf S_n), \allowbreak (\vct V_n), \allowbreak (\psi_{N,n})\}$ be a $\mscr D$-sequence of degree $d_{\mc F}$, and let $\mc F(\mscr Z) = \{(\NN,\leq), \allowbreak (\msf S_n), (\vct V_n),(\varphi_{N,n})\}$ be the corresponding $\mscr Z$-sequence.  Let $(\Omega_n\subseteq \vct V_n)$ be a sequence of compact $\msf S_n$-invariant sets satisfying $\psi_{N,n}^\star \Omega_N\subseteq \Omega_n$ whenever $n|N$, $\varphi_{N,n}\Omega_n\subseteq \Omega_N$ whenever $n\leq N$, and $\sup_n\sup_{x\in\Omega_n}\|x\|_1<\infty$.  
Consider the map from freely-symmetrized measures $\varinjlim_{\mc F(\mscr Z)}\mc P(\Omega_n)^{\msf S_n}$ over $\mc F(\mscr Z)$ to freely-described measures $\varprojlim_{\mc F(\mscr D)}\mc P(\Omega_m)^{\msf S_m}$ over $\mc F(\mscr D)$ sending
\begin{equation}\label{eq:dual_deFin_map}
    \left(\mathrm{sym}_N^{\mc F(\mscr Z)}\mu_n\right)_{N\geq n} \mapsto \Big(\mathrm{Law}(\rho(F_{m,n})^\star X)\Big)_m,
\end{equation}
where $X\sim\mu_n$ is independent of the uniformly random map $F_{m,n}\colon[n]\to[m]$. 
This map has a dense image in the Wasserstein distance $W_1$ with respect to an induced $\|\cdot\|_1$ norm on $(\vct V_n)$.  More precisely, for any $\mu_n\in\mc P(\vct V_n)^{\msf S_n}$ and $m|n$ we have
\begin{equation*}
        W_1\Big(\mathrm{Law}(\psi_{n,m}^\star X), \mathrm{Law}(\rho(F_{m,n})^\star X)\Big)\leq 2d_{\mc F}\sqrt{\frac{m(m-1)}{n/m}}\mbb E_{\mu_n}[\|X\|_{1}],
\end{equation*}
and
\begin{equation*}
    \|\mbb E\psi_{n,m}^\star X - \mbb E\rho(F_{m,n})^\star X\|_1\leq \frac{d_{\mc F}(d_{\mc F}-1)}{n}\mbb E_{\mu_n}\|X\|_1,
\end{equation*}
where $X \sim \mu_n$ is independent of $F_{m,n}$. 
\end{theorem}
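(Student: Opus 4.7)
The plan is to parallel the strategy for Theorem~\ref{thm:DeFin_general_Zseq} on the dual side via the adjoint action $\rho(F)^\star$. First, I would verify that~\eqref{eq:dual_deFin_map} is well-defined: the composition of two independent uniformly random maps between finite sets is itself uniformly random, so $\psi_{M,m}^\star\mathrm{Law}(\rho(F_{M,n})^\star X)=\mathrm{Law}(\rho(F_{m,n})^\star X)$ for $m\mid M\mid n$, yielding a consistent freely-described measure over $\mc F(\mscr D)$. The map descends to equivalence classes modulo $\mathrm{sym}_N^{\mc F(\mscr Z)}$ because the distribution of $F_{m,n}$ is $\msf S_n$-invariant under precomposition, and the hypothesis $\psi_{N,n}^\star\Omega_N\subset\Omega_n$ guarantees that the image is supported on the prescribed sets.

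For the quantitative $W_1$ bound, the base case $\mc F=\mathrm{id}$ (so $\vct V_n=\RR^n$ and $\psi_{n,m}^\star=\delta_{n,m}^\star$) proceeds by coupling. Given $X\sim\mu_n$ exchangeable and $F=F_{m,n}\colon[n]\to[m]$ uniform and independent of $X$, exchangeability of $X$ together with independence of $F$ implies that the vector $\rho(F)^\star X$ with entries $\sum_{j\in F^{-1}(i)}X_j$ has the same joint distribution as the contiguous-block sums $\tilde B_i=\sum_{j=c_{i-1}+1}^{c_i}X_j$ where $c_i=\sum_{k\leq i}|F^{-1}(k)|$. Comparing against $A=\psi_{n,m}^\star X$ (sum over fixed blocks $C_i$ of size $n/m$) gives $|A_i-\tilde B_i|\leq\sum_{j\in C_i\triangle[c_{i-1}+1,c_i]}|X_j|$; taking expectations, using exchangeability to replace $\E|X_j|$ by $\E\|X\|_1/n$, and applying $\E|D_i|\leq\sqrt{n(i/m)(1-i/m)}$ with $D_i=c_i-in/m$ (since $c_i\sim\mathrm{Bin}(n,i/m)$) and summing over $i$ yields the base-case bound of the desired form.

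For general $\mc F$, I would apply the analogous coupling at the tensor-product level: for $X\in(\RR^n)^{\otimes d_{\mc F}}$ jointly exchangeable, $\rho(F)^\star X$ has the same joint distribution as the sums over product blocks $\prod_k[c_{i_k-1}+1,c_{i_k}]$. The symmetric difference of two product sets satisfies the telescoping inclusion $\prod_k A_k\triangle\prod_k B_k\subset\bigcup_{\ell=1}^{d_{\mc F}}A_1\times\cdots\times A_{\ell-1}\times(A_\ell\triangle B_\ell)\times B_{\ell+1}\times\cdots\times B_{d_{\mc F}}$, and summing over the $d_{\mc F}$ coordinate positions yields the factor $2d_{\mc F}$ in the final bound. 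Cartesian-product and symmetric-power constructions reduce to this case via standard inequalities. Density of the image in $W_1$ then follows from the quantitative bound together with compactness of $\Omega_n$: for any $(\mu_m)\in\varprojlim_{\mc F(\mscr D)}\mc P(\Omega_m)^{\msf S_m}$, applying~\eqref{eq:dual_deFin_map} to the freely-symmetrized class of $\mu_n$ yields a measure whose $m$th marginal converges to $\mu_m$ in $W_1$ as $n\to\infty$.

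The main obstacle I anticipate is the tensor-product step: one must decompose the symmetric difference of product blocks so that each moved coordinate contributes only additively (yielding a linear-in-$d_{\mc F}$ factor rather than an exponential one), and leverage joint exchangeability of $X$ in a way that reduces to the base case without requiring component-wise exchangeability of tensor factors, which would not generally be available.
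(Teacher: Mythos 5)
Your proposal is correct, and the error bound does come out of the coupling you describe, but your route differs meaningfully from the paper's. The paper reduces Theorem~\ref{thm:dual_deFin_Pseq} to the more abstract Theorem~\ref{thm:dual_deFin_general} on $\FS$-representations, which is itself proved by combining two independent ingredients: Lemma~\ref{lem:W1_dist}, which builds a specific coupling of the uniformly random map $F_{m,n}$ with a uniformly random equipartition $\Psi_{m,n}$ (via a \emph{sorted}-block construction) so that $\mbb E[|F\neq\Psi|/n]\lesssim \sqrt{m(m-1)/(n/m)}$, and Lemma~\ref{lem:ell1_ext}, which establishes inductively that the extended $\ell_1$ norm is Lipschitz in the Hamming fraction $|f\neq h|/n$ with constant $2d_{\mc F}$. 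You bypass the abstraction and couple directly at the vector level: you use exchangeability of $X$ (given $F$, independently) to replace the random fibers $F^{-1}(i)$ by contiguous intervals $[c_{i-1}+1,c_i]$ of the correct conditional sizes, then bound the interval symmetric differences via the endpoint deviations $|D_i|=|c_i-in/m|$, and finally control $\mbb E|D_i|$ by the binomial variance $\mathrm{Var}(c_i)=n(i/m)(1-i/m)$. This removes the intermediary $|F\neq\Psi|$ and its sorting step, and in the base case actually gives a slightly tighter constant than the paper's. Your telescoping decomposition of product-set symmetric differences plays precisely the role of the paper's induction in Lemma~\ref{lem:ell1_ext}, and you correctly identify that it produces the linear factor $d_{\mc F}$ (because for fixed $\ell$ the multiplicity is at most $2$ rather than multiplicative). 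Both routes are sound; the paper's is modular and reusable for the general $\FS$-representation framework, whereas yours is more self-contained and elementary for this particular instantiation, at the cost of having to re-verify the ``same law as contiguous blocks'' identity separately at the tensor level.
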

Note that the error bounds in Theorem~\ref{thm:dual_deFin_Pseq} hold for any exchangeable measure on $\vct V_n$, but are stated in Wasserstein-1 distance.
Note also that we separately state an $O(1/\sqrt{n})$ bound between the full distributions of $\psi_{n,m}^\star X$ and $\rho(F_{m,n})^\star X$, and an $O(1/n)$ bound between their means. We will see in Section~\ref{sec:free_sym_bds} that the latter bound suffices to obtain tight convergence rates for our relaxations of polynomial POPs. Nevertheless, the former bound may be of independent interest, and we show in Remark~\ref{rmk:non_poly} that it gives tight rates of convergence for some non-polynomial optimization problems.
In contrast, the error bounds in Theorem~\ref{thm:DeFin_general_Zseq} are given in the larger total variation distance, and decay to zero at the rate of $1/n$, faster than the above $1/\sqrt{n}$.
We show in Sections~\ref{sec:deFinetti_coFS} and~\ref{sec:deFinetti_FS} that the use of Wasserstein-1 here is necessary, and that all the above rates are tight.  The proof of Theorem~\ref{thm:dual_deFin_Pseq} is provided at the end of Section~\ref{sec:deFinetti_proofs} as a consequence of the more general Theorem~\ref{thm:dual_deFin_general}.  We illustrate Theorem~\ref{thm:dual_deFin_Pseq} on two examples, one corresponding to the freely-described measures on $\mscr D$ from Example~\ref{ex:P_fd_measures} and the other corresponding to a dual version of Aldous--Hoover.
\begin{example}[Freely-described measures on $\mscr D$, Example~\ref{ex:P_fd_measures} continued]
    In the setting of Example~\ref{ex:P_fd_measures}, Theorem~\ref{thm:dual_deFin_Pseq} states that any sequence of random vectors satisfying~\eqref{eq:fd_measure_P} and supported on simplices $\Omega_n=\Delta^{n-1}$ can be approximated arbitrarily well by those of the form $X^{(m)}=\rho(F_{m,n})^\star(X)=\sum_{i=1}^nX_ie_{j_i}$ where $X\in\Delta^{n-1}$ is a fixed random vector supported on the simplex and $j_1,\ldots,j_n$ are sampled iid uniformly from $[m]$ (independently of $X$).
    That is, the right notion of sampling in this setting puts each coordinate of $x\in\RR^n$ independently into one of $m$ bins uniformly at random, and each coordinate of $\rho(F_{m,n})^\star x$ is obtained by summing the coordinates of $x$ in the corresponding bin.  
    While Theorem~\ref{thm:dual_deFin_Pseq} only yields a dense subset of freely-described measures over $\mscr D$ supported on $(\Delta^{n-1})$, a full characterization of all such measures can be obtained in terms of random exchangeable measures on $[0,1]$ using~\cite{kallenberg1990exchangeable, orbanz2011projective}, see~\cite{levin2025limits}. 
\end{example}
\begin{example}[Freely-described measures on $\mathrm{Sym}^2\mscr D$]
    Setting $\mc F=\mathrm{Sym}^2$ and $\Omega_n=\{X\in\mbb S^n:X_{i,j}\geq0,\ \sum_{i\leq j}X_{i,j}=1\}$ in Theorem~\ref{thm:dual_deFin_Pseq}, freely-described measures over $\mathrm{Sym}^2\mscr D$ can be approximated arbitrarily well by the following construction.  A finite random graph $X\in\Omega_n$ defines a freely-described measure over $\mathrm{Sym}^2\mscr D$ by defining the sequence of random graphs $(X^{(m)})$ via $X^{(m)}=\sum_{1\leq i\leq j\leq n}X_{i,j}E_{k_i,k_j}$ where $E_{i,j}=e_ie_j^\top + e_je_i^\top$ if $i\neq j$ while $E_{i,i}=e_ie_i^\top$, and $k_1,\ldots,k_n$ are drawn iid uniformly from $[m]$. Equivalently, $X^{(m)}$ is the random graph obtained by labeling the $n$ vertices of $X$ by uniformly random labels from $[m]$ and identifying vertices with the same labels (which might then result in self-loops and multiple edges).  
    As before, Theorem~\ref{thm:dual_deFin_Pseq} only yields a dense subset of freely-described measures over $\mathrm{Sym}^2\mscr D$ supported on $(\Omega_n)$, and a full characterization in terms of random exchangeable measures on $[0,1]^2$ can be obtained using~\cite{kallenberg1990exchangeable, orbanz2011projective}, see~\cite{levin2025limits}.
\end{example}


The key insight underlying the proofs of Theorems~\ref{thm:DeFin_general_Zseq} and~\ref{thm:dual_deFin_Pseq} is a link between actions of maps between finite sets and consistent sequences.  Specifically, the above actions of maps between finite sets recover all of the maps relating vector spaces in different dimensions that we have seen so far (e.g., zero-padding and duplication), and as a consequence, yield all $\mscr Z$- and $\mscr D$-sequences.  Leveraging this link, we reduce the proofs of the above theorems to proving bounds on random maps between finite sets.


\subsection{Freely-Described Measures Over \texorpdfstring{$\mscr Z$}{Z}-Sequences}\label{sec:deFinetti_coFS}

The aim of this section is to present a generalization of de Finetti's theorem that characterizes (a dense subset of) freely-described measures over $\mscr Z$-sequences. 
We begin by abstracting away the properties of the action $\rho$ of maps between finite sets on $(\RR^n)$ from Section~\ref{sec:deFinetti_intro}.  Note that if $f_1\colon[n]\to[m]$ and $f_2\colon[m]\to[k]$, then $\rho(f_2\circ f_1) =\rho(f_1) \circ \rho(f_2)$ and $\rho(\mathrm{id}_{[n]}) =\mathrm{id}_{\RR^n}$.  We call such an assignment of linear maps to maps between finite sets a $\coFS$-action, where `op' stands for `opposite' and pertains to the opposite order in which compositions act.\footnote{In fact, a $\coFS$-action precisely corresponds to a representation of the category $\coFS$, the opposite of the category $\FS$ whose objects are finite sets and whose morphisms are all maps between them.} 
\begin{definition}[$\coFS$-representation]\label{def:coFS_mod}
    A \emph{$\coFS$-action} on a sequence of vector spaces $(\vct V_n)_{n\in\NN}$ is an assignment of a linear map $\tau^{\mathrm{op}}(f)\colon\vct V_m\to\vct V_n$ for each map $f\colon[n]\to[m]$ between finite sets satisfying $\tau^{\mathrm{op}}(\mathrm{id}_{[n]})=\mathrm{id}_{\vct V_n}$ for all $n\in\NN$ and $\tau^{\mathrm{op}}(f_1\circ f_2)=\tau^{\mathrm{op}}(f_2)\circ \tau^{\mathrm{op}}(f_1)$ whenever the composition is well-defined.
    A sequence of vector spaces endowed with a $\coFS$-action $\mathtt{V}=\{(\vct V_n),(\tau^{\mathrm{op}})\}$ is called a \emph{$\coFS$-representation}.
\end{definition}
We denote the standard $\coFS$ representation defined earlier on $(\RR^n)$ by $\mathtt{R} = \{(\RR^n),(\rho)\}$.  Next, we illustrate how $\coFS$-representations yield consistent sequences.  Starting with the representation $\mathtt{R}$, we observe that specific choices of maps $f$ between finite sets yield many of the maps we have been using in our development.
\begin{enumerate}[font=\emph, align=left]
    \item[(Permutations)] If $\sigma\colon[n]\to[n]$ is a permutation, then $\rho(\sigma) =\sigma^{-1}$ gives the action of $\sigma^{-1}$ on $\RR^n$.
    \item[(Zero-padding)] If $\iota_{N,n}\colon[n]\hookrightarrow[N]$ is inclusion, then $\rho(\iota_{N,n}) =\zeta_{N,n}^\star $ is the projection in $\mscr Z$ extracting the first $n$ entries of a length-$N$ vector.
    \item[(Duplication)] If $d_{n,nk}\colon[nk]\to[n]$ is the standard equipartition $d_{n,nk}(k(i-1)+j) = i$ for $i\in[n], j\in[k]$, then $\rho(d_{n,nk}) =\delta_{nk,n}$ is the duplication embedding in $\mscr D$.
\end{enumerate}
Consequently, the consistent sequences $\mscr Z$ and $\mscr D$ can be derived from $\mathtt{R}$ as follows:
\begin{equation*}
\begin{aligned}
    \mscr Z = \{(\NN,\leq),(\msf S_n)_{\mscr Z},(\vct R_n),(\rho(\iota_{N,n})^\star)_{n\leq N}\} && \textrm{and}&& \mscr D = \{(\NN,\cdot|\cdot),(\msf S_n)_{\mscr D},(\vct R_n),(\rho(d_{n,N}))_{n|N}\},
\end{aligned}
\end{equation*}
where $(\msf S_n)_{\mscr Z}$ and $(\msf S_n)_{\mscr D}$ denote the chains of symmetric groups from~\eqref{eq:zero-padding} and~\eqref{eq:duplicating}, respectively.
We can generalize this observation to obtain $\mscr Z$- and $\mscr D$-sequences from any $\coFS$-representation.  Specifically, given a $\coFS$-representation $\mathtt V=\{(\vct V_n),(\tau^{\mathrm{op}})\}$, for each $n\in\NN$ we endow $\vct V_n$ with the $\msf S_n$ action $\sigma\cdot x = \tau^{\mathrm{op}}(\sigma^{-1})x$ for a permutation $\sigma\colon[n]\to[n]$ and $x\in\vct V_n$.  We then define the two consistent sequences:
\begin{equation}\label{eq:coFS_consistent_sequences}
\begin{aligned}
        \mscr Z(\mathtt V) = \{(\NN,\leq),(\msf S_n)_{\mscr Z},(\vct V_n),(\tau^{\mathrm{op}}(\iota_{N,n})^\star )_{n\leq N}\}\ \textrm{ and }\ \mscr D(\mathtt V) = \{(\NN,\cdot|\cdot),(\msf S_n)_{\mscr D},(\vct V_n),(\tau^{\mathrm{op}}(d_{n,N}))_{n|N}\}.
\end{aligned}
\end{equation}
Every $\mscr Z$- and $\mscr D$-sequence can be obtained in this way for some $\coFS$-representation.  To see this, recall that the standard constructions of Section~\ref{sec:consist_seqs}, which can be applied to $\mscr Z$ and $\mscr D$ to obtain more complicated consistent sequences, can also be applied to $\mathtt{R}$ as discussed in Section~\ref{sec:deFinetti_intro} to obtain more complicated $\coFS$-representations.  In particular, for any standard construction $\mathcal{F}$, we observe that $\mathcal{F}(\mathtt{R})$ is also a $\coFS$-representation.\footnote{We denote by the same $\mc F$ standard constructions applied to consistent sequences and to $\coFS$-representations. Formally, consistent sequences and $\coFS$-representations are functors from some category to the category of vector spaces, and we are post-composing them with a functor $\mc F$ from the category of vector spaces to itself. Informally, we can apply standard constructions $\mc F$ to any collection of vector spaces and linear maps between them.} 
Moreover, one can check that $\mscr Z(\mc F(\mathtt{R}))=\mc F(\mscr Z)$ and $\mscr D(\mc F(\mathtt{R}))=\mc F(\mscr D)$. That is, we get the same consistent sequences from $\mc F$ whether we apply it directly to $\mscr Z$ and $\mscr D$, or apply it to the $\coFS$-representation $\mathtt{R}$ and invoke \eqref{eq:coFS_consistent_sequences}.

We proceed to generalize de Finetti's theorem to free measures over consistent sequences of the form \eqref{eq:coFS_consistent_sequences} derived from any $\coFS$-representation.  The proofs of our results in this section are presented in Section~\ref{sec:deFinetti_proofs} to make the architecture of our argument more transparent.  We begin with a lemma characterizing the invariances of uniformly random maps. It will be used to argue that our sampling maps send equivalent freely-symmetrized measures to the same image, and that these images are indeed freely-described measures.
\begin{lemma}\label{lem:unif_map_invars}
    Let $F_{m,n}\colon[n]\to[m]$ be uniformly random maps for each $m,n\in\NN$. For any injective map $\phi_{n,k}\colon[k]\to[n]$ we have $F_{m,n}\circ\phi_{n,k}\overset{d}{=}F_{m,k}$, and for any equipartition $\Psi_{k,nk}\colon[nk]\to[k]$ we have $\Psi_{k,nk}\circ F_{nk,m}\overset{d}{=}F_{k,m}$ for all $k,m,n\in\NN$.
\end{lemma}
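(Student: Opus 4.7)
The plan is to unpack the definition of a uniformly random map $F_{m,n}\colon[n]\to[m]$ and verify both identities directly by computing finite-dimensional marginals. Recall that $F_{m,n}$ is characterized by the property that the values $(F_{m,n}(1),\ldots,F_{m,n}(n))$ are independent and each uniformly distributed on $[m]$. Equivalently, for any function $g\colon[n]\to[m]$, $\Pr[F_{m,n}=g]=m^{-n}$.

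For the first identity, I would argue as follows. Fix any $g\colon[k]\to[m]$ and compute $\Pr[F_{m,n}\circ\phi_{n,k}=g]$ by summing over all extensions. Since $\phi_{n,k}$ is injective, the image $\phi_{n,k}([k])\subset[n]$ has exactly $k$ elements, and the condition $F_{m,n}\circ\phi_{n,k}=g$ constrains the values of $F_{m,n}$ only on these $k$ distinct points. The $n-k$ remaining values of $F_{m,n}$ are unconstrained, so the probability is $m^{-k}$, matching $\Pr[F_{m,k}=g]$. Equivalently (and more cleanly), the coordinates $(F_{m,n}(\phi_{n,k}(1)),\ldots,F_{m,n}(\phi_{n,k}(k)))$ are $k$ distinct coordinates of the independent uniform vector $(F_{m,n}(1),\ldots,F_{m,n}(n))$, hence themselves iid uniform on $[m]$.

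For the second identity, fix $g\colon[m]\to[k]$ and compute $\Pr[\Psi_{k,nk}\circ F_{nk,m}=g]=\prod_{j=1}^m\Pr[\Psi_{k,nk}(F_{nk,m}(j))=g(j)]$ using independence across $j\in[m]$ of the coordinates of $F_{nk,m}$. For each $j$, $F_{nk,m}(j)$ is uniform on $[nk]$, so $\Psi_{k,nk}(F_{nk,m}(j))=i$ with probability $|\Psi_{k,nk}^{-1}(i)|/(nk)=n/(nk)=1/k$, using the defining property of an equipartition that every fiber has size $n$. Thus each marginal is uniform on $[k]$ and the joint law is $k^{-m}=\Pr[F_{k,m}=g]$.

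I do not anticipate any serious obstacle: the lemma is a direct unwinding of the definitions of uniformly random maps, injections, and equipartitions. The only care required is to observe that the independence of the coordinates of $F_{m,n}$ is what allows us to restrict to an injectively-indexed subset in the first claim, and that the equal-fiber-size hypothesis on $\Psi_{k,nk}$ is precisely what guarantees uniformity under pushforward in the second claim. Both are standard and can be written out in a few lines.
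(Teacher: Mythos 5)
Your proof is correct and follows essentially the same reasoning as the paper's: for the first claim, the iid-uniform coordinates of $F_{m,n}$ restrict along the distinct indices $\phi_{n,k}([k])$ to give an iid-uniform tuple; for the second, independence across $j$ of $F_{nk,m}(j)$ plus the equal fiber sizes of the equipartition yield an iid-uniform tuple on $[k]$. The marginal-probability computations you add are a harmless expansion of the same argument.
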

Next, we state the key technical lemma used in the proof of Theorem~\ref{thm:DeFin_for_coFS}, which is the quantitative version of the asymptotic equivalence of sampling with and without replacement from~\cite{sampling_tv_dist}, stated in our formalism.  
\begin{lemma}\label{lem:sampling}
    For $m\leq n$, let $F_{n,m}\colon[m]\to[n]$ be a uniformly random map, and let $\Phi_{n,m}\colon[m]\hookrightarrow [n]$ be a uniformly random \emph{injective} map. Then
    \begin{equation}\label{eq:sampling_bd}
        \|\mathrm{Law}(F_{n,m}) - \mathrm{Law}(\Phi_{n,m})\|_{\mathrm{TV}}\leq \frac{m(m-1)}{n}.
    \end{equation}
\end{lemma}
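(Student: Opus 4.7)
The plan is to couple the two distributions by conditioning on the event that $F_{n,m}$ is injective, thereby reducing the bound to a classical birthday-style estimate.

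First, I would observe that conditional on the event $E = \{F_{n,m} \text{ is injective}\}$, the random map $F_{n,m}$ is uniformly distributed over all injective maps $[m]\hookrightarrow[n]$, and hence has the same law as $\Phi_{n,m}$. This is just the elementary fact that the uniform distribution on the set of all maps $[m]\to[n]$, when conditioned on the subset of injective maps, yields the uniform distribution on that subset.

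Second, I would decompose the law of $F_{n,m}$ accordingly: writing $\nu = \mathrm{Law}(F_{n,m}\mid E^c)$, we have
\[
\mathrm{Law}(F_{n,m}) = \Pr(E)\,\mathrm{Law}(\Phi_{n,m}) + \Pr(E^c)\,\nu,
\]
so that $\mathrm{Law}(F_{n,m}) - \mathrm{Law}(\Phi_{n,m}) = \Pr(E^c)\bigl(\nu - \mathrm{Law}(\Phi_{n,m})\bigr)$. Under the paper's convention (where $\|\cdot\|_{\mathrm{TV}}$ is the $\ell^1$ distance $\sum_x |\mu(x)-\nu(x)|$, twice the standard TV), any pair of probability measures has $\|\mu-\nu\|_{\mathrm{TV}}\leq 2$. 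Hence
\[
\|\mathrm{Law}(F_{n,m}) - \mathrm{Law}(\Phi_{n,m})\|_{\mathrm{TV}} \leq 2\Pr(E^c).
\]

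Finally, I would bound $\Pr(E^c)$ via the union bound and the observation that for any $i\neq j$ in $[m]$, the pair $(F_{n,m}(i),F_{n,m}(j))$ is uniform on $[n]^2$, so $\Pr(F_{n,m}(i)=F_{n,m}(j)) = 1/n$. Therefore
\[
\Pr(E^c) = \Pr\!\left(\bigcup_{1\leq i<j\leq m}\{F_{n,m}(i)=F_{n,m}(j)\}\right) \leq \binom{m}{2}\cdot\frac{1}{n} = \frac{m(m-1)}{2n}.
\]
Combining the two displayed inequalities yields $\frac{m(m-1)}{n}$, as required. The argument is essentially routine; there is no real obstacle, only two points worth checking with care: the conditional-law identity in the first step, and the fact that the paper's $\|\cdot\|_{\mathrm{TV}}$ is normalized as an $\ell^1$ norm (so that the trivial upper bound is $2$ rather than $1$).
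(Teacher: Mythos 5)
Your proof is correct, and it supplies a self-contained argument where the paper simply cites a known estimate. Specifically, the paper identifies $F_{n,m}$ and $\Phi_{n,m}$ with sampling from $[n]$ with and without replacement and invokes \cite[Eq.~(2.3)]{sampling_tv_dist} for the bound, whereas you derive it directly. Your conditioning step is sound: because $\mathrm{Law}(F_{n,m})$ is uniform on all maps and $E$ is the event of injectivity, the conditional law on $E$ is uniform on injective maps, i.e.\ equal to $\mathrm{Law}(\Phi_{n,m})$; the resulting decomposition $\mathrm{Law}(F_{n,m})-\mathrm{Law}(\Phi_{n,m})=\Pr(E^c)\bigl(\nu-\mathrm{Law}(\Phi_{n,m})\bigr)$ combined with the paper's $\ell^1$ normalization (so $\|\mu-\nu\|_{\mathrm{TV}}\leq 2$) and the birthday bound $\Pr(E^c)\leq\binom{m}{2}/n$ gives exactly the stated $m(m-1)/n$. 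One small observation you could add: since $\nu$ is supported on non-injective maps and $\mathrm{Law}(\Phi_{n,m})$ on injective ones, their supports are disjoint, so $\|\nu-\mathrm{Law}(\Phi_{n,m})\|_{\mathrm{TV}}=2$ exactly and your bound is $\|\mathrm{Law}(F_{n,m})-\mathrm{Law}(\Phi_{n,m})\|_{\mathrm{TV}}=2\Pr(E^c)$ with equality; only the birthday step introduces slack. What your route buys is transparency and no external dependence; what the paper's route buys is brevity. Both are fine.
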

The above total variation bound applies to any pushforward of the random maps $F_{n,m}$ and $\Phi_{n,m}$, and in particular applies to their linear representatives acting on a $\coFS$-representation, so that $\|\mathrm{Law}(\tau^{\mathrm{op}}(F_{n,m}))-\mathrm{Law}(\tau^{\mathrm{op}}(\Phi_{n,m}))\|_{\mathrm{TV}}\leq\frac{m(m-1)}{n}$.  This allows us to prove the following theorem, generalizing de Finetti to consistent sequences derived from any $\coFS$-representation.
\begin{theorem}[De Finetti for $\coFS$-representations]\label{thm:DeFin_for_coFS}
    Let $\mathtt V=\{(\vct V_n),(\tau^{\mathrm{op}})\}$ be a $\coFS$ representation, and let $(\Omega_n)$ satisfy $\tau^{\mathrm{op}}(f)\Omega_n\subseteq \Omega_m$ for all $f\colon[m]\to[n]$. Consider the map from $\varinjlim_{\mscr D(\mathtt V)}\mc P(\Omega_n)^{\msf S_n}$ to $\varprojlim_{\mscr Z(\mathtt V)}\mc P(\Omega_m)^{\msf S_m}$ sending
    \begin{equation}
        \left(\mathrm{sym}_N^{\mscr D(\mathtt V)}\mu_n\right)_{n|N}\mapsto \Big(\mathrm{Law}(\tau^{\mathrm{op}}(F_{n,m})X)\Big)_m,
    \end{equation}
    where $X\sim\mu_n$ is independent of the uniformly random map $F_{n,m}\colon[m]\to[n]$. This map has a dense image in total variation. 
    More precisely, for any $\mu_n\in\mc P(\vct V_n)^{\msf S_n}$ we have 
    \begin{equation*}
        \|\mathrm{Law}(\tau^{\mathrm{op}}(\iota_{n,m})X) - \mathrm{Law}(\tau^{\mathrm{op}}(F_{n,m})X)\|_{\mathrm{TV}}\leq \frac{m(m-1)}{n},
    \end{equation*}
    where $X\sim\mu_n$ is independent of $F_{n,m}$.
\end{theorem}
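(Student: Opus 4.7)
The plan is to verify four claims in sequence: \textbf{(i)} the map is well-defined on equivalence classes of freely-symmetrized measures, \textbf{(ii)} its image lands in $\varprojlim_{\mscr Z(\mathtt V)}\mc P(\Omega_m)^{\msf S_m}$, \textbf{(iii)} the quantitative total variation bound holds, and \textbf{(iv)} density follows as an immediate consequence of (iii). Throughout, I would repeatedly invoke the contravariance relation $\tau^{\mathrm{op}}(f_1\circ f_2)=\tau^{\mathrm{op}}(f_2)\circ\tau^{\mathrm{op}}(f_1)$ together with the identification $\sigma\cdot x=\tau^{\mathrm{op}}(\sigma^{-1})x$ for a permutation $\sigma$, along with the two halves of Lemma~\ref{lem:unif_map_invars}.

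For (i), I would take two representatives $\mu_n$ and $\mu_N=\mathrm{sym}_N^{\mscr D(\mathtt V)}\mu_n$ of the same equivalence class (with $n\mid N$) and show they produce identical images at each level $m$. Sampling $Y\sim\mu_N$ as $Y=\tau^{\mathrm{op}}(d_{n,N}\circ G^{-1})X$ with $X\sim\mu_n$ and $G\sim\mathrm{Haar}(\msf S_N)$ chosen independently, contravariance yields $\tau^{\mathrm{op}}(F_{N,m})Y=\tau^{\mathrm{op}}(d_{n,N}\circ G^{-1}\circ F_{N,m})X$. Since post-composing a uniform map with a random bijection stays uniform, $G^{-1}\circ F_{N,m}\overset{d}{=}F_{N,m}$, and then the second half of Lemma~\ref{lem:unif_map_invars} gives $d_{n,N}\circ F_{N,m}\overset{d}{=}F_{n,m}$, matching the image obtained from $\mu_n$ directly. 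The checks in (ii) use the same lemma: $\msf S_m$-invariance follows from $F_{n,m}\circ\sigma^{-1}\overset{d}{=}F_{n,m}$ for $\sigma\in\msf S_m$, and projection consistency in $\mscr Z(\mathtt V)$---whose projection maps are $\tau^{\mathrm{op}}(\iota_{M,m})$---follows from the first half of Lemma~\ref{lem:unif_map_invars}, which gives $F_{n,M}\circ\iota_{M,m}\overset{d}{=}F_{n,m}$. Support in $\Omega_m$ is immediate from the hypothesis $\tau^{\mathrm{op}}(f)\Omega_n\subset\Omega_m$.

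The crux is (iii), where Lemma~\ref{lem:sampling} enters. The essential step is to replace the deterministic inclusion $\iota_{n,m}$ by a uniformly random injection $\Phi_{n,m}$ using the $\msf S_n$-invariance of $\mu_n$: for $\sigma\sim\mathrm{Haar}(\msf S_n)$ independent of $X\sim\mu_n$, we have $X\overset{d}{=}\tau^{\mathrm{op}}(\sigma^{-1})X$, so $\tau^{\mathrm{op}}(\iota_{n,m})X\overset{d}{=}\tau^{\mathrm{op}}(\sigma^{-1}\circ\iota_{n,m})X=\tau^{\mathrm{op}}(\Phi_{n,m})X$ because $\sigma^{-1}\circ\iota_{n,m}$ ranges uniformly over injections $[m]\hookrightarrow[n]$. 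Applying total variation's contractivity under the measurable map $(f,x)\mapsto\tau^{\mathrm{op}}(f)x$ to the independent products $(\Phi_{n,m},X)$ and $(F_{n,m},X)$ reduces the bound to $\|\mathrm{Law}(\Phi_{n,m})-\mathrm{Law}(F_{n,m})\|_{\mathrm{TV}}\leq\frac{m(m-1)}{n}$, which is Lemma~\ref{lem:sampling}. For (iv), observe that any freely-described $(\mu_m)$ satisfies $\mu_m=\tau^{\mathrm{op}}(\iota_{n,m})_\#\mu_n$ for all $n\geq m$, so (iii) shows that $(\mathrm{Law}(\tau^{\mathrm{op}}(F_{n,m})X))_m$---which is the image of the equivalence class of $\mu_n$ under the map~\eqref{eq:DeFin_gen_map}---converges to $(\mu_m)$ in total variation at each fixed $m$ as $n\to\infty$.

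The main obstacle I expect is step (i): the bookkeeping of the random composition $d_{n,N}\circ G^{-1}\circ F_{N,m}$, where one must carefully track that $\tau^{\mathrm{op}}$ reverses the order of composition and then apply the two halves of Lemma~\ref{lem:unif_map_invars} in the correct order. Once this invariance-of-uniform-maps principle is set up correctly, the remaining arguments for (ii)--(iv) are essentially automatic combinations of the same identities with Lemma~\ref{lem:sampling}.
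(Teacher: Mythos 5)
Your proposal is correct and follows essentially the same route as the paper's proof: it establishes well-definedness and freely-described membership of the image via the invariances of uniformly random maps in Lemma~\ref{lem:unif_map_invars}, reduces the quantitative bound to Lemma~\ref{lem:sampling} by replacing $\iota_{n,m}$ with a uniformly random injection $\Phi_{n,m}$ through $\msf S_n$-invariance of $\mu_n$ and contractivity of total variation under pushforwards, and deduces density immediately. The only cosmetic difference is that you sample $Y\sim\mu_N$ explicitly in step (i) where the paper manipulates the operator $\tau^{\mathrm{op}}(F_{nk,m})\,\mathrm{sym}_{\msf S_{nk}}\,\tau^{\mathrm{op}}(d_{n,nk})$ directly, but these are the same computation.
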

The condition that $\tau^{\mathrm{op}}(f)\Omega_n\subseteq \Omega_m$ for all $f\colon[m]\to[n]$ in Theorem~\ref{thm:DeFin_for_coFS} is equivalent to the sets $(\Omega_n)$ satisfying $(i)$ each $\Omega_n$ is $\msf S_n$-invariant; $(ii)$ $\tau^{\mathrm{op}}(\iota_{N,n})\Omega_N\subseteq\Omega_n$ for $n\leq N$; and $(iii)$ $\tau^{\mathrm{op}}(d_{n,N})\Omega_n\subseteq\Omega_N$ for $n|N$.  In other words, such sequences of sets $(\Omega_n)$ are precisely the ones which are group-invariant, embed into each other in $\mscr D(\mathtt V)$, and project into each other in $\mscr Z(\mathtt V)$.  This is a consequence of the following lemma, showing that any map between finite sets factorizes into an injection and an equipartition.
\begin{lemma}[Factorization of maps between finite sets]\label{lem:factorization_of_maps}
    For any map $f\colon[m]\to[n]$ and any $k\geq \max_{i\in[n]}|f^{-1}(i)|$, there exists $g\in\msf S_{nk}$ satisfying $f = d_{n,nk}g\iota_{nk,m}$.
\end{lemma}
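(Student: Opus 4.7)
The plan is to unpack the identity $f = d_{n,nk}\circ g\circ \iota_{nk,m}$ pointwise and then construct $g$ directly by a two-step combinatorial argument: first define $g$ on the image of $\iota_{nk,m}$ (that is, on $\{1,\ldots,m\}\subset[nk]$) using the hypothesis on $k$, and then extend it arbitrarily to a permutation of $[nk]$.

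First I would observe that, writing $B_i = d_{n,nk}^{-1}(i) = \{k(i-1)+1,\ldots,k(i-1)+k\}$ for the $i$th block of $[nk]$, the required identity is equivalent to asking that $g(j)\in B_{f(j)}$ for every $j\in[m]$ (with $j$ viewed inside $[nk]$ via $\iota_{nk,m}$). In other words, $g$ must map each fiber $f^{-1}(i)\subset[m]$ injectively into the block $B_i\subset[nk]$.

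Next I would construct such a map on $[m]$. Fix $i\in[n]$ and let $f^{-1}(i) = \{j^{(i)}_1 < \cdots < j^{(i)}_{m_i}\}$, where $m_i := |f^{-1}(i)|\le k$ by hypothesis. Define $g_0\colon[m]\to[nk]$ by $g_0(j^{(i)}_s) := k(i-1)+s$ for each $s\in\{1,\ldots,m_i\}$ and each $i\in[n]$. By construction $g_0$ is injective, its image in each block $B_i$ is the initial segment of size $m_i$, and $d_{n,nk}\circ g_0 = f$.

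Finally I would extend $g_0$ to a permutation of $[nk]$. Since $g_0$ is injective, its image has size $m$, so both $[nk]\setminus\{1,\ldots,m\}$ and $[nk]\setminus g_0([m])$ have size $nk-m$; choose any bijection $h$ between them and define
\begin{equation*}
    g(j) = \begin{cases} g_0(j), & j\in\{1,\ldots,m\},\\ h(j), & j\in[nk]\setminus\{1,\ldots,m\}.\end{cases}
\end{equation*}
Then $g\in\msf S_{nk}$ and $d_{n,nk}\circ g\circ \iota_{nk,m} = d_{n,nk}\circ g_0 = f$, as required. There is no real obstacle here: the only point to be careful about is that the hypothesis $k\ge\max_i|f^{-1}(i)|$ is exactly what guarantees each fiber fits into its target block, which is what makes the initial injective assignment $g_0$ possible.
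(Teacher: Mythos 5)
Your proof is correct and follows essentially the same route as the paper's: your injection $g_0(j^{(i)}_s)=k(i-1)+s$ is precisely the paper's $\phi(n_{i,\ell})=(i-1)k+\ell$, and both proofs then extend this injection on $[m]$ to a permutation of $[nk]$ (you spell out the extension step slightly more explicitly via a bijection between the complements, but the content is the same).
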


Finally, we conclude this section with an example illustrating that the error bounds obtained in Theorem~\ref{thm:DeFin_for_coFS} are tight.

\begin{example}[Tightness of rates in Theorem~\ref{thm:DeFin_for_coFS}] To see that the $1/n$ rate in Theorem~\ref{thm:DeFin_for_coFS} is tight, let $X \sim\mathrm{Ber}(1/2)^{\otimes n}$, and note that $\zeta_{n,2}^\star X \sim\mathrm{Ber}(1/2)^{\otimes 2}$ while $\rho(F_{n,2})X \sim(1-1/n)\mathrm{Ber}(1/2)^{\otimes 2} + (1/n)\mathrm{Unif}\{(0,0)^\top,(1,1)^\top\}$, since with probability $1-1/n$ the map $\rho(F_{n,2})$ picks out two distinct coordinates from $X$, while with probability $1/n$ the two coordinates are equal and equally likely to be 0 or 1. The total variation distance between the two laws is therefore $1/n$.
\end{example}



\subsection{Freely-Described Measures Over \texorpdfstring{$\mscr D$}{D}-Sequences}\label{sec:deFinetti_FS}
In this section we present our dual de Finetti theorem giving a dense subset of freely-described measures over $\mscr D$-sequences in terms of freely-symmetrized measures over $\mscr Z$-sequences.  To this end, we consider the adjoint action $\rho(f)^\star$ of maps between finite sets.  This adjoint action preserves the order of compositions, so $\rho(f_1\circ f_2)^\star=\rho(f_1)^\star\circ \rho(f_2)^\star$ and once again we have $\rho(\mathrm{id}_{[n]})^\star=\mathrm{id}_{\vct V_n}$. We call such an assignment of linear maps to maps between finite sets a $\FS$-representation.
\begin{definition}[$\FS$-representation] \label{def:FS_mod}
    A \emph{$\FS$-action} on a sequence of vector spaces $(\vct V_n)_{n\in\NN}$ is an assignment of a linear map $\tau(f)\colon\vct V_n\to\vct V_m$ for each map $f\colon[n]\to[m]$ between finite sets satisfying $\tau(\mathrm{id}_{[n]})=\mathrm{id}_{\vct V_n}$ and $\tau(f_1\circ f_2)=\tau(f_1)\circ \tau(f_2)$ whenever the composition is well-defined. A sequence of vector spaces endowed with a $\FS$-action $\mathtt V=\{(\vct V_n),(\tau)\}$ is called a \emph{$\FS$-representation}.
\end{definition}
As the adjoint $\rho^\star$ of the standard $\coFS$-action $\rho$ is a $\FS$-action, we denote the standard $\FS$-representation on $(\RR^n)$ from Section~\ref{sec:deFinetti_intro} by $\mathtt R^\star = \{(\RR^n),(\rho^\star)\}$.  More broadly, note that if $\mathtt V = \{(\vct V_n), (\tau)\}$ is a $\FS$-representation, then $\mathtt V^\star = \{(\vct V_n), (\tau^\star)\}$ is a $\coFS$-representation (and vice versa), where $\tau^\star$ is the adjoint of $\tau$. 

We proceed to state our dual de Finetti theorem. The architecture of the proof is similar to Section~\ref{sec:deFinetti_coFS}, which we again highlight by deferring the proofs and some technical lemmas to Section~\ref{sec:deFinetti_proofs} below.
The following is the key result we will need, which is the analog of Lemma~\ref{lem:sampling}.
\begin{lemma}\label{lem:W1_dist}
    Let $F_{m,n}\colon[n]\to[m]$ be a uniformly random map and let $\Psi_{m,n}\colon[n]\to[m]$ be a uniformly random equipartition, for any $m|n$. Then
    \begin{equation}
        W_1(\mathrm{Law}(F_{m,n}), \mathrm{Law}(\Psi_{m,n})) = \inf_{\substack{(F,\Psi) \textrm{ random}\\F\overset{d}{=} F_{m,n},\ \Psi\overset{d}{=}\Psi_{m,n}}}\mbb E\left[\frac{|F\neq\Psi|}{n}\right] \leq \sqrt{\frac{m(m-1)}{n/m}},
    \end{equation}
    where $|F\neq\Psi|=|\{i\in[n]:F(i)\neq\Psi(i)\}|$. Furthermore, for any $S\subseteq[n]$ we have 
    \begin{equation}\label{eq:TV_dist_restriction}
        \|\mathrm{Law}(F_{m,n}|_S) - \mathrm{Law}(\Psi_{m,n}|_S)\|_{\mathrm{TV}}\leq \frac{|S|(|S|-1)}{n}.
    \end{equation}
    
\end{lemma}
Lemma~\ref{lem:W1_dist} gives a weaker bound than Lemma~\ref{lem:sampling}. Indeed, we only get an $n^{-1}$ rate when we restrict the maps involved to constant-sized subsets. In contrast, when comparing the entire random maps involved not only do we get an $n^{-1/2}$ rate here as opposed to $n^{-1}$ in Lemma~\ref{lem:sampling}, but also the present bound is in Wasserstein distance whereas the previous one is in total variation. 
In fact, we have $\|\mathrm{Law}(F_{m,n})-\mathrm{Law}(\Psi_{m,n})\|_{\mathrm{TV}}\not\to0$ as $n/m\to\infty$, since the distribution of the fiber sizes of the two maps satisfy 
\begin{equation}\label{eq:multinom_vs_delta}
\left\|\mathrm{Multinomial}(n,m,\mathbbm{1}_m/m)-\delta_{(n/m)\mathbbm{1}_m}\right\|_{\mathrm{TV}}=2\left(1-m^{-n}\binom{n}{n/m,\ldots,n/m}\right)\to2.
\end{equation}
Thus, the two random maps in Lemma~\ref{lem:W1_dist} are only close when compared in a specific metric between such maps, namely, $\mathrm{dist}(f,\psi)=|f\neq\psi|/n$, or when restricting them to constant-sized subsets. 
As a consequence, our dual de Finetti theorem only applies to $\FS$-representations on which the $\FS$ action is appropriately local and continuous. The following theorem is the $\FS$ analogue of Theorem~\ref{thm:DeFin_for_coFS}.
\begin{theorem}[Dual de Finetti for $\FS$-representations]\label{thm:dual_deFin_general}
    Let $\mathtt V=\{(\vct V_n),(\tau)\}$ be a $\FS$-representation. Suppose that there is a basis $\{e_{\alpha}\}_{\alpha\in\mc A_n}$ for each $\vct V_n$ and subsets $\{S_{\alpha}\subseteq[n]\}_{\alpha\in\mc A_n}$ of sizes $|S_{\alpha}|\leq d$ such that $\tau(f)e_{\alpha}=\tau(h)e_{\alpha}$ if $f|_{S_{\alpha}} = h|_{S_{\alpha}}$ for all $f,h\colon[n]\to[m]$. Endow each $\vct V_n$ with the $\ell_1$ norm with respect to this basis, and suppose $\|\tau(f)e_{\alpha}\|_1\leq 1$ for all $f\colon[n]\to[m]$ and all $\alpha\in\mc A_n$.
    Let $(\Omega_n)$ be a sequence of compact sets satisfying $\tau(f)\Omega_m\subseteq \Omega_n$ for all $f\colon[m]\to[n]$ and $\sup_n\sup_{x\in\Omega_n}\|x\|_1<\infty$.
    Consider the map from $\varinjlim_{\mscr Z(\mathtt V^\star)}\mc P(\Omega_n)^{\msf S_n}$ to $\varprojlim_{\mscr D(\mathtt V^\star)}\mc P(\Omega_m)^{\msf S_m}$ sending
    \begin{equation}\label{eq:FS_defin_map_dense}
         (\mathrm{sym}_N^{\mscr Z(\mathtt V^\star)}\mu_n)_{N\geq n} \mapsto \Big(\mathrm{Law}(\tau(F_{m,n})X)\Big)_m
    \end{equation}
    where $X\sim\mu_n$ is independent of the uniformly random map $F_{m,n}\colon[n]\to[m]$. This map has a dense image in $W_1$-distance with respect to $\|\cdot\|_1$. 
    More precisely, for any $\mu_{n}\in\mc P(\vct V_{n})^{\msf S_{n}}$ and any $m|n$ we have 
    \begin{equation}\label{eq:W1_bound_FS}
        W_1(\mathrm{Law}(\tau(d_{m,n})X), \mathrm{Law}(\tau(F_{m,n})X))\leq 2d\sqrt{\frac{m(m-1)}{n/m}}\mbb E_{\mu_n}[\|X\|_1],
    \end{equation}
    and 
    \begin{equation}\label{eq:mean_bound_FS}
        \|\mbb E\tau(d_{m,n})X - \mbb E\tau(F_{m,n})X\|_1 \leq \frac{d(d-1)}{n}\mbb E_{\mu_n}[\|X\|_1],
    \end{equation}
    where $X\sim\mu_n$ is independent of $F_{m,n}$.
\end{theorem}
Once again, Lemma~\ref{lem:factorization_of_maps} shows that the condition that $\tau(f)\Omega_m\subseteq \Omega_n$ for all $f\colon[m]\to[n]$ in Theorem~\ref{thm:dual_deFin_general} is equivalent to the sets $(\Omega_n)$ being $\msf S_n$-invariant, embedding into each other in $\mscr Z(\mathtt V^\star)$, and projecting into each other in $\mscr D(\mathtt V^\star)$.

To apply Theorem~\ref{thm:dual_deFin_general} to relate freely-symmetrized measures on $\mscr Z$-sequences to freely-described measures on the corresponding $\mscr D$-sequences, we require a choice of basis satisfying the above hypotheses. 
The following lemma shows that if $\mc F$ has degree $d_{\mc F}$, then these extended bases for $\mc F(\mathtt{R})^\star$ constructed at the end of Section~\ref{sec:intro} satisfy the hypotheses of Theorem~\ref{thm:dual_deFin_general} with $d=d_{\mc F}$. 
\begin{lemma}\label{lem:ell1_ext}
    Let $\mc F(\mathtt R)^\star = \{(\vct V_n), (\rho^\star)\}$ for a standard construction $\mc F$ of degree $d_{\mc F}$, and let $\rho^\star$ be the induced $\FS$-action.  Let $\{e_{\alpha}\}_{\alpha\in\mc A_n}$ be a basis for $(\vct V_n)$ that is extended from the canonical one for $\RR^n$.  Then this basis satisfies the hypotheses of Theorem~\ref{thm:dual_deFin_general} with $d=d_{\mc F}$. 
\end{lemma}

Combining Lemma~\ref{lem:ell1_ext} with Theorem~\ref{thm:dual_deFin_general}, we obtain Theorem~\ref{thm:dual_deFin_Pseq} giving a dense subset of freely-described measures over any $\mscr D$-sequence in terms of freely-symmetrized measures over the corresponding $\mscr Z$-sequences.  
We end with examples illustrating that both rates are tight.

\begin{example}[Tightness of rates in Theorem~\ref{thm:dual_deFin_general}]
To see that the use of the Wasserstein distance is necessary and that the rate of $1/\sqrt{n}$ in~\eqref{eq:W1_bound_FS} is sharp, let $X_{2n}=\frac{1}{2n}\mathbbm{1}_{2n}$ be a deterministic invariant vector. Then $\delta_{2n,2}^\star X_{2n} = \frac{1}{2}\mathbbm{1}_2$ while $\rho(F_{2,2n})^\star X_{2n} = \left(\frac{B}{2n},1-\frac{B}{2n}\right)^\top$ where $B\sim\mathrm{Bin}(2n,1/2)$. Then their laws do not converge in total variation by~\eqref{eq:multinom_vs_delta}, while the $W_1$ distance between their laws is $\frac{1}{n}\mbb E|B-n|\sim 1/\sqrt{\pi n}$ by~\cite{Blyth01081980}. In this case $d_{\mc F}=1$ and $\mbb E\rho(F_{2,2n})^\star X_{2n} = \mbb E\rho(\Psi_{2,2n})^\star X_{2n} = \frac{1}{2}\mathbbm{1}_2$. 
To see that the rate of $1/n$ in~\eqref{eq:mean_bound_FS} is sharp, let $X_{2n}=\frac{1}{(2n)^2}\mathbbm{1}_{2n}\mathbbm{1}_{2n}^\top\in\RR^{2n\times 2n}$ and note that $\|\mbb E\rho(F_{2,2n})^\star X_{2n} - \mbb E\rho(\Psi_{2,2n})^\star X_{2n}\|_1 = \frac{1}{2n}$ using the full entrywise $\ell_1$ norm.
\end{example}

\subsection{Proofs for Section~\ref{sec:deFinetti}}\label{sec:deFinetti_proofs}
We now prove the results stated earlier in the section, as well as a few additional technical lemmas needed for these proofs.

\paragraph{Proofs for Section~\ref{sec:deFinetti_coFS}:}
We begin by proving the invariances of a uniformly random map.
\begin{proof}[Proof (Lemma~\ref{lem:unif_map_invars}).]
    Note that $F_{m,n}(i)$ is a uniformly random element of $[m]$, independent for different $i$. If $\phi_{n,k}$ is injective, then each $i\in[k]$ is mapped under $F_{m,n}\circ\phi_{n,k}$ to an independent and uniformly random element of $[m]$, proving the first claim. 
    If $\psi_{k,nk}$ is an equipartition, then a uniformly random element of $[nk]$ is equally likely to fall in each of the equally-sized fibers of $\psi_{k,nk}$. Therefore, each $i\in[m]$ is mapped by $\psi_{k,nk}F_{nk,m}$ to an independent and uniformly random element of $[k]$, proving the second claim.
\end{proof}
Next, we prove the total variation bound between random maps and random injections.
\begin{proof}[Proof (Lemma~\ref{lem:sampling}).]
    Identify a map $[m]\to[n]$ as an element of $[n]^m$. Then $F_{n,m}$ corresponds to the element of $[n]^m$ obtained by sampling $m$ elements from $[n]$ uniformly with replacement, while $\Phi_{n,m}$ corresponds to sampling without replacement. The total variation between them is bounded by~\eqref{eq:sampling_bd} in~\cite[Eq.~(2.3)]{sampling_tv_dist}.
\end{proof}
We use the above lemma to prove our generalized de Finetti theorem for $\coFS$-representations.
\begin{proof}[Proof (Theorem~\ref{thm:DeFin_for_coFS}).]
    First, two measures defining the same freely-symmetrized element have the same images under the given map since $\tau^{\mathrm{op}}(F_{nk,m})\mathrm{sym}_{\msf S_{nk}}\tau^{\mathrm{op}}(d_{n,nk}) \allowbreak = \allowbreak \frac{1}{|\msf S_{nk}|}\sum_{g\in\msf S_{nk}}\tau^{\mathrm{op}}(d_{n,nk}gF_{nk,m}) \allowbreak \overset{d}{=} \allowbreak \tau^{\mathrm{op}}(F_{n,m})$ by Lemma~\ref{lem:unif_map_invars}. 
    Similarly, the images of the given map are indeed freely-described since $\tau^{\mathrm{op}}(\iota_{M,m})\tau^{\mathrm{op}}(F_{n,M})=\tau^{\mathrm{op}}(F_{n,M}\iota_{M,m})\overset{d}{=}\tau^{\mathrm{op}}(F_{n,m})$ by the same lemma.
    
    Second, we prove the given total variation estimate for a fixed-dimensional exchangeable measure. 
    To this end, if $\Phi_{n,m}\colon[m]\to[n]$ is a uniformly random injection, note that $\Phi_{n,m} \overset{d}{=} G\iota_{n,m}$ where $G\sim\mathrm{Haar}(\msf S_n)$. Therefore, if $\mu_n\in\mc P(\Omega_n)^{\msf S_n}$ is $\msf S_n$-invariant and $X_n\sim\mu_n$ is independent of $\Phi_{n,m}$, then $\tau^{\mathrm{op}}(\Phi_{n,m})X_n \overset{d}{=} \tau^{\mathrm{op}}(\iota_{n,m})X_n$. 
    Since $\mathrm{Law}(\tau^{\mathrm{op}}(\Phi_{n,m})X_n)$ and $\mathrm{Law}(\tau^{\mathrm{op}}(F_{n,m})X_n)$ are pushforwards of $\mathrm{Law}(\Phi_{n,m})\otimes\mu_n$ and $\mathrm{Law}(F_{n,m})\otimes\mu_n$, respectively, and since the total variation distance between pushforwards can only decrease, Lemma~\ref{lem:sampling} gives
    \begin{equation}
        \|\mathrm{Law}(\tau^{\mathrm{op}}(\Phi_{n,m})X_n) - \mathrm{Law}(\tau^{\mathrm{op}}(F_{n,m})X_n)\|_{\mathrm{TV}}\leq \|\mathrm{Law}(\Phi_{n,m})-\mathrm{Law}(F_{n,m})\|_{\mathrm{TV}}\leq\frac{m(m-1)}{n},
    \end{equation}
    as claimed.
    
    Finally, we show that the image of the given map is dense in total variation. Let $(\mu_m)\in\varprojlim_{\mscr Z(\mathtt V)}\mc P(\Omega_m)^{\msf S_m}$. For each $n\in\NN$ consider the image of $(\mathrm{sym}_N^{\mscr D(\mathtt V)}\mu_n)_{n|N}\in\varinjlim_{\mscr D(\mathtt V)}\mc P(\Omega_n)^{\msf S_n}$ under the given map, which yields the sequence of freely-described measures $(\mu_m^{(n)}=\mathrm{Law}(\tau^{\mathrm{op}}(F_{n,m})X_n))_m$ where $X_n\sim\mu_n$ is independent of $F_{n,m}$.
    Since $(\mu_m)$ is freely-described over $\mscr Z(\mathtt V)$, for each $n\geq m$ we have $\mu_m=\tau^{\mathrm{op}}(\iota_{n,m})\mu_n$. Therefore, for each fixed $m\in\NN$ we have
    \begin{equation}
        \|\mu_m^{(n)}-\mu_m\|_{\mathrm{TV}} = \|\mathrm{Law}(\tau^{\mathrm{op}}(F_{n,m})X_n) - \mathrm{Law}(\tau^{\mathrm{op}}(\iota_{n,m})X_n)\|_{\mathrm{TV}}\leq \frac{m(m-1)}{n}\xrightarrow{n\to\infty}0.
    \end{equation}
    This yields the claimed density.
\end{proof}

To obtain Theorem~\ref{thm:DeFin_general_Zseq} from Theorem~\ref{thm:DeFin_for_coFS}, we need to reformulate the condition that $\tau^{\mathrm{op}}(f)\Omega_n\subseteq\Omega_m$ for all $f\colon[m]\to[n]$. This was done above using Lemma~\ref{lem:factorization_of_maps} which we now prove.
\begin{proof}[Proof (Lemma~\ref{lem:factorization_of_maps}).]
    Denote the fibers of $f$ by $f^{-1}(i) = \{n_{i,1}<\ldots<n_{i,k_i}\}$ for $i\in[n]$. Since $k\geq\max_ik_i$ we have $m=\sum_{i=1}^nk_i\leq kn$. Define the injection $\phi\colon[m]\hookrightarrow [nk]$ by $\phi(n_{i,\ell}) = (i-1)k + \ell$, which is well-defined since $1\leq \ell\leq k_i\leq k$. Since $\phi$ is injective, there exists $g\in\msf S_{nk}$ satisfying $\phi=g\iota_{nk,m}$, and by construction $f=d_{n,nk}\circ \phi=d_{n,nk}g\iota_{nk,m}$.
\end{proof}

\paragraph{Proofs for Section~\ref{sec:deFinetti_FS}:} 
We begin by proving the Wasserstein bound between random maps and random equipartition.
\begin{proof}[Proof (Lemma~\ref{lem:W1_dist}).]
    Consider the following coupling of $F_{m,n}$ and $\Psi_{m,n}$. Let $(N_1,\ldots,N_m)\sim\mathrm{Multinomial}(n,m,\mathbbm{1}_m/m)$ be a sample from a multinomial distribution. Let $N_{(1)}\geq\ldots\geq N_{(m)}$ be these values in sorted order. Next, split $[n]$ into consecutive intervals of length $N_{(1)},\ldots,N_{(m)}$ and define $F_0\colon[n]\to[m]$ to take value $i$ on the $i$th interval. Formally,
    \begin{equation}
        F_0(j) = i \quad \textrm{if } j\in\left\{\sum_{\ell=1}^{i-1}N_{(\ell)} + 1,\ldots, \sum_{\ell=1}^iN_{(\ell)}\right\}.
    \end{equation}
    Finally, let $\Pi_n\in\msf S_{n}$ and $\Pi_m\in\msf S_m$ be uniformly random and independent of each other and of $(N_1,\ldots,N_m)$ and set $F=\Pi_m\circ F_0\circ \Pi_n$ and $\Psi=\Pi_m\circ d_{m,n}\circ \Pi_n$. 
    Note that $F\overset{d}{=}F_{m,n}$ since $(|F^{-1}(1)|,\ldots,|F^{-1}(m)|)\overset{d}{=}(N_1,\ldots,N_m)$ and both $F$ and $F_{m,n}$ give a uniformly random ordered partition of $[n]$ with these part sizes.
    We also have $\Psi\overset{d}{=}\Psi_{m,n}$ since equipartitions are precisely maps of the form $gd_{m,n}g'$ for some $g\in\msf S_m$ and $g'\in\msf S_{n}$.
    
    For the above coupling, we have
    \begin{equation*}\begin{aligned}
        |F\neq \Psi| &= |F_0\neq d_{m,n}|=\sum_{i=1}^m|F_0^{-1}(i)\setminus d_{m,n}^{-1}(i)|\leq \sum_{i=1}^m\sum_{\ell=1}^i(N_{(\ell)}-\tfrac{n}{m})\leq m\sum_{i=1}^m|N_i-\tfrac{n}{m}|.
    \end{aligned}\end{equation*}
    Indeed, observe that $\sum_{j=1}^iN_{(j)}\geq in/m$ for all $i$, otherwise we must have $N_{(\ell)}<n/m$ for all $\ell\geq i$ and $\sum_{j=1}^mN_{(j)} < in/m + \sum_{\ell=i+1}^mN_{(\ell)}<n$, a contradiction. 
    Therefore, we have
    \begin{equation*}
        F_0^{-1}(i)\setminus d_{m,n}^{-1}(i) = \left\{\max\left(in/m,\ \sum_{\ell=1}^{i-1}N_{(\ell)}\right)+1,\ldots,\sum_{\ell=1}^iN_{(\ell)}\right\},
    \end{equation*}
    so 
    \begin{equation*}
        |F_0^{-1}(i)\setminus d_{m,n}^{-1}(i)| = \sum_{\ell=1}^iN_{(\ell)} - \max\left(in/m,\ \sum_{\ell=1}^{i-1}N_{(\ell)}\right)\leq \sum_{\ell=1}^i(N_{(\ell)}-\tfrac{n}{m}).
    \end{equation*}
    Finally, using the variance of the coordinates of the multinomial distribution, we get
    \begin{equation}
        \mbb E\left[\frac{|F\neq\Psi|}{n}\right] \leq \frac{1}{n/m}\sum_{i=1}^m\mbb E[|N_i-\tfrac{n}{m}|^2]^{1/2} = \frac{m}{n/m}\sqrt{n(1/m)(1-1/m)} = \sqrt{\frac{m(m-1)}{n/m}},
    \end{equation}
    yielding the first claimed bound.

    For the second bound, for $S\subseteq[n]$ let $\Phi_{n,S}\colon S\to [n]$ be a uniformly random injection and let $F_{n,S}\colon S\to[n]$ be a uniformly random map. Then $F_{m,n}|_S \overset{d}{=} \Psi_{m,n}\circ F_{n,S}$ while $\Psi_{m,n}|_S \overset{d}{=} \Psi_{m,n}\circ \Phi_{n,S}$. Thus, we have $$\|\mathrm{Law}(F_{m,n}|_S)-\mathrm{Law}(\Psi_{m,n}|_S)\|_{\mathrm{TV}}\leq \|\mathrm{Law}(F_{n,S}) - \mathrm{Law}(\Phi_{n,S})\|_{\mathrm{TV}}\leq \frac{|S|(|S|-1)}{n},$$
    as claimed.
\end{proof}
Using the above estimate, we can then prove our dual de Finetti theorem.
\begin{proof}[Proof (Theorem~\ref{thm:dual_deFin_general}).]
    We begin by proving that the $\FS$-action is Lipschitz with respect to the distance $\mathrm{dist}(f,h)=|f\neq h|/n$ between maps $f,h\colon[n]\to[m]$. Specifically, we claim that for any $x\in\vct V_n$, we have
    \begin{equation}\label{eq:Lipschitz_action}
        \mbb E\|(\tau(f)-\tau(h))\Pi_n x\|_1 \leq 2d\frac{|f\neq h|}{n}\|x\|_1,
    \end{equation}
    where $\Pi_n\in\msf S_n$ is uniformly random. To this end, write $x=\sum_{\alpha\in\mc A_n}x_{\alpha}e_{\alpha}$ and let $(S_{\alpha}\subseteq[n])_{\alpha\in\mc A_n}$ be the subsets from the theorem statement. Then
    \begin{equation*}
        \mbb E\|(\tau(f)-\tau(h))\Pi_n x\|_1 \leq \sum_{\alpha\in\mc A_n}|x_{\alpha}|\cdot \|\tau(f\circ\Pi_n) e_{\alpha} - \tau(h\circ\Pi_n)e_{\alpha}\|_1\leq \sum_{\alpha\in\mc A_n}|x_{\alpha}|\cdot 2\mbb P[f\circ\Pi_n|_{S_{\alpha}}\neq h\circ\Pi_n|_{S_{\alpha}}],
    \end{equation*}
    where the last inequality follows from the fact that $\tau(f\circ\Pi_n) e_{\alpha}$ depends only on $f\circ\Pi_n|_{S_{\alpha}}$ and similarly for $h\circ\Pi_n$, and the fact that $\|\tau(f\circ\Pi_n) e_{\alpha} - \tau(h\circ\Pi_n)e_{\alpha}\|_1\leq \|\tau(f\circ\Pi_n) e_{\alpha}\|_1 + \|\tau(h\circ\Pi_n)e_{\alpha}\|_1 \leq 2$ by assumption. Since $|S_{\alpha}|\leq d$ and since $\msf S_n$ acts transitively on subsets of $[n]$ of each fixed size, we conclude that
    \begin{equation*}
        \mbb P[f\circ\Pi_n|_{S_{\alpha}}\neq h\circ\Pi_n|_{S_{\alpha}}] = \frac{1}{\binom{n}{|S_{\alpha}|}}\sum_{\substack{S\subseteq[n] \\ |S|=|S_{\alpha}|}}\mathbbm{1}[f|_S \neq h|_S] \leq \sum_{i\in[n]}\mathbbm{1}[f(i)\neq h(i)]\frac{\binom{n-1}{|S_{\alpha}|-1}}{\binom{n}{|S_{\alpha}|}} = |S_{\alpha}|\frac{|f\neq h|}{n}.
    \end{equation*}
    Since $|S_{\alpha}|\leq d$ for all $\alpha\in\mc A_n$, we obtain the claimed bound~\eqref{eq:Lipschitz_action}.

    Using~\eqref{eq:Lipschitz_action}, we can now prove the first two claims. Two measures defining the same freely-symmetrized element map to the same image since $\tau(F_{m,N})\mathrm{sym}_{\msf S_N}\tau(\iota_{N,n})\overset{d}{=}\tau(F_{m,n})$ by Lemma~\ref{lem:unif_map_invars}. Similarly, the image of the given map is freely-described since $\tau(d_{m,mk})\tau(F_{mk,n})\overset{d}{=}\tau(F_{m,n})$ by the same lemma.
    Next, let $(\Psi_{m,n}',F_{m,n}')$ be an optimal coupling of a uniformly random equipartition $\Psi_{m,n}$ and a uniformly random map $F_{m,n}$. For any $\mu_n\in\mc P(\Omega_n)^{\msf S_n}$, sample $X_n\sim\mu_n$, and independently sample $(\Psi_{m,n}',F_{m,n}')$ and a uniformly random $\Pi_n\in\msf S_n$.
    Then $(\tau(\Psi_{m,n}')\Pi_nX_n,\tau(F_{m,n}')\Pi_nX_n)$ is a coupling of $\tau(d_{m,n})X_n$ and $\tau(F_{m,n})X_n$ since $gX_n\overset{d}{=}X_n$ for any $g\in\msf S_n$ and since $\Psi_{m,n}'\overset{d}{=}d_{m,n}\Pi_n$ for a uniformly random $\Pi_n\in\msf S_n$, so
    \begin{equation*}\begin{aligned}
        W_1(\mathrm{Law}(\tau(d_{m,n})X_n),\mathrm{Law}(\tau(F_{m,n})X_n))&\leq \mbb E[\|(\tau(\Psi_{m,n}')-\tau(F_{m,n}'))\Pi_nX\|]\\ &\leq 2d\mbb E\left[\|X\|\frac{|\Psi_{m,n}'\neq F_{m,n}'|}n\right]\leq 2d\sqrt{\frac{m(m-1)}{n/m}} \mbb E_{X\sim\mu_n}\|X\|.
    \end{aligned}\end{equation*}
    This proves the $W_1$-bound~\eqref{eq:W1_bound_FS}. This bound implies that the map~\eqref{eq:FS_defin_map_dense} has a dense image, since for any $(\mu_m)\in\varprojlim_{\mscr D(\mathtt V^\star)}\mc P(\Omega_m)^{\msf S_m}$ consider the sequence of freely-symmetrized elements $(\mathrm{sym}_N^{\mscr Z(\mathtt V^\star)}\mu_{n!})$ and its image $(\mathrm{Law}(\tau(F_{m,n!})X_{n!}))_m$ under~\eqref{eq:FS_defin_map_dense}. If $R=\sup_n\sup_{x\in\Omega_n}\|x\|_1$, then for any fixed $m$ and all $n\geq m$ we have
    \begin{equation*}
        W_1(\mu_m, \mathrm{Law}(\tau(F_{m,n!})X_{n!})) = W_1(\mathrm{Law}(\tau(d_{m,n!})X_{n!}),\mathrm{Law}(\tau(F_{m,n!})X_{n!}))\leq 2d\sqrt{\frac{m(m-1)}{n!/m}}R,
    \end{equation*}
    which vanishes as $n\to\infty$, showing that the image of~\eqref{eq:FS_defin_map_dense} is indeed dense.

    Finally, to prove~\eqref{eq:mean_bound_FS} we show that for any $x\in\vct V_n$ and any $m|n$ we have
    \begin{equation}\label{eq:mean_bound_determ}
        \|\mbb E\tau(\Psi_{m,n})x - \mbb E\tau(F_{m,n})x\|_1 \leq \frac{d(d-1)}{n}\|x\|_1,
    \end{equation}
    which yields~\eqref{eq:mean_bound_FS} after conditioning on $X$. To prove~\eqref{eq:mean_bound_determ}, expand $x=\sum_{\alpha\in\mc A_n}x_{\alpha} e_{\alpha}$ and observe that
    \begin{equation*}\begin{aligned}
        \|\mbb E\tau(\Psi_{m,n})x - \mbb E\tau(F_{m,n})x\|_1 &\leq \sum_{\alpha\in\mc A_n}|x_{\alpha}| \|\mbb E\tau(\Psi_{m,n})e_{\alpha} - \mbb E\tau(F_{m,n})e_{\alpha}\|_1\\ &\leq 2\sum_{\alpha\in\mc A_n}|x_{\alpha}| \inf_{(\Psi',F')}\mbb P[\Psi'|_{S_{\alpha}}\neq F'|_{S_{\alpha}}]\\ &= \sum_{\alpha\in\mc A_n}|x_{\alpha}|\cdot \|\mathrm{Law}(\Psi_{m,n}|_{S_{\alpha}}) - \mathrm{Law}(F_{m,n}|_{S_{\alpha}})\|_{\mathrm{TV}} \leq \frac{d(d-1)}{n}\|x\|_1,
    \end{aligned}\end{equation*}
    where in the second line the infimum is over couplings $(\Psi',F')$ of $\Psi_{m,n}$ and $F_{m,n}$, which satisfy $\|\mbb E\tau(\Psi_{m,n})e_{\alpha}-\mbb E\tau(F_{m,n})e_{\alpha}\|_1\leq \mbb E\|\tau(\Psi')e_{\alpha}-\tau(F')e_{\alpha}\|_1\leq 2\mbb P[\Psi'|_{S_{\alpha}}\neq F'|_{S_{\alpha}}]$ for each $\alpha\in\mc A_n$. The last inequality follows from Lemma~\ref{lem:W1_dist}.
    %
\end{proof}
To instantiate this theorem on $(\RR^n)$ and sequences derived from it, we prove Lemma~\ref{lem:ell1_ext}.
\begin{proof}[Proof (Lemma~\ref{lem:ell1_ext}).]
    We induct on the constructions $\mc F$. For $\mc F=\mathrm{id}$ the standard basis $(e_i)_{i\in[n]}$ satisfies the hypotheses of Theorem~\ref{thm:dual_deFin_general} with singleton subsets $(S_i=\{i\})$ and with $d=1$, as $\tau(f)e_i=e_{f(i)}$. Suppose that the claim holds for $(\mc F_1(\RR^n))$ and $(\mc F_2(\RR^n))$, with bases $(e_{\alpha})_{\alpha\in\mc A_n}$ and $(e_{\beta})_{\beta\in\mc B_n}$ and subsets $(S_{\alpha}\subseteq[n])$ and $(S_{\beta}\subseteq[n])$ of sizes $|S_{\alpha}|\leq \deg \mc F_1$ and $|S_{\beta}|\leq \deg \mc F_2$. 
    It is easy to check that the basis $\{e_{\alpha}\}\sqcup \{e_{\beta}\}$ satisfies the desired conclusions with subsets $\{S_{\alpha}\}\sqcup\{S_{\beta}\}$, whose size is at most $\max\{\deg\mc F_1,\deg \mc F_2\} = \deg \mc F_1\times\mc F_2$, and that the basis $\{e_{\alpha}\otimes e_{\beta}\}$ satisfies these conclusions with subsets $\{S_{\alpha} \cup S_{\beta}\}$, whose size is at most $\deg\mc F_1 + \deg \mc F_2 = \deg \mc F_1\otimes \mc F_2$. An explicit computation also shows that the bases $\{e_{\alpha_1}\cdots e_{\alpha_d}\}$ and $\{x_{\alpha_1}\cdots x_{\alpha_d}\}$ for $\mathrm{Sym}^d\mc F_1(\RR^n)$ and for $\RR[\mc F_1(\RR^n)]_d$, respectively, satisfy the needed hypotheses with subsets $\{S_{\alpha_1}\cup\ldots\cup S_{\alpha_d}\}$, whose size is at most $d\deg \mc F_1 = \deg \mathrm{Sym}^d\mc F_1 = \deg \RR[\mc F_1]_d$.
\end{proof}

\paragraph{Proofs for Section~\ref{sec:deFinetti_intro}:} 
We derive Theorems~\ref{thm:DeFin_general_Zseq} and \ref{thm:dual_deFin_Pseq} from Theorems~\ref{thm:DeFin_for_coFS} and \ref{thm:dual_deFin_general}.


\begin{proof}[Proof (Theorem~\ref{thm:DeFin_general_Zseq})]
Let $\mc F(\mathtt{R})$ be the $\coFS$-representation derived from the construction $\mc F$, where $\mathtt{R}$ is the standard $\coFS$-representation.  Next we recall that the consistent sequences $\mscr Z(\mc F(\mathtt{R}))$ and $\mscr D(\mc F(\mathtt{R}))$ from \eqref{eq:coFS_consistent_sequences} are equal to $\mc F(\mscr Z)$ and $\mc F(\mscr D)$, respectively.  Instantiating Theorem~\ref{thm:DeFin_for_coFS} on the $\coFS$-representation $\mc F(\mathtt{R})$ yields the desired conclusion.
\end{proof}

\begin{proof}[Proof (Theorem~\ref{thm:dual_deFin_Pseq})]
The proof follows the same strategy as that of Theorem~\ref{thm:DeFin_general_Zseq}, applying Theorem~\ref{thm:dual_deFin_general} to the $\FS$-representation $\mc F(\mathtt{R})^\star$ and using Lemma~\ref{lem:ell1_ext}.
\end{proof}

\section{Bounds for Any-Dimensional Polynomial Problems}\label{sec:free_poly_opts}

In this section, we use Theorems~\ref{thm:DeFin_general_Zseq} and \ref{thm:dual_deFin_Pseq} from Section~\ref{sec:deFinetti} to construct lower bounds on the limiting optimal value of a freely-described POP via a freely-symmetrized POP, and we prove associated rates of convergence.  While the constants involved in these rates, as well as their derivations, differ for freely-described problems over $\mscr Z$-sequences and over $\mscr D$-sequences, the construction of the bounds is the same and we seek to provide as unified a treatment as possible.  We begin by abstracting away the common features of these two cases into a single definition.
\begin{definition}[De Finetti maps]\label{def:definetti_maps}
    Let $(\mscr U,\mscr L)$ be a $\FS$-pair on $(\vct V_n)$ as in Definition~\ref{def:fs_pair} and let $F_{m,n}\colon[n]\to[m]$ be a uniformly random map. Define the \emph{de Finetti maps} on $(\mscr U, \mscr L)$ to be the random linear maps $L_{m,n}\colon\vct V_n\to\vct V_m$ given by
    \begin{enumerate}
        \item $L_{m,n} = \rho(F_{n,m})$ if $(\mscr U,\mscr L)=(\mc F(\mscr D),\mc F(\mscr Z))$,
        \item $L_{m,n} = \rho(F_{m,n})^\star$ if $(\mscr U,\mscr L) = (\mc F(\mscr Z),\mc F(\mscr D))$,
    \end{enumerate}
    for some sequence of standard constructions $\mc F$.  Here we have used the action $\rho$ of maps between finite sets on $(\vct V_n)$ derived in Section~\ref{sec:deFinetti_intro} from the standard action on $(\RR^n)$.
\end{definition}


We construct bounds for freely-described problems over $\mscr U$ in terms of freely-symmetrized problems over $\mscr L$ using the random maps $L_{m,n}$ and their associated de Finetti theorems.  In particular, Theorems~\ref{thm:DeFin_general_Zseq} and \ref{thm:dual_deFin_Pseq} from Section~\ref{sec:deFinetti} can be summarized as follows. For any $\FS$-pair $(\mscr U,\mscr L)$ and any $\FS$-compatible sequence of sets $(\Omega_n)$, the map from  $\varinjlim_{\mscr U}\mc P(\Omega_n)^{\msf S_n}$ to $\varprojlim_{\mscr L}\mc P(\Omega_n)^{\msf S_n}$ defined by:
\begin{equation*}
    (\mathrm{sym}_N^{\mscr U} \mu_m)_{m \preceq_{\mscr U} N} \mapsto \left(\mathrm{Law}(L_{n,m} X)\right)_n
\end{equation*}
has a dense image, where $X \sim \mu_m$ is independent of $L_{n,m}$.  Further, we have nonasymptotic error bounds (all in the appropriate topology).


Section~\ref{sec:free_sym_bds} presents our main result on obtaining freely-symmetrized bounds for freely-described POPs.  These lower bounds are interpreted in terms of any-dimensional certificates of nonnegativity in Section~\ref{sec:any_dim_nonneg_cones}.  Finally, Section~\ref{sec:free_sym_bds_proofs} gives the proofs of some supporting results that are used in the proof of the main theorem in Section~\ref{sec:free_sym_bds}.  This section is organized so that it is possible to proceed directly to Section~\ref{sec:construct_bds} after Section~\ref{sec:free_sym_bds}.


\subsection{Freely-Symmetrized Bounds for Freely-Described Problems} \label{sec:free_sym_bds}

The goal of this section is to prove the following theorem describing a procedure for constructing freely-symmetrized lower bounds for freely-described problems.  We use de Finetti maps to identify freely-symmetrized POPs whose optimal values yield lower bounds on the given freely-described POPs, and then to derive associated rates of convergence for these bounds by applying Theorems~\ref{thm:DeFin_general_Zseq} and \ref{thm:dual_deFin_Pseq} from Section~\ref{sec:deFinetti}.

\begin{theorem}\label{thm:free_sym_bds}
    Let $(\mscr U,\mscr L)$ be a $\FS$-pair of degree $D$, let $(\Omega_n\subseteq \vct V_n)$ be $\FS$-compatible and compact, and let $(p_n)\in\varprojlim_{\mscr U}\RR[\vct V_n]^{\msf S_n}_{\leq d}$ be a freely-described polynomial of degree $d$.  If $\mscr U$ is a $\mscr Z$-sequence, endow each $\vct V_n$ with the 1-norm with respect to the basis of Lemma~\ref{lem:ell1_ext} and assume that $R=\sup_n\sup_{x\in\Omega_n}\|x\|_1<\infty$.
    Consider the freely-described POP $u_n=\inf_{x\in\Omega_n}p_n(x)$ with its limiting optimal value $u_{\infty}=\inf_n u_n$.  We have the following results.
    \begin{enumerate}[font=\emph, align=left]
        \item[(Dual cost)] For each $k\geq dD$, there exists $q_k\in\RR[\vct V_k]_{\leq d}$ such that
        \begin{equation}\label{eq:representation_eqn}
            p_n(x)=\mbb E[q_k(L_{k,n}x)],\quad \textrm{ for all } x\in\vct V_n \textrm{ and all } n\in\NN. 
        \end{equation}
        The corresponding freely-symmetrized element $(\mathrm{sym}_n^{\mscr L}q_k)$
        is unique, meaning that if $q_{k_1}$ and $q_{k_2}$ are two polynomials satisfying~\eqref{eq:representation_eqn}, then $\mathrm{sym}_n^{\mscr L}q_{k_1} = \mathrm{sym}_n^{\mscr L}q_{k_2}$ whenever $n \succeq_{\mscr L} k_1,k_2$.
        In terms of the above $q_k$, we have
        \begin{equation}\label{eq:FDM}
            u_\infty = \inf_{(\mu_n)\in\varprojlim_{\mscr L}\mc P(\Omega_n)^{\msf S_n}}\mbb E_{\mu_k}q_k.
        \end{equation}

        \item[(Freely-symmetrized bounds)] Let $q_k$ be as above and consider the freely-symmetrized sequence of problems $$\ell_n=\inf_{x\in\Omega_n}\mathrm{sym}_n^{\mscr L}q_k(x),$$ 
        for $k\preceq_{\mscr L} n$. Then $\ell_n\leq \ell_N\leq u_{\infty}$ for all $k\preceq_{\mscr L} n\preceq_{\mscr L} N$, and denoting by $\ell_{\infty}=\sup_n\ell_n$ their limiting optimal value, we have $\ell_\infty=u_\infty$.

        \item[(Rate of convergence I)] If $\mscr U$ is a $\mscr D$-sequence and $k\leq n$, then
        \begin{equation}\label{eq:rate_coFS}
            u_n-\ell_n \leq \frac{k(k-1)}{n} \|q_k\|_{\Omega_k},
        \end{equation}
        where $\|q_k\|_{\Omega_k}=\sup_{x\in\Omega_k}|q_k(x)|$. 

        \item[(Rate of convergence II)] If $\mscr U$ is a $\mscr Z$-sequence and the underlying vector spaces $(\vct V_n)$ are endowed with the 1-norms of Lemma~\ref{lem:ell1_ext}, then for any $k|n$
        \begin{equation}\label{eq:rate_FS}
            u_{n}-\ell_{n} \leq \frac{d(d+1)D(dD-1)}{n}\|q_k\|_{c}\max\{1,R\}^d,
        \end{equation}
        where $\|q_k\|_{c}$ is the largest magnitude of a coefficient in $q_k$.
    \end{enumerate}
\end{theorem}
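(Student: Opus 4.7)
The proof has four interrelated parts. I would prove them in the order: (i) construct $q_k$, (ii) establish both rate bounds, (iii) deduce the freely-symmetrized bounds, and (iv) derive~\eqref{eq:FDM}.

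For (i), the plan is to consider the linear map $\Psi_k \colon \mathrm{Sym}^{\leq d}\vct V_k^{\msf S_k} \to \prod_n \mathrm{Sym}^{\leq d}\vct V_n^{\msf S_n}$ defined by $\Psi_k(q_k)_n(x) := \mbb E[q_k(L_{k,n}x)]$, and to show it descends to a bijection $\varinjlim_{\mscr L}\mathrm{Sym}^{\leq d}\vct V_n^{\msf S_n}\xrightarrow{\sim}\varprojlim_{\mscr U}\mathrm{Sym}^{\leq d}\vct V_n^{\msf S_n}$ whenever $k\geq d$. The random-map identities collected in Lemma~\ref{lem:unif_map_invars} verify three things simultaneously: each $\Psi_k(q_k)_n$ is $\msf S_n$-invariant (by invariance of $F$ under right composition by a permutation), the resulting sequence is freely-described on $\mscr U$ (using $d_{n,N}\circ F_{N,k}\overset{d}{=}F_{n,k}$ in Case 1 and the dual identity $F_{k,N}\circ\iota_{N,n}\overset{d}{=}F_{k,n}$ in Case 2), and $\Psi_k$ depends only on the $\mscr L$-freely-symmetrized class of $q_k$. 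By Propositions~\ref{prop:finite_dim_free_invariants} and~\ref{prop:calc_for_stability}, source and target both have dimension $\dim\mathrm{Sym}^{\leq d}\vct V_d^{\msf S_d}$, so it suffices to show injectivity, which comes from the rate bound below: if $\Psi_k(q_k)\equiv 0$, the rate bound gives $|\mathrm{sym}_n^{\mscr L}q_k(x)|\leq\epsilon_n\|q_k\|_{\Omega_k}\to 0$ uniformly on $\Omega_n$, forcing $q_k$ into the zero freely-symmetrized class.

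For (ii), in Case 1 I would rewrite $\mathrm{sym}_n^{\mscr L}q_k(x)=\mbb E[q_k(\rho(\Phi_{n,k})x)]$ with $\Phi_{n,k}\colon[k]\hookrightarrow[n]$ a uniformly random injection (using $g\iota_{n,k}\overset{d}{=}\Phi_{n,k}$ for $g\sim\mathrm{Haar}(\msf S_n)$), so that
\begin{equation*}
    |p_n(x)-\mathrm{sym}_n^{\mscr L}q_k(x)|=\left|\mbb E[q_k(\rho(F_{n,k})x)-q_k(\rho(\Phi_{n,k})x)]\right|\leq\|q_k\|_{\Omega_k}\cdot\tfrac{k(k-1)}{n}
\end{equation*}
follows from Lemma~\ref{lem:sampling} together with $\FS$-compatibility (which puts the two pushforwards in $\Omega_k$); taking the sup over $x\in\Omega_n$ gives~\eqref{eq:rate_coFS}. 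In Case 2, the parallel identity is $\mathrm{sym}_n^{\mscr L}q_k(x)=\mbb E[q_k(\rho(\Psi_{k,n})^\star x)]$ with $\Psi_{k,n}\colon[n]\twoheadrightarrow[k]$ a uniform equipartition; the bound uses the coupling from Lemma~\ref{lem:W1_dist}, averaged over $\sigma\in\msf S_n$ and combined with Lemma~\ref{lem:ell1_ext} to translate the map-metric $|F\neq\Psi|/n$ into an $\|\cdot\|_1$-distance in $\vct V_k$, and then Lipschitz continuity of $q_k$ on $\Omega_k$ (with Lipschitz constant $\|q_k'\|_{\Omega_k}$ in $\|\cdot\|_1$).

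For (iii), $\mscr L$-monotonicity $\ell_n\leq\ell_N$ is immediate from the freely-symmetrized structure reviewed in Section~\ref{sec:any_dim_poly_probs}. To show $\ell_n\leq u_\infty$: given $\epsilon>0$, pick $n^*$ with $u_{n^*}\leq u_\infty+\epsilon$; by directedness of both posets choose $N$ with $N\succeq_{\mscr L}n$ and $N\succeq_{\mscr U}n^*$, and chain $\ell_n\leq\ell_N\leq u_N+\epsilon_N\leq u_{n^*}+\epsilon_N\leq u_\infty+\epsilon+\epsilon_N$; letting $N$ grow along this subsequence kills $\epsilon_N$, and then $\epsilon\to 0$. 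The rate bound in the reverse direction gives $\ell_\infty\geq u_\infty$ from $\ell_n\geq u_n-\epsilon_n\geq u_\infty-\epsilon_n$, so $\ell_\infty=u_\infty$. For~\eqref{eq:FDM}: the upper bound $\leq u_\infty$ comes from the freely-described measure $\mu_m:=\mathrm{Law}(L_{k,m}x)\in\varprojlim_{\mscr L}\mc P(\Omega_m)^{\msf S_m}$ (freely-described by Lemma~\ref{lem:unif_map_invars}), which satisfies $\mbb E_{\mu_k}[q_k]=p_n(x)$ by~\eqref{eq:representation_eqn}; the lower bound uses $\mbb E_{\mu_k}[q_k]=\mbb E_{\mu_n}[\mathrm{sym}_n^{\mscr L}q_k]\geq\ell_n$ for any freely-described $(\mu_m)$ and any $n\succeq_{\mscr L}k$, so passing to the sup yields $\inf\geq\ell_\infty=u_\infty$. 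The main obstacle will be Part~(i): well-definedness of $\Psi_k$ on the $\varinjlim_{\mscr L}$ quotient requires careful bookkeeping of Lemma~\ref{lem:unif_map_invars}, and injectivity must be imported from the quantitative rate bound in (ii) rather than obtained by an explicit inverse construction, so the steps must be ordered carefully to avoid cyclicity.
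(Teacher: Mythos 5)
Your overall plan matches the paper's architecture: construct $q_k$ via an isomorphism between freely-symmetrized and freely-described elements, derive pointwise error bounds from the de Finetti theorems, and deduce the monotone lower bounds and the measure reformulation. Parts (ii) and (iv) are essentially correct, and your route for (iv) is genuinely different from (and slightly leaner than) the paper's: you sandwich the measure formula between $\ell_\infty$ (from (iii)) and $u_\infty$ directly, whereas the paper first reformulates $u_\infty$ as an optimization over $\varprojlim_{\mscr L}\mc P(\Omega_n)^{\msf S_n}$ by invoking density of de Finetti pushforwards and then uses that reformulation to get $\ell_n \leq u_\infty$. Part (iii) is also sound, provided you note that the directed-chain step $\ell_N \leq u_N + \epsilon_N$ requires the two-sided pointwise estimate $\sup_{\Omega_N}|p_N - \mathrm{sym}_N^{\mscr L}q_k| \leq \epsilon_N$ (which you do establish in (ii)) and not merely the one-sided stated rate $u_N - \ell_N \leq \epsilon_N$.

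The genuine gap is in part (i), where you re-derive Proposition~\ref{prop:free_sym_and_free_descr_isom} by injectivity. Two problems. First, the injectivity step does not close: the rate bound only controls $|\mathrm{sym}_n^{\mscr L}q_k|$ on $\Omega_n$, which can be a thin (even Zariski-thin) $\FS$-compatible set on which the sup conveys nothing about the polynomial. Even taking $\Omega_n$ to be the extended $\ell_1$-balls, $\sup_{\Omega_n}|\mathrm{sym}_n^{\mscr L}q_k|$ can decay to zero for a \emph{nonzero} freely-symmetrized class (e.g.\ $\mscr L=\mc F(\mscr Z)$ symmetrizations acquire coefficients of order $k/n$), so the conclusion ``decaying sup forces $q_k$ into the zero class'' is simply false in general. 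Second, even if injectivity of $\bar\Psi:\varinjlim_{\mscr L}\to\varprojlim_{\mscr U}$ were granted, Proposition~\ref{prop:finite_dim_free_invariants} only gives $\dim\varinjlim_{\mscr L}\le\dim\varprojlim_{\mscr U}$, which does not promote injectivity to bijectivity; the equality of dimensions you cite from Proposition~\ref{prop:calc_for_stability} is itself proved in the paper using Proposition~\ref{prop:free_sym_and_free_descr_isom}, so relying on it here is circular. The paper sidesteps both issues by first reducing, via the duality $(\varinjlim \vct V_n^{\msf S_n})^\star \cong \varprojlim \vct V_n^{\msf S_n}$ of Proposition~\ref{prop:free-duality}, to the case $\mscr U = \mc F(\mscr Z)$ (so the dimension count from Proposition~\ref{prop:finite_dim_free_invariants} goes the right way) and then proving \emph{surjectivity}, not injectivity, by choosing for a given freely-described $(v_n)$ the bespoke compact sets $\Omega_n = \overline{\bigcup_m \mathrm{supp}(L_{m,n}^\star v_m)}$ and showing the de Finetti images are dense there. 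You should replace your injectivity argument with this surjectivity-plus-duality structure (or else supply the missing uniform-in-$n$ anticoncentration lower bound on $\sup_{\Omega_n}|\mathrm{sym}_n^{\mscr L}q_k|$, which would be an additional and nontrivial ingredient).
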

Explicitly, if $\{e_{\alpha}\}_{\alpha\in \mc A_k}$ is the basis for $\vct V_k$ constructed in Section~\ref{sec:intro} and we write $q_k(x)=\sum_{I\in\NN^{\mc A_k}}c_Ix^I$ where $x^I=\prod_{\alpha\in\mc A_k}x_{\alpha}^{I_{\alpha}}$, then $\|q_k\|_c=\sup_I|c_I|$. Note also that $\mathrm{sym}_n^{\mscr L}q_k(x) = \mbb Eq_k(\psi_{n,k}^\star\Pi_n x)$ where $\Pi_n\in\msf S_n$ is uniformly random, so we can view the lower bounds $\ell_n$ as being obtained from the original sequence of POPs $u_n$ by modifying the sampling map arising in the objective functions.

In summary, given a $\FS$-pair $(\mscr U, \mscr L)$ and a freely-described POP defined over a consistent sequence $\mscr U$, we obtain a reformulation~\eqref{eq:FDM} involving optimization over a sequence of measures that is freely-described over $\mscr L$; this is the promised generalization of~\eqref{eq:limiting_lower_bd} from the example of Section~\ref{sec:case_study}.  Truncating the sequence of measures in this reformulation then yields the desired lower bounds.  For freely-described problems $(u_n)$ over $\mscr D$-sequences, we get freely-symmetrized lower bounds $(\ell_n)$ over the corresponding $\mscr Z$-sequence satisfying
\begin{equation*}
    \ldots\leq \ell_n\leq \ell_{n+1}\leq \ldots \leq \ell_\infty=u_\infty\leq\ldots\leq u_{nk}\leq u_n\leq\ldots,
\end{equation*}
for all $n,k\in\NN$, converging at a rate $u_n-\ell_n\lesssim 1/n$. 
For freely-described problems $(u_n)$ over $\mscr Z$-sequences, we get freely-symmetrized lower bounds $(\ell_n)$ over the corresponding $\mscr D$-sequence satisfying
\begin{equation*}
    \ldots\leq \ell_n\leq \ell_{nk}\leq \ldots \leq \ell_\infty=u_\infty\leq\ldots\leq u_{n+1}\leq u_n\leq\ldots,
\end{equation*}
for all $n,k\in\NN$, also converging at a rate $u_{nk}-\ell_{nk}\lesssim 1/n$ for fixed $k$ as $n\to\infty$.  We remark that if $(\Omega_n)$ are not compact, the optimal values $(\ell_n)$ are still monotonically increasing lower bounds for $u_\infty$ as above, but may not converge to $u_\infty$. For example, if $(\mscr U,\mscr L)=(\mscr D,\mscr Z)$ and $u_n=\inf_{x\in\RR^n}\left(\frac{1}{n}\sum_{i=1}^nx_i\right)^2$, we have $u_n=0$ for all $n$ but $\ell_n = \inf_{x\in\RR^n}\frac{1}{\binom{n}{2}}\sum_{i<j}x_ix_j=-\infty$ for all $n\geq2$.
We also note that while $u_n$ is not a lower bound on $u_\infty$ in general, one could imagine computing $u_n$ and subtracting the right-hand sides of~\eqref{eq:rate_coFS} or~\eqref{eq:rate_FS} and thereby obtaining lower bounds on $u_\infty$.  However, such an approach leads to more conservative lower bounds than $\ell_n$ by construction, as we illustrate in several examples in Section~\ref{sec:construct_bds}.

As Theorem~\ref{thm:free_sym_bds} shows, the main step in our framework is the construction of the fixed polynomial $q_k$ satisfying~\eqref{eq:representation_eqn} for a given freely-described polynomial $(p_n)$.  We explain how to do so for various $\FS$-pairs arising in applications in Section~\ref{sec:construct_bds}.  
We show next that the rates of convergence in Theorem~\ref{thm:free_sym_bds} are tight, followed by a proof of the theorem.
\begin{example}[Tightness of rates in Theorem~\ref{thm:free_sym_bds}]
    To see that the $1/n$ rate for freely-described problems over $\mscr D$-sequences is tight, consider the following freely-described POP over $\mscr D$:
    \begin{equation*}
        u_n = \inf_{x\in[-1,1]^n} \left(\frac{1}{n}\sum_{i=1}^nx_i\right)^2.
    \end{equation*}
    Here $D = 1, ~ d = 2$ and $u_n=0$ for all $n$.  For $k = 2$, we have $q_k(x) = x_1x_2$, so that the lower bounds are given by:
    \begin{equation*}
        \ell_n = \inf_{x\in[-1,1]^n}\frac{1}{\binom{n}{2}}\sum_{i<j}x_ix_j = \inf_{x\in[-1,1]^n} \frac{\left(\sum_ix_i\right)^2-\sum_ix_i^2}{n(n-1)}\geq -\frac{1}{n-1},
    \end{equation*}
    where the final inequality follows from the observation that $\left(\sum_i x_i\right)^2 \geq 0$ and $\sum_ix_i^2 \leq n$ for $x \in [-1,1]^n$.
    Setting $\lfloor n/2\rfloor$ of the  $x_i$'s to 1 and the remaining entries to $-1$ shows that $\ell_n=-\frac{1}{n-1}$ when $n$ is even and $-\frac{1}{n-1}\leq \ell_n\leq -\frac{1}{n-1} + \frac{1}{n(n-1)}$ when $n$ is odd. Thus, $u_n-\ell_n\sim 1/n$.

    To see that the $1/n$ rate for freely-described problems over $\mscr Z$-sequences is tight, consider
    \begin{equation*}
        u_n = \inf_{x\in \Delta^{n-1}}\sum_{i=1}^nx_i^2.
    \end{equation*}
    Here $D=1, ~ d=2,$ and $u_n=1/n$ for all $n$. For $k=2$, we have $q_k(x)=(x_1-x_2)^2$, so the lower bounds are given by
    \begin{equation*}
        \ell_{2n} = \inf_{x\in\Delta^{2n-1}}\frac{1}{\binom{2n}{n}}\sum_{\substack{I\subseteq[2n]\\ |I|=n}}\left(\sum_{i\in I}x_i - \sum_{i\notin I}x_i\right)^2.
    \end{equation*}
    Note that $\ell_{2n}=0$ for all $n$ since we clearly have $\ell_{2n}\geq0$ while the constant vector $x=\frac{1}{2n}\mathbbm{1}_{2n}$ attains zero cost above. Thus, we have $u_{2n}-\ell_{2n}\sim 1/n$.
\end{example}

We state two intermediate propositions needed to prove Theorem~\ref{thm:free_sym_bds} followed by a proof of the theorem.  The proofs of the propositions themselves are deferred to Section~\ref{sec:free_sym_bds_proofs}.  The first proposition pertains to an isomorphism between freely-symmetrized elements in $\mscr L$ and freely-described elements in $\mscr U$ induced by the de Finetti maps, which facilitates the identification of the dual cost in Theorem~\ref{thm:free_sym_bds}.

\begin{proposition}\label{prop:free_sym_and_free_descr_isom}
Let $(\mscr U,\mscr L)$ be a $\FS$-pair over a sequence of vector spaces $(\vct V_n)$.  The map from $\varinjlim_{\mscr L}\vct V_n^{\msf S_n}$ to $\varprojlim_{\mscr U}\vct V_n^{\msf S_n}$ which sends $(\mathrm{sym}_n^{\mscr L} v_k)_{k\preceq_{\mscr L} n} \mapsto (\mbb EL_{k,n}^\star v_k)_n$ is an isomorphism, where $L_{k,n}$ is the appropriate de Finetti map on $(\mscr U, \mscr L)$.
\end{proposition}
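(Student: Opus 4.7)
The plan is to prove $\Phi$ is an isomorphism in four steps: well-definedness, an adjointness relation with a ``swapped'' analogue $\tilde\Phi$, surjectivity by an explicit computation in one case of the pair, and closing via dimension counting.

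First, I would verify that the proposed map $\Phi$ is well-defined and linear. Linearity is immediate, and well-definedness requires (i) that $\mathbb{E}[L_{k,n}^\star v_k]$ lies in $\vct V_n^{\msf S_n}$ and satisfies the $\mscr U$-projection condition, and (ii) that equivalent $\mscr L$-representatives $v_k\sim \mathrm{sym}_N^{\mscr L}v_k$ produce the same image. Both parts reduce to the invariances of a uniformly random map recorded in Lemma~\ref{lem:unif_map_invars}, combined with the contravariance of $\rho$ and the fact that the $\mscr U$-projection (respectively $\mscr L$-embedding) is either an injection, an equipartition, or their adjoint, so its composition with a uniform random map retains the uniform law after taking expectations.

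Second, I consider the analogous map $\tilde\Phi\colon \varinjlim_{\mscr U}\vct V_n^{\msf S_n} \to \varprojlim_{\mscr L}\vct V_n^{\msf S_n}$ for the \emph{swapped} $\FS$-pair $(\mscr L,\mscr U)$, and observe that $\tilde\Phi$ is the linear adjoint of $\Phi$ under the duality pairings of Proposition~\ref{prop:free-duality}. This reduces to the identity $\mathbb{E}\tilde L_{j,k} = (\mathbb{E}L_{k,j})^\star$, which holds because $\tilde L_{j,k}$ and $L_{k,j}^\star$ are (up to the swap) $\rho(F)$ and $\rho(F)^\star$ of the same uniformly random map $F\colon[k]\to[j]$.

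Third, I would prove surjectivity of $\Phi$ in the case $\mscr U=\mc F(\mscr Z)$ (so $L_{n,m}^\star=\rho(F_{n,m})$ for $F_{n,m}\colon[m]\to[n]$ uniform) by an explicit computation. Given $(w_m)\in\varprojlim_{\mscr U}$, each $w_n\in\vct V_n^{\msf S_n}$ is itself invariant, so I may compute
\begin{equation*}
\Phi([w_n])_m \;=\; \mathbb{E}[\rho(F_{n,m})w_n] \;=\; \frac{1}{n^m}\sum_{f\colon[m]\to[n]}\rho(f)w_n,
\end{equation*}
splitting the sum over injective and non-injective $f$. Each injective $f$ has the form $\sigma\iota_{n,m}$ for some $\sigma\in\msf S_n$, and using $\msf S_n$-invariance of $w_n$ together with $\rho(\iota_{n,m})w_n = w_m$ (the $\mscr U$-projection), the injective part contributes $\tfrac{n(n-1)\cdots(n-m+1)}{n^m}w_m\to w_m$. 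For non-injective $f$, factoring $f=\tilde f\circ s$ with $s\colon[m]\twoheadrightarrow[k]$ and $\tilde f\colon[k]\hookrightarrow[n]$ injective (so $k<m$), the injective argument gives $\rho(\tilde f)w_n=w_k$, hence $\rho(f)w_n=\rho(s)w_k$ depends only on the fixed finitely-many low-dimensional invariants $\{w_k\}_{k<m}$. Combined with the $O(n^{m-1})$ count of non-injective maps and the norm bounds from Lemma~\ref{lem:ell1_ext}, the non-injective contribution is $O(1/n)$ in the extended $\ell_1$-norm on $\vct V_m$. So $\Phi([w_n])_m\to w_m$ in $\vct V_m$ for each $m$.

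Finally, since $\varprojlim_{\mscr U}$ is finite-dimensional by Proposition~\ref{prop:finite_dim_free_invariants}, the image of $\Phi$ is a closed linear subspace, so the limit $(w_m)$ of elements $\Phi([w_n])$ in the image lies in the image, giving surjectivity. To close the argument, Proposition~\ref{prop:finite_dim_free_invariants} gives $\dim\varinjlim_{\mc F(\mscr Z)}=\dim\varprojlim_{\mc F(\mscr Z)}=\dim\vct V_d^{\msf S_d}$ while $\dim\varinjlim_{\mc F(\mscr D)},\,\dim\varprojlim_{\mc F(\mscr D)}\leq \dim\vct V_d^{\msf S_d}$; surjectivity then forces equality throughout via rank-nullity, and the adjointness relation promotes this to an isomorphism on both sides of the $\FS$-pair. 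The main obstacle is the non-injective estimate in the surjectivity step: while the injective maps cleanly recover $w_m$, the non-injective contributions must be shown to vanish, which relies on the observation that non-injective $f\colon[m]\to[n]$ factor through $[k]$ for some $k<m$ and therefore see only a fixed, finite family of low-dimensional invariants, so their total weight $O(n^{m-1})$ is suppressed by the normalization $n^m$.
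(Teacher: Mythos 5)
Your proof is correct and follows the same high-level architecture as the paper's (well-definedness via Lemma~\ref{lem:unif_map_invars}; reduction to the $\mscr U = \mc F(\mscr Z)$ case by duality; density of the image plus finite-dimensionality to close), but the surjectivity step is handled by a genuinely different argument. The paper invokes Theorem~\ref{thm:DeFin_general_Zseq}, which requires first constructing an $\FS$-compatible \emph{compact} sequence of supports $\Omega_n = \overline{\bigcup_m \mathrm{supp}(L_{m,n}^\star v_m)}$, identifying $(v_n)$ with the freely-described measure $(\delta_{v_n})$, applying the total-variation estimate, and then integrating bounded coordinate functions to pass from laws to means. Your version is a direct combinatorial calculation: decompose $\tfrac{1}{n^m}\sum_{f\colon[m]\to[n]}\rho(f)w_n$ over injective and non-injective $f$; use $\msf S_n$-invariance and $\rho(\iota_{n,m})w_n = w_m$ so the injective part contributes $\tfrac{(n)_m}{n^m} w_m$; and use the factorization $f=\tilde f\circ s$ through $[k]$ with $k<m$ to see that the non-injective part is a fixed finite linear combination of low-dimensional invariants $\{w_k\}_{k<m}$ weighted by $\tfrac{n^m-(n)_m}{n^m} = O(1/n)$. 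This deterministically re-derives the content of Lemma~\ref{lem:sampling} plus the pushforward argument in the proof of Theorem~\ref{thm:DeFin_for_coFS}, specialized to $\mu_n = \delta_{w_n}$: your route buys self-containedness (no detour through measures, compactness, or total variation), while the paper's is more modular and reuses a theorem needed elsewhere. Your explicit adjointness identity $\Phi^\star = \tilde\Phi$ via $\mathbb{E}\tilde L_{j,k} = (\mathbb{E}L_{k,j})^\star$ also usefully spells out what the paper compresses into ``follows by duality by Proposition~\ref{prop:free-duality}.'' One small slip: the estimate for the non-injective contribution should not cite Lemma~\ref{lem:ell1_ext} (that lemma controls $\rho^\star$, not $\rho$); it suffices to observe that there are only finitely many surjections $s\colon[m]\twoheadrightarrow[k]$ with $k<m$, and the $w_k$ are fixed, so the vectors $\rho(s)w_k$ form a finite set whose norms are bounded by a constant depending only on $m$ and $(w_k)_{k<m}$, which is all your bound requires.
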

 

One consequence of this isomorphism is that projections and embeddings between invariants become isomorphisms starting from some dimension,
which also implies that the spaces of freely-described and freely-symmetrized elements over such $\mscr V$ are isomorphic to fixed-dimensional spaces of invariants $\vct V_k^{\msf S_k}$ for all large enough $k$. 
Formally, for each $k\in\NN$ let $\pi_k\colon \varprojlim_{\mscr V}\vct V_n^{\msf S_n}\to \vct V_k^{\msf S_k}$ be the map sending $(v_n)\mapsto v_k$ and $\varsigma_k\colon \vct V_k^{\msf S_k}\to \varinjlim_{\mscr V}\vct V_n^{\msf S_n}$ be the map sending $v_k\mapsto (\mathrm{sym}_n^{\mscr V}v_k)_{n\succeq k}$. We then have the following strengthening of Proposition~\ref{prop:finite_dim_free_invariants}.
 \begin{proposition}\label{prop:calc_for_stability}
     If $\mscr V$ is a $\mscr Z$- or a $\mscr D$-sequence of degree $d_{\mscr V}$, then $\dim\varprojlim\vct V_n^{\msf S_n}=\dim\varinjlim\vct V_n^{\msf S_n}=\dim\vct V_{d_\mscr V}^{\msf S_{d_{\mscr V}}}$.  
     Moreover, 
     the maps $\pi_n$ and $\varsigma_n$ are isomorphisms for all $n\geq d_{\mscr V}$, and the map $\mathrm{sym}_{\msf S_N}\varphi_{N,n}\colon\allowbreak\vct V_n^{\msf S_n}\to\vct V_N^{\msf S_N}$ and $\varphi_{N,n}^\star\colon\vct V_N^{\msf S_N}\to\vct V_n^{\msf S_n}$, are isomorphisms whenever $N\succeq n \geq d_{\mscr V}$.
\end{proposition}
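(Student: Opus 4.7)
The plan is to upgrade Proposition~\ref{prop:finite_dim_free_invariants} from the inequality $\dim\varprojlim\vct V_n^{\msf S_n}\leq \dim\vct V_{d_{\mscr V}}^{\msf S_{d_{\mscr V}}}$ in the $\mscr D$-case to an equality, and then leverage the duality of Proposition~\ref{prop:free-duality} together with the isomorphism from Proposition~\ref{prop:free_sym_and_free_descr_isom} to obtain the stabilization statements for the projections and embeddings. The $\mscr Z$-case is already covered by~\cite[Prop.~2.9, Thm.~2.11]{levin2023free} as invoked in the proof of Proposition~\ref{prop:finite_dim_free_invariants}, so the only new content is the $\mscr D$-case.

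First I would prove the dimension equality. For a $\mscr D$-sequence $\mscr V=\mc F(\mscr D)$ of degree $d_{\mscr V}$, the pair $(\mscr V,\mc F(\mscr Z))$ is an $\FS$-pair, so Proposition~\ref{prop:free_sym_and_free_descr_isom} yields an isomorphism $\varinjlim_{\mc F(\mscr Z)}\vct V_n^{\msf S_n}\cong\varprojlim_{\mscr V}\vct V_n^{\msf S_n}$. Combined with the $\mscr Z$-case equality $\dim\varinjlim_{\mc F(\mscr Z)}\vct V_n^{\msf S_n}=\dim\vct V_{d_{\mscr V}}^{\msf S_{d_{\mscr V}}}$, this gives the equality for $\varprojlim_{\mscr V}\vct V_n^{\msf S_n}$, and Proposition~\ref{prop:free-duality} transfers it to $\varinjlim_{\mscr V}\vct V_n^{\msf S_n}$. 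Furthermore, the stabilization $\dim\vct V_n^{\msf S_n}=\dim\vct V_{d_{\mscr V}}^{\msf S_{d_{\mscr V}}}$ for all $n$ past some threshold, which holds for $\mc F(\mscr Z)$ by~\cite{levin2023free}, automatically holds for $\mscr V$ since both consistent sequences share the same underlying vector spaces.

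Next I would argue that $\varsigma_n$ is an isomorphism for all $n\succeq k$ for a suitable threshold $k$. Picking a finite basis of $\varinjlim_{\mscr V}\vct V_n^{\msf S_n}$ represented by invariants $w_{k_i}\in\vct V_{k_i}^{\msf S_{k_i}}$, I choose $k$ to be a common upper bound of $\{k_i\}$ in the poset that also exceeds the stabilization threshold above. Then for $n\succeq k$ every basis element is $\varsigma_n(\mathrm{sym}_n^{\mscr V}w_{k_i})$ by the equivalence relation defining $\varinjlim$, so $\varsigma_n$ is surjective, and the equality $\dim\vct V_n^{\msf S_n}=\dim\varinjlim_{\mscr V}\vct V_n^{\msf S_n}$ forces $\varsigma_n$ to be an isomorphism. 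A direct computation with the pairing of Proposition~\ref{prop:free-duality} shows $\langle\varsigma_n(v),(u_m)\rangle=\langle v,u_n\rangle=\langle v,\pi_n((u_m))\rangle$, so $\pi_n$ is the adjoint of $\varsigma_n$ under this duality and is therefore also an isomorphism.

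Finally, the definition of freely-described elements gives $\pi_n=\varphi_{N,n}^\star\circ\pi_N$ for $N\succeq n$, and the equivalence relation defining $\varinjlim$ gives $\varsigma_n=\varsigma_N\circ\mathrm{sym}_{\msf S_N}\varphi_{N,n}$. Combining these identities with the preceding step, for $N\succeq n\succeq k$ I obtain $\varphi_{N,n}^\star=\pi_n\circ\pi_N^{-1}$ and $\mathrm{sym}_{\msf S_N}\varphi_{N,n}=\varsigma_N^{-1}\circ\varsigma_n$ as compositions of isomorphisms, completing the proof. The main obstacle is the first step, the $\mscr D$-case dimension equality, which crucially exploits Proposition~\ref{prop:free_sym_and_free_descr_isom} and hence the de Finetti theorems developed in Section~\ref{sec:deFinetti}; once the dimensions are known to match, the remaining steps reduce to elementary linear algebra.
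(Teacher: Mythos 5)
Your proof is correct and follows essentially the same route as the paper's: the dimension equality for the $\mscr D$-case is deduced from the $\mscr Z$-case (cited from the representation-stability literature) together with Proposition~\ref{prop:free_sym_and_free_descr_isom} applied to the $\FS$-pair $(\mc F(\mscr D),\mc F(\mscr Z))$, and the stabilization of the maps is then established by fixing a basis of the direct limit, taking a common upper bound $k$ in the poset beyond the stabilization threshold, and invoking dimension counting. The only stylistic difference is that you package the final step through $\varsigma_n$, $\pi_n$, and the duality of Proposition~\ref{prop:free-duality}, whereas the paper argues directly that $\{\mathrm{sym}_n^{\mscr V}v_{k_i}\}$ forms a basis for $\vct V_n^{\msf S_n}$ and then takes adjoints; the two formulations are equivalent.
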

We are now in a position to prove Theorem~\ref{thm:free_sym_bds}.

\begin{proof}[Proof (Theorem~\ref{thm:free_sym_bds}).]
    The existence and uniqueness of the freely-symmetrized element $(\mathrm{sym}_n^{\mscr L}q_k)\in\varinjlim_{\mscr L}\RR[\vct V_n]^{\msf S_n}_{\leq d}$ satisfying~\eqref{eq:representation_eqn} follows by applying Propositions~\ref{prop:free_sym_and_free_descr_isom} and~\ref{prop:calc_for_stability} to the $\FS$-pair $(\RR[\mscr U]_{\leq d},\allowbreak\RR[\mscr L]_{\leq d})$ of degree $dD$, and observing that if $q\in\RR[\vct V_k]$ then $(\mc P_{\leq d}(L_{k,n})^\star q)(x)=q(L_{k,n}x)$. 

    To obtain~\eqref{eq:FDM}, note that 
    \begin{equation*}
        u_\infty = \inf_n \inf_{x\in\Omega_n}\mbb Eq_k(L_{k,n}x) = \inf_{(\mu_n)\in\varprojlim_{\mscr L}\mc P(\Omega_n)^{\msf S_n}}\mbb E_{\mu_k}q_k \quad \textrm{s.t. } (\mu_n=\mathrm{Law}(L_{n,m}x))_n \textrm{ for some } x\in\Omega_m.
    \end{equation*}
    Since the objective is linear in $(\mu_n)$ and continuous in the weak topology on $\mc P(\Omega_k)^{\msf S_k}$, we can further optimize over limits of mixtures of such freely-described measures, which yields all of $\varprojlim_{\mscr L}\mc P(\Omega_n)^{\msf S_n}$ by Theorems~\ref{thm:DeFin_general_Zseq} and~\ref{thm:dual_deFin_Pseq} applied to the $\FS$-pair $(\mscr U,\mscr L)$.

    Next, the optimal values $\ell_n$ are increasing by~\eqref{eq:lower-bound-fs-pops}. They are lower bounds for $u_{\infty}$ because whenever $k\preceq_{\mscr L} n$ we have $\mathrm{sym}_n^{\mscr L}q_k=\mathrm{sym}_{\msf S_n}(q_k\circ\psi_{n,k}^\star )$ so
    \begin{equation}\begin{aligned}\label{eq:free_sym_lower_bd_proof}
        \ell_n &= \inf_{x\in\Omega_n}\mathrm{sym}_{\msf S_n}(q_k\circ\psi_{n,k}^\star ) = \inf_{\mu\in\psi_{n,k}^\star \mc P(\Omega_n)^{\msf S_n}}\mbb E_{\mu}q_k\leq \inf_{(\mu_n)\in\varprojlim_{\mscr L}\mc P(\Omega_n)^{\msf S_n}}\mbb E_{\mu_k}q_k=u_{\infty},
    \end{aligned}\end{equation}
    where the inequality follows since if $(\mu_n)\in\varprojlim_{\mscr L}\mc P(\Omega_n)^{\msf S_n}$ then $\mu_n\in\mc P(\Omega_n)^{\msf S_n}$ and $\mu_k=\psi_{n,k}^*\mu_n$.
    Finally, we prove the claimed convergence rates, which also imply that $\ell_\infty=u_\infty$. For each $x\in\Omega_n$ 
    let $X=\Pi_n x$ where $\Pi_n\in\msf S_n$ is a uniformly random permutation, and let $L_{k,n}$ be independent of $X$. For any $k\preceq_{\mscr L} n$ we have
    \begin{equation}\label{eq:free_poly_ptwise_bd}
        |p_n(x) - \mathrm{sym}_n^{\mscr L}q_k(x)| = \left|\mbb Eq_k(L_{k,n}X) - \mbb Eq_k(\psi_{n,k}^\star X)\right| \leq \begin{cases}
            \|\mathrm{Law}(L_{k,n}X) - \mathrm{Law}(\psi_{n,k}^\star X)\|_{\mathrm{TV}}\|q_k\|_{\Omega_k},\\ 
            \|\mbb E\psi_{n,k}^\star X^{\otimes \leq d} - \mbb EL_{k,n} X^{\otimes \leq d}\|_1\|q_k\|_{c},
        \end{cases}
    \end{equation}
    since $q_k(x)=\langle Q_k, x^{\otimes \leq d}\rangle$ where $x^{\otimes\leq d}=\bigoplus_{i=0}^dx^{\otimes i}$ and $Q_k$ is a tensor whose entries are either zero or equal to coefficients of $q_k$.
    If $\mscr U$ is a $\mscr D$-sequence then $\mscr L$ is a $\mscr Z$-sequence and Theorem~\ref{thm:DeFin_general_Zseq} gives $\|\mathrm{Law}(L_{k,n}X) - \mathrm{Law}(\psi_{n,k}^\star X)\|_{\mathrm{TV}}\leq \frac{k(k-1)}{n}$, while if $\mscr U$ is a $\mscr Z$-sequence then $\mscr L$ is a $\mscr D$-sequence and Theorem~\ref{thm:dual_deFin_Pseq} gives 
    $\|\mbb E\psi_{n,k}^\star X^{\otimes\leq d} - \mbb EL_{k,n} X^{\otimes\leq d}\|_1\leq \frac{d(d+1)D(dD-1)}{n}\max\{1,\|x\|_1^d\}$ since $\|X^{\otimes\leq d}\|_1=\sum_{i=0}^d\|x\|_1^i\leq (d+1)\max\{1,\|x\|_1^d\}$.
    Since
    \begin{equation*}
        u_n-\ell_n = \inf_{x\in\Omega_n}p_n(x) - \inf_{x\in\Omega_n}\mathrm{sym}_n^{\mscr L}q_k(x)\leq \sup_{x\in\Omega_n}|p_n(x)-\mathrm{sym}_n^{\mscr L}q_k(x)|,
    \end{equation*}
    this proves both claimed rates of convergence.
\end{proof}

\begin{remark}[Rates for non-polynomial costs]\label{rmk:non_poly}
    We briefly discuss extending Theorem~\ref{thm:free_sym_bds} to non-polynomial cost functions. If $(p_n)$ is freely-described but not polynomial, there might not exist a function $q_k\colon\Omega_k\to\RR$ satisfying~\eqref{eq:representation_eqn}. For example, the sequence of Shannon entropies $p_n(x)=-\sum_ix_i\log(x_i)$ is freely-described over $\mscr Z$ (with the convention $0\log 0=0$), but $p_n(\mathbbm{1}_n/n)=\log n\to\infty$ even though $L_{k,n}\mathbbm{1}_n/n\overset{d}{=}\frac{1}{n}\mathrm{Multinom}(n,k,\mathbbm{1}_k/k)$ converges weakly to $\mathbbm{1}_k/k$ for each $k$. 
    However, if a suitably regular $q_k$ does exist then the results of Theorem~\ref{thm:free_sym_bds} continue to hold with small modifications. Specifically, assume a bounded $q_k$ exists if $\mscr U$ is a $\mscr D$-sequence, or a Lipschitz-continuous $q_k$ exists if $\mscr U$ is a $\mscr Z$-sequence. Then the above proof shows that the reformulation~\eqref{eq:FDM} still holds and that $\ell_n=\inf_{x\in\Omega_n}\mathrm{sym}_n^{\mscr L}q_k$ give monotonically increasing and convergent lower bounds. Likewise, the proof of the $O(1/n)$ rate of convergence~\eqref{eq:rate_coFS} for freely-described problems over $\mscr D$-sequences continues to hold. However, the $O(1/n)$ rate~\eqref{eq:rate_FS} for freely-described problems over $\mscr Z$-sequences no longer holds. Instead, we get the following $O(1/\sqrt{n})$ rate of convergence
    \begin{equation}\label{eq:old_sqrt_n_rate}
        u_n-\ell_n\leq 2DL_{q_k}R\sqrt{\frac{k(k-1)}{n/k}},
    \end{equation}
    whenever $k|n$, where $L_{q_k}$ is the Lipschitz constant of $q_k$ with respect to the $\ell_1$-norm on $\Omega_k$.
    The bound~\eqref{eq:old_sqrt_n_rate} is a consequence of the similar $W_1$-bound in our dual de Finetti theorem (Theorem~\ref{thm:dual_deFin_Pseq}). This $O(1/\sqrt{n})$ rate is tight for general Lipschitz $q_k$. For example, if $q_2(x)=|x_1-x_2|$ and $\Omega_n=\Delta^{n-1}$ then $p_n(x)=\mbb Eq_2(L_{2,n}x) = \mbb E\left|\sum_{i=1}^n\varepsilon_ix_i\right|$ where $\varepsilon_i$ are iid uniformly random signs. In this case, we have $\ell_{2n} = 0$ while $u_{2n}\geq \frac{1}{2\sqrt{n}}$ since $p_{2n}(x)\geq \frac{\|x\|_2}{\sqrt{2}}$ by Khintchine's inequality.
\end{remark}

\subsection{Any-Dimensional Certificates of Nonnegativity}\label{sec:any_dim_nonneg_cones}
We can rewrite $u_\infty$ and our lower bounds $\ell_n$ for it in terms of any-dimensional certificates of nonnegativity.
Suppose $\mscr U$ is a $\mscr Z$- or $\mscr D$-consistent sequence, the sets $(\Omega_n\subseteq\vct V_n)$ are $\FS$-compatible, and $u_n=\inf_{x\in\Omega_n}p_n(x)$ is a freely-described problem over $\mscr U$ of degree at most $d$. 
Define the nonnegativity cones
\begin{equation}\label{eq:nonneg_cones}
    \mc K_n = \{p\in\RR[\vct V_n]_{\leq d}^{\msf S_n}: p|_{\Omega_n}\geq0\},
\end{equation}
where we suppress the dependence of $\mc K_n$ on the sets $(\Omega_n)$ and degree $d$, held fixed throughout this section.
We can rewrite each optimal value $u_n$ in the sequence as the greatest lower bound on $p_n$ over $\Omega_n$, namely,
\begin{equation}\label{eq:un_dual_form}
    u_n = \sup_{\gamma\in\RR}\gamma \quad \textrm{ s.t. } p_n-\gamma\in\mc K_n.
\end{equation}
The limiting optimal value can similarly be written as
\begin{align}
    u_{\infty} &= \sup_{\gamma\in\RR}\gamma \quad \textrm{ s.t. } p_n-\gamma\in\mc K_n \textrm{ for all } n\in\NN, \nonumber\\
    &= \sup_{\gamma\in\RR}\gamma \quad \textrm{ s.t. } (p_n-\gamma)_n\in\bigcap_{n\in\NN}\pi_n^{-1}(\mc K_n),\label{eq:u_inf_reform}
\end{align}
where $\pi_k\colon \varprojlim_{\mscr U}\RR[\vct V_n]^{\msf S_n}_{\leq d}\to \RR[\vct V_k]^{\msf S_k}_{\leq d}$ sends $(p_n)\mapsto p_k$.
The latter intersection is precisely the collection of freely-described polynomials that are nonnegative over $(\Omega_n)$ in all dimensions, denoted
\begin{equation}\label{eq:fd_nn_cone}
    \varprojlim_{\mscr U}\mc K_n = \left\{(p_n)\in\varprojlim_{\mscr U}\RR[\vct V_n]_{\leq d}^{\msf S_n}:p_n|_{\Omega_n}\geq0 \textrm{ for all } n\right\} = \bigcap_n\pi_n^{-1}(\mc K_n),
\end{equation}
which is a convex cone in the finite-dimensional vector space $\varprojlim_{\mscr U}\RR[\vct V_n]_{\leq d}^{\msf S_n}$.

Producing lower bounds on $u_{\infty}$ amounts to optimizing over subcones of $\varprojlim_{\mscr U}\mc K_n$, and one family of such subcones is furnished by Theorem~\ref{thm:free_sym_bds}. Indeed, if $h_m\in\mc K_m$ is a fixed-dimensional polynomial nonnegative over $\Omega_m$, then its image under the de Finetti map $(\mbb Eh_m\circ L_{m,n})_n\in\varprojlim_{\mscr U}\mc K_n$ is a freely-described polynomial nonnegative in all dimensions, meaning that $\mbb Eh_m\circ L_{m,n}|_{\Omega_n}\geq0$ for all $n\in\NN$. Therefore, for each fixed $m\in\NN$ we get the lower bound
\begin{equation}\label{eq:nn_cone_lower_bd}
    u_{\infty}\geq \sup_{\gamma\in\RR}\gamma \quad \textrm{ s.t. } (p_n-\gamma) = (\mbb Eh_m\circ L_{m,n}) \textrm{ for some } h_m\in\mc K_m.
\end{equation}
We now argue that the lower bound in~\eqref{eq:nn_cone_lower_bd} is precisely $\ell_m$ from Theorem~\ref{thm:free_sym_bds}. Indeed, recall from Theorem~\ref{thm:free_sym_bds} that $(p_n)=(\mbb Eq_k\circ L_{k,n})$ for a unique freely-symmetrized element $(\mathrm{sym}_n^{\mscr L}q_k)$. Therefore, if $m\succeq_{\mscr L}k$ then 
\begin{equation*}
    \begin{aligned}
        (p_n-\gamma)=(\mbb Eh_m\circ L_{m,n}) &\iff (\mbb E(q_k-\gamma)\circ L_{k,n}) = (\mbb Eh_m\circ L_{m,n})\\
        &\iff (\mathrm{sym}_n^{\mscr L}(q_k-\gamma))=(\mathrm{sym}_n^{\mscr L}h_m)\\
        &\iff \mathrm{sym}_m^{\mscr L}q_k - \gamma = h_m,
    \end{aligned}
\end{equation*}
where the first equivalence follows by definition of $q_k$, the second from the fact that the de Finetti map defines an isomorphism as in Proposition~\ref{prop:free_sym_and_free_descr_isom}, and the third from Definition~\ref{def:free_elements} and the assumption that $m\succeq_{\mscr L}k$. 
We also note that $\mathrm{sym}_mq_k-\gamma =h_m$ for some $h_m\in\mc K_m$ precisely when $\mathrm{sym}_mq_k-\gamma \in \mc K_m$. 
Thus, the lower bound in~\eqref{eq:nn_cone_lower_bd} becomes
\begin{equation}\label{eq:ellm_dual_form}
    \ell_m=\sup_{\gamma\in\RR}\gamma \quad \textrm{ s.t. } \mathrm{sym}_mq_k-\gamma\in \mc K_m.
\end{equation}
Note that the only difference between the expression~\eqref{eq:ellm_dual_form} for $\ell_m$ and the expression~\eqref{eq:un_dual_form} for $u_n$ is that $p_n$ is replaced by $\mathrm{sym}_mq_k$, again highlighting the fact that we only modify the cost function but keep the same constraints in our construction of lower bounds.

Similarly, any convex relaxation of $\ell_m$ obtained by replacing $\mc K_m$ with a tractable subcone of it, including SOS~\cite{SOS_chapter} and relative entropy-based relaxations~\cite{murray2021newton}, can be interpreted in this way as searching over lower bounds $\gamma$ that can be certified by a subcone of freely-described polynomials nonnegative in all dimensions.

Finally, taking the greatest lower bound that can be certified by letting the dimension $m$ vary above, we obtain the lower bound
\begin{equation*}
    u_{\infty}\geq \ell_{\infty} = \sup_{\gamma\in\RR}\gamma \quad \textrm{ s.t. } (p_n-\gamma) = (\mbb Eh_m\circ L_{m,n}) \textrm{ for some } (\mathrm{sym}_n^{\mscr L}h_m)\in\bigcup_n\varsigma_n(\mc K_n),
\end{equation*}
where $\varsigma_k\colon \RR[\vct V_k]^{\msf S_k}_{\leq d}\to \varinjlim_{\mscr L}\RR[\vct V_n]^{\msf S_n}_{\leq d}$ sends $h_k\mapsto (\mathrm{sym}_n^{\mscr L}h_k)_{n\succeq_{\mscr L} k}$. 
Just like our reformulation of $u_{\infty}$ in~\eqref{eq:u_inf_reform}, the bound $\ell_{\infty}$ involves a limiting cone in a finite-dimensional space of polynomials, this time the freely-symmetrized nonnegativity cone
\begin{equation}\label{eq:fs_nn_cone}
    \varinjlim_{\mscr L}\mc K_n = \left\{(\mathrm{sym}_n^{\mscr L}h_m)_{m\preceq_{\mscr L} n}\in\varinjlim_{\mscr L}\RR[\vct V_n]_{\leq d}^{\msf S_n}: h_m|_{\Omega_m}\geq0\right\} = \bigcup_n\varsigma_n(\mc K_n),
\end{equation}
contained in $\varinjlim_{\mscr L}\RR[\vct V_n]_{\leq d}^{\msf S_n}$.  

The de Finetti map $(\mathrm{sym}_n^{\mscr L}h_m)\mapsto (\mbb Eh_m\circ L_{m,n})$ sends the cone $\varinjlim_{\mscr L}\mc K_n$ into the cone $\varprojlim_{\mscr U}\mc K_n$.
The fact that $u_{\infty}=\ell_{\infty}$ can then be derived from our present geometric perspective by proving that the image of this map is dense. We proceed to prove this, along with a few basic geometric properties of the two limit cones.
%

\begin{theorem}\label{thm:any_dim_nonneg_cones}
    Suppose $(\mscr U,\mscr L)$ is a $\FS$-pair, let $(\Omega_n)$ be $\FS$-compatible, fix degree $d\in\NN$, and denote by $\mc K_n$ the nonnegativity cones~\eqref{eq:nonneg_cones}.
    \begin{enumerate}[font=\emph, align=left]
        \item[(Density)] The isomorphism $(\mathrm{sym}_n^{\mscr L}q_k)\mapsto (\mbb Eq_k\circ L_{k,n})$ from Proposition~\ref{prop:free_sym_and_free_descr_isom} maps $\varinjlim_{\mscr L}\mc K_n$ into a dense subset of $\varprojlim_{\mscr U}\mc K_n$. 
    
        \item[(Monotonicity)] We have
        \begin{equation}\label{eq:invar_cone_inclusions}
            \begin{aligned}
                &\pi_n^{-1}(\mc K_n)\supset \pi_N^{-1}(\mc K_N)\ \textrm{ if } n\preceq_{\mscr U} N, & \textrm{and} && \varsigma_n(\mc K_n)\subseteq \varsigma_N(\mc K_N)\ \textrm{ if } n\preceq_{\mscr L} N.
            \end{aligned}
        \end{equation}

        \item[(Geometry)] The cones $\varprojlim_{\mscr U}\mc K_n$ and $\varinjlim_{\mscr L}\mc K_n$ are convex, and the former is closed. Moreover, these cones are pointed or have nonempty interior if the cones $\mc K_n$ are pointed or have nonempty interior, respectively.
    \end{enumerate}
\end{theorem}
For the density result, closures are taken with respect to the usual unique Hausdorff TVS topology on a finite-dimensional space (e.g., induced by a norm on the coefficients in a basis). 
The cone $\mc K_n$ is pointed and has nonempty interior if $\Omega_n$ is compact and has nonempty interior.
\begin{proof}
    Note that if $q_k|_{\Omega_k}\geq0$ then $\mbb Eq_k(L_{k,n}x)\geq0$ for all $x\in\Omega_n$ since $L_{k,n}x\in\Omega_k$ almost surely. This shows that $\varinjlim_{\mscr L}\mc K_n$ is indeed mapped into $\varprojlim_{\mscr U}\mc K_n$. Moreover, we argue next that this image is dense.  If $(p_n)\in\varprojlim_{\mscr U}\mc K_n$ then for each $i\in\NN$ we have
    $(\mathrm{sym}_n^{\mscr L}p_i)_{n\succeq_{\mscr L} i}\in\varinjlim_{\mscr L}\mc K_n$, which is mapped to $(\mbb Ep_i
    \circ L_{i,n})\in\varprojlim_{\mscr U}\mc K_n$. By the proof of Proposition~\ref{prop:free_sym_and_free_descr_isom}, we have $\lim_{i\to\infty}\mbb Ep_i\circ L_{i,n}=p_n$ for each $n\in\NN$, which implies $(\mbb Ep_i\circ L_{i,n})\to (p_n)$ in the usual topology on the finite-dimensional space $\varprojlim_{\mscr U}\RR[\vct V_n]^{\msf S_n}_{\leq d}$. This proves the claimed density.
    
    For monotonicity, note that if $(p_n)\in\varprojlim_{\mscr U}\RR[\vct V_n]_{\leq d}^{\msf S_n}$ satisfies $p_N|_{\Omega_N}\geq0$ and $n\preceq_{\mscr U} N$, then $p_n|_{\Omega_n}=p_N\circ\varphi_{N,n}|_{\Omega_n}\geq0$ since $\varphi_{N,n}\Omega_n\subseteq \Omega_N$. Similarly, if $q_k|_{\Omega_k}\geq0$ and $k\preceq_{\mscr L} n$ then $\mathrm{sym}_n^{\mscr L}q_k|_{\Omega_n}(x) = \mbb E[q_k(\psi_{n,k}^\star \Pi_n x)]\geq0$ since $\psi_{n,k}^*Gx\in\Omega_k$ almost surely by compatibility of $(\Omega_n)$. 

    Since $\pi_n$ and $\varsigma_n$ are isomorphisms for all large $n$ by Proposition~\ref{prop:calc_for_stability} and as $\mc K_n$ is closed and convex, we conclude that $\pi_n^{-1}(\mc K_n)$ and $\varsigma_n(\mc K_n)$ are both closed and convex.  Therefore, the expressions~\eqref{eq:fd_nn_cone} and~\eqref{eq:fs_nn_cone} imply that $\varprojlim_{\mscr U}\mc K_n$ and $\varinjlim_{\mscr L}\mc K_n$ are both convex, that the former is closed. If all the $\mc K_n$ are pointed then $\pi_n^{-1}(\mc K_n)$ is pointed for all large $n$, hence~\eqref{eq:fd_nn_cone} shows that $\varprojlim_{\mscr U}\mc K_n$ is pointed. This implies $\varinjlim_{\mscr L}\mc K_n$ is pointed as it embeds linearly into $\varprojlim_{\mscr U}\mc K_n$. Similarly, if all the $\mc K_n$ have nonempty interior then $\varsigma_n(\mc K_n)$ has nonempty interior for all large $n$, hence~\eqref{eq:fs_nn_cone} shows that $\varinjlim_{\mscr L}\mc K_n$ has nonempty interior, which in turn implies that $\varprojlim_{\mscr U}\mc K_n$ has nonempty interior. 
\end{proof}
The last two results in Theorem~\ref{thm:any_dim_nonneg_cones} were proved for $\mscr U=\mscr Z$ in~\cite[Thm.~3.2]{acevedo2024symmetric} and for $\mscr U=\mscr D$ in~\cite[\S1.1]{acevedo2024power}. Our proof for the former case is significantly simpler thanks to the first result in Theorem~\ref{thm:any_dim_nonneg_cones}, and our proof applies more generally to any $\FS$-pair.

\subsection{Proofs for Section~\ref{sec:free_sym_bds}}\label{sec:free_sym_bds_proofs}
In this section, we prove Propositions~\ref{prop:free_sym_and_free_descr_isom} and~\ref{prop:calc_for_stability} from Section~\ref{sec:free_sym_bds}.
 \begin{proof}[Proof (Proposition~\ref{prop:free_sym_and_free_descr_isom}).]
    First, observe that the given map is well-defined and yields freely-described elements. Indeed, stated in terms of de Finetti maps, Lemma~\ref{lem:unif_map_invars} states that $L_{n,m}^\star \psi_{n,k}\overset{d}{=}L_{k,m}^\star$ whenever $k\preceq_{\mscr L} n$ and $\varphi_{m,k}^\star L_{n,m}^\star\overset{d}{=}L_{n,k}^\star$ whenever $k\preceq_{\mscr U} m$.    
    The former property shows that two polynomials which correspond to the same freely-symmetrized elements have the same images, and the latter shows that the images of our map are indeed freely-described over $\mscr U$.
    
    Second, it suffices to prove the claim when $\mscr U=\mc F(\mscr Z)$ is a $\mscr Z$-sequence and $\mscr L=\mc F(\mscr D)$ is the corresponding $\mscr D$-sequence, as the opposite case follows from this one via duality by Proposition~\ref{prop:free-duality}.
%
    Third, we show that the given map is surjective, which suffices for the proof as $\dim\varinjlim_{\mscr L}\vct V_n^{\msf S_n}\leq \dim\varprojlim_{\mscr U}\vct V_n^{\msf S_n}$ by Proposition~\ref{prop:finite_dim_free_invariants}.
    To this end, fix a freely-described element $(v_n)\in\varprojlim_{\mscr U}\vct V_n^{\msf S_n}$ and let $\Omega_n = \bigcup_{m\in\NN}\mathrm{supp}(L_{m,n}^\star v_m) = \bigcup_{m\in\NN}\{\rho(f_{m,n})v_m: f_{m,n}\colon[n]\to[m]\}$, which is $\FS$-compatible with respect to the reversed $\FS$-pair $(\mscr L,\mscr U)$ by construction. Note that each $\Omega_n$ is finite. Indeed, any map $f_{m,n}\colon[n]\to[m]$ decomposes as $f_{m,n}=\sigma_m\circ\iota_{m,r} \circ \tilde f_{r,n}$ where $\tilde f_{r,n}\colon[n]\to[r]$ is surjective, $\iota_{m,r}\colon[r]\to[m]$ is the usual inclusion, and $\sigma_m\in\msf S_m$ is a permutation. Since $(v_m)$ is freely-described, we have $\rho(f_{m,n})v_m=\rho(\tilde f_{r,n})\rho(\iota_{m,r})\rho(\sigma_m)v_m = \rho(\tilde f_{r,n})v_r$ where we used the fact that $\rho(\iota_{m,r})=\varphi_{m,r}^\star$ (see Section~\ref{sec:deFinetti_coFS}). Thus, we have $\Omega_n = \bigcup_{r\leq n}\{\rho(f_{r,n})v_r: f_{r,n}\colon[n]\to[r]\textrm{ surjective}\}$.
    
    Then $(v_n)$ can be identified with the freely-described measure $(\delta_{v_n})\in\varprojlim_{\mscr U}\mc P(\Omega_n)^{\msf S_n}$ over $\mscr U$. By Theorem~\ref{thm:DeFin_general_Zseq}, the random variables $L_{k,n}^\star v_{k}$ converge in total variation to $\delta_{v_n}$ for any fixed $n$ as $k\to\infty$. Taking expectations and using the finiteness of $(\Omega_n)$, we conclude that $\lim_{k\to\infty}\mbb EL_{k,n}^\star v_{k} = v_n$ for each $n$. Since $(\mbb EL_{k,n}^\star v_{k})$ is the image under the given map of $(\mathrm{sym}_{n}^{\mscr L}v_{k})_{k|n}$, this shows that the given map has a dense image. Since its image is a finite-dimensional space it must be further closed. Thus, the given map is surjective. 
 \end{proof}
Combining the isomorphism from Proposition~\ref{prop:free_sym_and_free_descr_isom} with Proposition~\ref{prop:finite_dim_free_invariants} yields Proposition~\ref{prop:calc_for_stability}.
  \begin{proof}[Proof (Proposition~\ref{prop:calc_for_stability}).]
    This result was proved for $\mscr Z$-sequences in~\cite[Prop.~2.9, Thm.~2.11]{levin2023free}, so we turn to proving it for $\mscr D$-sequences $\mscr V$. It suffices to prove that $\varphi_{N,n}^\star\colon\vct V_N^{\msf S_N}\to\vct V_n^{\msf S_n}$ is an isomorphism whenever $n|N$ and $n\geq d_{\mscr V}$, since then its adjoint $\mathrm{sym}_n^{\mscr V}\colon\vct V_n^{\msf S_n}\to\vct V_N^{\msf S_N}$ is an isomorphism as well, and hence both $\pi_n$ and $\varsigma_n$ are isomorphisms for $n\geq d_{\mscr V}$.
    
    Suppose first that $\mscr V = \mscr D^{\otimes d}$ so $\vct V_n=(\RR^n)^{\otimes d}$ and $\varphi_{N,n}=\delta_{N,n}^{\otimes d}$. We argue that $\varphi_{N,n}^\star\colon \vct V_N^{\msf S_N}\to\vct V_n^{\msf S_n}$ is an isomorphism whenever $n|N$ and $n\geq d$. For each $I\in[n]^d$ we consider a partition $P_I$ of $[d]$ where $i,j\in[d]$ are in the same part if and only if $I_i=I_j$. In other words, the partition $P_I$ encodes the equality pattern of the multi-index $I$.
    Then for $n\geq d$ a basis for $\vct V_n^{\msf S_n}$ is given by $\{e_P^{(n)}=\sum_{I: P_I=P}e_I\}_P$ where $P$ ranges over partitions of $[d]$, since $I,J\in[n]^d$ are in the same $\msf S_n$ orbit if and only if $P_I=P_J$ in this case. For two partitions $P,P'$ of $[d]$, we denote $P \preceq P'$ if $P'$ is a refinement of $P$, meaning each block in $P$ is a union of blocks in $P'$. Note that $\varphi_{N,n}^\star e_I = e_J$ where if $I=(i_1,\ldots,i_d)$ then $J=(\lceil i_1/(N/n)\rceil,\ldots,\lceil i_d/(N/n)\rceil)$, so $P_J\preceq P_I$. Furthermore, for any partition $P$ and multi-index $J\in[n]^d$ with $P_J = P$, note that $I=(N/n)J\in[N]^d$ satisfies $P_I = P$ and $\varphi_{N,n}^\star e_I=e_J$. 
    Thus, we have $\varphi_{N,n}^\star e_P^{(N)} = \sum_{P'\preceq P}c_{P'}e_{P'}^{(n)}$ for some $c_{P'}\in \NN$ and $c_P>0$. We conclude that the matrix of $\varphi_{N,n}^\star\colon\vct V_N^{\msf S_N}\to\vct V_n^{\msf S_n}$ with respect to the above partition-indexed bases is lower-triangular with respect to the refinement partial order and with strictly positive entries on the diagonal, so it is invertible as claimed. This proves the claim for $\mscr V=\mscr D^{\otimes d}$.

    For a general $\mscr D$-sequence $\mscr V=\mc F(\mscr D)$ of degree $d_{\mscr V}$, we can embed $\vct V_n\subseteq\bigoplus_{m=1}^M(\RR^n)^{\otimes d_m}$ for some $M\in\NN$ and $\max_md_m=d_{\mscr V}$, where $\varphi_{N,n}$ is the restriction of $\bigoplus_{m=1}^M\delta_{N,n}^{\otimes d_m}$ to $\vct V_n$ so $\mathrm{sym}_N^{\mscr V}\colon \vct V_n^{\msf S_n}\to\vct V_N^{\msf S_N}$ is injective whenever $N\succeq n\geq d_{\mscr V}$. By the result of~\cite{levin2023free} applied to the corresponding $\mscr Z$-sequence $\mc F(\mscr Z)$, we have $\dim \vct V_n^{\msf S_n}=\dim\vct V_{d_{\mscr V}}^{\msf S_{d_{\mscr V}}}$ for all $n\geq d_{\mscr V}$. We conclude that $\mathrm{sym}_N^{\mscr V}$ and $\varphi_{N,n}^\star$ are isomorphisms of invariants for $n\geq d_{\mscr V}$, as claimed.
 \end{proof}


\section{Illustrations of Our Framework}\label{sec:construct_bds}

In this section we present a systematic framework for obtaining bounds on freely-described POPs by leveraging Theorem~\ref{thm:free_sym_bds} in Section~\ref{sec:free_sym_bds}.  We describe the steps of our method here and we illustrate our approach in four consistent sequences arising in different applications.

Concretely, suppose that $(\mscr U,\mscr L)$ is a $\FS$-pair of degree $D$ and consider freely-described problems $u_n=\inf_{x\in\Omega_n}p_n(x)$ over $\mscr U$, where $\deg p_n=d$ and the constraints $(\Omega_n)$ are $\FS$-compatible (i.e., embed into each other in $\mscr U$ and project into each other in $\mscr L$). 
We construct freely-symmetrized lower bounds for such problems by carrying out the following steps.
\begin{enumerate}[font=\emph, align=left]
    \item[(Basis for $\mscr L$)] Fix a $k \geq Dd$.  Let $q_{k}^{(1)},\ldots,q_k^{(m)}$ be a basis for $\RR[\vct V_k]_{\leq d}^{\msf S_k}$. 
    

    \item[(Basis for $\mscr U$)] Derive $p_n^{(i)} = \mbb E[q^{(i)}_k \circ L_{k,n}]$ to which the above basis elements map under the de Finetti map. Propositions~\ref{prop:free_sym_and_free_descr_isom} and~\ref{prop:calc_for_stability} imply that $(p_n^{(1)}), \ldots, (p_n^{(m)})$ forms a basis for freely-described polynomials over $\mscr U$ of degree at most $d$.
    
    
    \item[(Dual cost)] Express the freely-described cost in this basis as $p_n = \sum_{i=1}^m c^{(i)} p^{(i)}_n$.  The dual costs for $n\succeq_{\mscr L} k$ are given by $q_n = \sum_{i=1}^m c^{(i)} \mathrm{sym}^{\mscr L}_n q_k^{(i)}$.
\end{enumerate}


The above steps yield the freely-symmetrized bounds $\ell_n = \inf_{x \in \Omega_n} q_n(x)$ for $n \succeq_{\mscr L} k$.  When $\mscr L$ is a $\mscr Z$-sequence, the partial order $\preceq_{\mscr L}$ is just the standard order on $\NN$.  Consequently, it suffices to just perform the above derivation for $k = D d$ to form the lower bounds for all dimensions $n \geq k$.  On the other hand, when $\mscr L$ is a $\mscr D$-sequence, the partial order $\preceq_{\mscr L}$ is the divisibility order, and performing the above three steps for $k = D d$ only yields the bounds $\ell_n$ for $k | n$.  If we seek a lower bound $\ell_n$ when $k \nmid n$, we need to perform the preceding computations by setting $k = n$ in the first step (or for some other $k' \geq D d$ such that $k' | n$).

On a related but distinct point, explicitly forming the polynomials $\mathrm{sym}_n^{\mscr L}q_k^{(i)}$ can also be inconvenient when $\mscr L$ is a $\mscr D$-sequence, in which case embeddings do not map monomials to monomials.  It is then more convenient to directly perform the preceding derivations with $k = n$ in the first step.  As we illustrate in this section, in several applications it is possible to choose convenient bases $(q_k^{(i)})$ in the first step which are explicitly given as simple functions of $k$, and to perform the above three steps for them to obtain $p_n^{(i)}$ as a function of $k$ for all $k$; see Sections~\ref{sec:symmetric_funcs} and~\ref{sec:graph_numbers}.


All the code producing the numerical results in this section is available at: \url{https://github.com/eitangl/anyDimPolyOpt}.


\subsection{Polynomial Mean-Field Games}\label{sec:power_means}
\paragraph{Model problem.} We consider freely-described problems over $\mscr D^{d_g}$ arising as the values of potential mean-field games. 
Mean-field games are limits of games with a large number of identical players; see~\cite{cardaliaguet2010notes} for an introduction.
Formally, in a potential mean-field game with a compact strategy set $\Theta\subseteq\RR^{d_g}$, we consider symmetric games in a growing number $n$ of players with costs $C_1^{(n)},\ldots,C_n^{(n)}$ satisfying $C_{\sigma(i)}^{(n)}(x_{\sigma(1)},\ldots,x_{\sigma(n)})=C_i^{(n)}(x_1,\ldots,x_n)$ for all $\sigma\in\msf S_{n}$ and all $x_1,\ldots,x_n\in\Theta$.  The game is collaborative in that the players jointly incur a cost $\frac{1}{n}\sum_{i=1}^nC_i^{(n)}(x)$ that they seek to minimize, and we are interested in the minimum value that they can achieve in the limit as $n\to\infty$. 
Under suitable assumptions~\cite[Thm.~2.1]{cardaliaguet2010notes}, there exists a single function $C\colon \Theta\times \mc P(\Theta)\to\RR$ such that $\max_{i\in[n]}\sup_{x\in\Theta^n}|C_i^{(n)}(x) - C(x_i, \frac{1}{n}\sum_j\delta_{x_j})|\to0$ as $n\to\infty$. 
The limiting value of the game becomes an optimization problem over measures $\inf_{\mu\in\mc P(\Theta)}\mbb E_\mu[C(X,\mu)]$ where $X \sim \mu$.  We restrict ourselves to polynomial costs of the form $f(x, \mathbb{E}_{\mu}[X^{\alpha_1}], \dots, \mathbb{E}_{\mu}[X^{\alpha_m}])$ for a polynomial $f : \R^{d_g} \times \R^m \rightarrow \R$ and some fixed multi-indices $\alpha_1,\dots,\alpha_m \in \mbb{N}^{d_g}\setminus\{0\}$.  We then have the following sequence of freely-described POPs, similar to the one considered in Section~\ref{sec:case_study}
\begin{equation}\label{eq:un_mfg}
    u_n = \inf_{(x_1,\ldots,x_n)\in\Theta^n} \frac{1}{n} \sum_{i=1}^nf\left(x_i,\frac{1}{n}\sum_{j=1}^nx_j^{\alpha_1},\ldots,\frac{1}{n}\sum_{j=1}^nx_j^{\alpha_m}\right),
\end{equation}
with the associated limiting optimal value of
\begin{equation*}
    u_\infty = \inf_{\mu \in \mathcal{P}(\Theta)} \mathbb{E}_\mu\Big[f(X, \mathbb{E}_\mu [X^{\alpha_1}], \ldots, \mathbb{E}_\mu [X^{\alpha_m}])\Big].
\end{equation*}
Our objective is to obtain lower bounds on $u_\infty$ by identifying the corresponding freely-symmetrized POP over $\mscr Z^{d_g}$.  Note that the vector spaces underlying $\mscr D^{d_g}$ and $\mscr Z^{d_g}$ are $\vct V_n = \R^{d_g \times n}$.

\paragraph{Basis for $\mscr L=\mscr Z^{d_g}$.} Consider a list of multi-indices $\Lambda=(\alpha_1\geq\ldots\geq\alpha_{\mathrm{len}(\Lambda)})$ sorted lexicographically with $\alpha_i\in\mbb{N}^{d_g}\setminus\{0\}$ for all $i$.  For $k \geq \mathrm{len}(\Lambda)$, we define the monomial means
\begin{equation}\label{eq:monomial_means}
    \bar m_k^{(\Lambda)}(x_1,\dots,x_k) = \mathrm{sym}_k^{\mscr Z^{d_g}} x_1^{\alpha_1}\cdots x_{\mathrm{len}(\Lambda)}^{\alpha_{\mathrm{len}(\Lambda)}} = \frac{1}{(k)_{\mathrm{len}(\Lambda)}}\sum_{\substack{i_1,\ldots i_{\mathrm{len}(\Lambda)}\\ \textrm{distinct}}}x_{i_1}^{\alpha_1}\cdots x_{i_{\mathrm{len}(\Lambda)}}^{\alpha_{\mathrm{len}(\Lambda)}},
\end{equation}
where $(k)_r = \frac{k!}{(k-r)!}$, and whose degree is $|\Lambda|=\sum_{i=1}^{\mathrm{len}(\Lambda)}|\alpha_i|$.
The collection $\{\bar m_k^{(\Lambda)}\}_{|\Lambda|\leq d}$ forms a basis for $\RR[\vct V_k]_{\leq d}^{\msf S_k}$ for any $k\geq d$, as can be seen by writing any polynomial in the monomial basis and symmetrizing.
Moreover, we have $\bar m_K^{(\Lambda)}=\mathrm{sym}_K^{\mscr L}\bar m_k^{(\Lambda)}$ whenever $k\leq K$ by construction. Thus, $(\bar m_n^{(\Lambda)})_{n\geq d}$ form a basis for freely-symmetrized polynomials of degree at most $d$ over $\mscr L$, as proved more generally in~\cite{levin2023free}.

\paragraph{Basis for $\mscr U=\mscr D^{d_g}$.} To compute $\mbb E[\bar m_k^{(\Lambda)}\circ L_{k,n}]$ for the de Finetti map $L_{k,n}$, observe that if $x_1,\ldots,x_n \in \R^{d_g}$ then $\mathrm{Law}(L_{k,n} \cdot (x_1,\dots,x_n))=\mu_x^{\otimes k}$ where $\mu_x=\frac{1}{n}\sum_{i=1}^n\delta_{x_i}\in\mc P(\RR^{d_g})$. Therefore, from \eqref{eq:monomial_means}
\begin{equation*}
    \mbb E[\bar m_{k}^{(\Lambda)}(L_{k,n} \cdot (x_1,\dots,x_n))] = {\mbb E}_{\mu_x^{\otimes k}}[x_1^{\alpha_1}\cdots x_{\mathrm{len}(\Lambda)}^{\alpha_{\mathrm{len}(\Lambda)}}] = \prod_{i=1}^{\mathrm{len}(\Lambda)}\mbb E_{\mu_x}[x^{\alpha_i}] = \prod_{i=1}^{\mathrm{len}(\Lambda)}\left(\frac{1}{n}\sum_{j=1}^nx_j^{\alpha_i}\right).
\end{equation*}
By defining the power means as
\begin{equation*}\begin{aligned}
    \bar s_{n}^{(\Lambda)}(x_1,\dots,x_n)=\prod_{i=1}^{\mathrm{len}(\Lambda)}\bar s_{n}^{(\alpha_i)}(x_1,\dots,x_n), && \textrm{where} &&    \bar s_{n}^{(\alpha_i)}(x_1,\dots,x_n)=\frac{1}{n}\sum_{j=1}^nx_j^{\alpha_i},
\end{aligned}\end{equation*}
we have that $\mbb E[\bar m_k^{(\Lambda)}\circ L_{k,n}] = \bar s_{n}^{(\Lambda)}$.  We conclude by Proposition~\ref{prop:free_sym_and_free_descr_isom} that $\{(\bar s_n^{(\Lambda)})\}_{|\Lambda|\leq d}$ is a basis for freely-described polynomials of degree at most $d$ over $\mscr D^{d_g}$.

\paragraph{Dual cost.} Write the freely-described cost $(p_n)$ as $p_n=\sum_{\Lambda} c^{(\Lambda)} \bar s_n^{(\Lambda)}$. Then the dual cost is given by $q_n=\sum_{\Lambda} c^{(\Lambda)} \bar m_n^{(\Lambda)}$ for all $n\geq d$.  The lower bounds on the freely-described problems~\eqref{eq:un_mfg} are then given by $\ell_n = \inf_{X\in\Theta^n}q_n(X)$, which satisfy $\ell_n\leq \ell_{n+1}$ and $u_n-\ell_n\lesssim 1/n$ by Theorem~\ref{thm:free_sym_bds}.

\paragraph{$\FS$-compatible sets.} The sequences of constraint sets $(\Omega_n\subseteq \RR^{d_g \times n})$ for which our framework is applicable consist of $\msf S_n$-invariant compact sets satisfying $(\underbrace{x_1,\dots,x_1}_{k \text{ times}}, \ldots, \underbrace{x_n,\dots,x_n}_{k \text{ times}}) \in \Omega_{nk}$ and $(x_1,\ldots,x_{n-1}) \in \Omega_{n-1}$ if $(x_1,\ldots,x_n) \in\Omega_n$.  An example is a product set $\Omega_n=\Theta^n$ for any compact $\Theta\subseteq\RR^{d_g}$.  For non-examples, neither the sequence of Frobenius norm balls in $\RR^{d_g \times n}$ nor the sequence of row stochastic matrices in $\RR^{d_g \times n}$ are $\FS$-compatible.


\begin{example}\label{ex:mfg}
    For $d_g=1$ and $\Theta=[-1,1]$, consider the polynomial cost $f(x,\mu_1,\mu_2)=5x\mu_1-2x^2
    \mu_1-2x\mu_2-\mu_1$, where $\mu_k=\mbb E_{\mu}[t^k]$. This is a mean-field extension of the non-convex-concave two-player game studied in~\cite[Ex.~3.2]{parrilo2006games}, where we replace one of the players by a growing number of symmetric players and take their payoff function to be our cost.
    Then the value of the game~\eqref{eq:un_mfg} becomes
    \begin{equation*}
        u_n=\inf_{x\in[-1,1]^n} 5\bar s_n^{(1,1)}(x) - 4\bar s_n^{(2,1)}(x) - \bar s_n^{(1)}(x), 
    \end{equation*}
    which is once again neither convex nor concave. 
    We then get the lower bounds for $n\geq 3$
    \begin{equation*}
        \ell_n = \inf_{x\in[-1,1]^n} 5\bar m_n^{(1,1)}(x) - 4\bar m_n^{(2,1)}(x) - \bar m_n^{(1)}(x).
    \end{equation*}
    These are precisely the optimization problems from Example~\ref{ex:mfg_intro} in Section~\ref{sec:intro}.
    We numerically compute and plot $u_n$ and $\ell_n$ for $n\in[3,21]$ in Figure~\ref{fig:mfg_numerics}. Specifically, in Figure~\ref{fig:mfg_bounds} we obtain upper bounds on $u_n$ and intervals containing $\ell_n$; the upper bounds on $u_n,\ell_n$ are computed using a nonconvex solver (Matlab's \texttt{fmincon}), while the lower bounds on $\ell_n$ are computed using a degree-$4$ sums-of-squares relaxation.  In Figure~\ref{fig:mfg_bound_diffs} we plot the error bars containing the differences $u_n-\ell_n$ against the theoretical bound on these differences from Theorem~\ref{thm:free_sym_bds}, using $k=3$ and $\|q_3\|_{[-1,1]^3}\leq 10$ to obtain~\eqref{eq:theor_bd_mfg}. 
    Note that the upper bounds $u_n$ are not monotonically decreasing. This does not contradict~\eqref{eq:upper-bound-fd-pops}, which only guarantees $u_{nk}\leq u_n$. 
\end{example}

\subsection{Inequalities in Symmetric Functions}\label{sec:symmetric_funcs}


\paragraph{Model problem.} Dually to the previous section, we consider freely-described problems over $\mscr U=\mscr Z$. Freely-described polynomials over $\mscr Z$ are precisely the extensively-studied symmetric functions~\cite[Chap.~7]{Stanley_Fomin_1999}, and certifying inequalities between them which hold in all dimensions is therefore of interest.  Any homogeneous symmetric polynomial is a linear combination of products of power sums $s_n^{(m)}(x)=\sum_{i=1}^nx_i^m$ by~\cite[\S7.7]{Stanley_Fomin_1999}.  Moreover, certifying nonnegativity of a homogeneous polynomial is equivalent to certifying that its minimum over a norm ball is nonnegative. Thus, we consider freely-described problems of the form
\begin{equation}\label{eq:pwr_sums_prob}
    u_n = \inf_{\substack{x\in\RR^n\\\|x\|_1\leq 1}}\sum_{\lambda} c^{(\lambda)} s_n^{(\lambda)}(x),
\end{equation}
where for each partition $\lambda = (\lambda_1 \geq \dots \geq \lambda_k > 0)$ we set $s_n^{(\lambda)}(x)=\prod_{i=1}^k s_n^{(\lambda_{i})}(x)$ as the product of power sums.
Here we systematically construct lower bounds for symmetric functions which hold in all dimensions in terms of freely-symmetrized problems over $\mscr D$. 


\paragraph{Basis for $\mscr L=\mscr D$.} 
Consider any partition $\lambda=(\lambda_1\geq\ldots\geq\lambda_{\mathrm{len}(\lambda)}>0)$ where $\mathrm{len}(\lambda)$ denotes the length of $\lambda$ and $|\lambda| = \sum_i \lambda_i$.  We view $\lambda\in\mbb N^k$ for any $k\geq \mathrm{len}(\lambda)$ by zero-padding $\lambda$, and let $\msf S_k$ act on $\lambda$ by coordinate permutations. 
The monomial sums are defined by sums over orbits of such partitions for $x \in \RR^k$:
\begin{equation}\label{eq:monomial-sums}
    m_k^{(\lambda)}(x) = \sum_{\lambda'\in\msf S_k\lambda}x^{\lambda'}.
\end{equation}
For example, for each $m \in \mbb N$ we have $m_k^{(m)}=s_k^{(m)}$ and the $m$'th elementary symmetric polynomial $m_k^{(1^m)}(x)=\sum_{i_1<\ldots<i_m}x_{i_1}\cdots x_{i_m}$, where $(1^m)=(1,\ldots,1)$ is the partition consisting of $m$ ones.
Note that $\deg m_k^{(\lambda)}=|\lambda|$.  For each $k\geq \mathrm{len}(\lambda)$ the monomial sums $m_k^{(\lambda)}$ are scalar multiples of the monomial means $\bar m_k^{(\lambda)}$ in~\eqref{eq:monomial_means}. 
In particular, for each $k\geq d$ the monomial sums $\{m_k^{(\lambda)}\}_{|\lambda|\leq d}$ form a basis for $\RR[x_1,\ldots,x_k]_{\leq d}^{\msf S_k}$.

\paragraph{Basis for $\mscr U=\mscr Z$.} 
We proceed to derive $\mbb E[m_k^{(\lambda)}\circ L_{k,n}]$ for each $k$.
To state the resulting expressions, we recall the refinement partial order on partitions. 
Specifically, for two partitions $\mu,\lambda$, we set $\lambda\leq_R\mu$ if $\lambda$ is a \emph{refinement} of $\mu$, meaning that there exists a surjection $f\colon[\mathrm{len}(\lambda)]\to[\mathrm{len}(\mu)]$, denoted $f(\lambda)=\mu$, such that $\sum_{j\in f^{-1}(i)}\lambda_j=\mu_i$ for each $i\in[\mathrm{len}(\mu)]$. 
For example, $(1,1,1)\leq_R(2,1)\leq_R(3)$ are all the refinements of $(3)$.
Define the integer $R_{\lambda,\mu}$ to be the number of such surjections $f$. 
The refinement partial order and integers $R_{\lambda,\mu}$ are classical objects of study, e.g., they relate power sums and monomial sums via~\cite[Prop.~7.7.1]{Stanley_Fomin_1999}
\begin{equation}\label{eq:R_trans_matrix}
    s_n^{(\lambda)}(x) = \sum_{\lambda\leq_R\mu}R_{\lambda,\mu}m_n^{(\mu)}(x),
\end{equation}
for any $n\geq \mathrm{len}(\lambda)$. 
Remarkably, these integers $R_{\lambda,\mu}$ also appear when applying our de Finetti map to monomial sums.
\begin{proposition}\label{prop:rep_thm_symmetric_funcs}
    For any $k\geq\mathrm{len}(\lambda)$, we have that $p_{n}^{(k,\lambda)}(x) = \mbb E[m_k^{(\lambda)}(L_{k,n}x)]$ for $x \in \RR^n$ is given by
        \begin{equation}\label{eq:fancy_symmetric_funcs}
            p_{n}^{(k,\lambda)}(x) = \sum_{\mu\leq_R\lambda}\frac{\lambda!}{\mu!}\cdot\frac{(k)_{\mathrm{len}(\lambda)}}{k^{\mathrm{len}(\mu)}}\cdot\frac{R_{\mu,\lambda}}{R_{\lambda,\lambda}}\cdot m_n^{(\mu)}(x),
        \end{equation}
        where $(k)_{\ell}=\frac{k!}{(k-\ell)!}$ and $\lambda!=\prod_{i=1}^{\mathrm{len}(\lambda)}\lambda_i!$. 
\end{proposition}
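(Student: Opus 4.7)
The plan is to compute $p_n^{(k,\lambda)}(x)=\mbb E[m_k^{(\lambda)}(L_{k,n}x)]$ by a direct combinatorial expansion and then read off its coefficients in the $m_n^{(\mu)}$ basis. Since $(\mscr U,\mscr L)=(\mscr Z,\mscr D)$, the de Finetti map is $L_{k,n}=\rho(F)^\star$ with $F\colon[n]\to[k]$ uniform, so $(L_{k,n}x)_i=\sum_{j\in F^{-1}(i)}x_j$. Writing
\begin{equation*}
    m_k^{(\lambda)}(y)=\frac{1}{|\mathrm{Stab}_{\msf S_k}(\lambda)|}\sum_{\sigma\in\msf S_k}y^{\sigma\lambda},\qquad |\mathrm{Stab}_{\msf S_k}(\lambda)|=R_{\lambda,\lambda}\,(k-\mathrm{len}(\lambda))!,
\end{equation*}
where the latter identity uses that permutations of the zero-padded coordinates are free, I would expand each $(L_{k,n}x)^{\sigma\lambda}=\prod_j(L_{k,n}x)_{\sigma(j)}^{\lambda_j}$ as a sum indexed by functions $\phi\colon T_\lambda\to[n]$, where $T_\lambda=\{(j,\ell):j\in[\mathrm{len}(\lambda)],\,\ell\in[\lambda_j]\}$ is the Young diagram of $\lambda$ equipped with the row projection $\pi(j,\ell)=j$; each term contributes the monomial $x^\phi=\prod_{(j,\ell)}x_{\phi(j,\ell)}$ weighted by $\prod_{(j,\ell)}\mathbbm{1}[F(\phi(j,\ell))=\sigma(j)]$.

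Taking expectation over $F$, the indicator product has probability $k^{-|\mathrm{Im}(\phi)|}$ precisely when $\phi$ factors through $\pi$, meaning every fiber of $\phi$ lies in a single row $\pi^{-1}(j)$, and vanishes otherwise; crucially, this probability is independent of $\sigma$. Summing over $\sigma$ and normalizing yields
\begin{equation*}
    p_n^{(k,\lambda)}(x)=\frac{(k)_{\mathrm{len}(\lambda)}}{R_{\lambda,\lambda}}\sum_{\phi\in A_\lambda}x^\phi\,k^{-|\mathrm{Im}(\phi)|},
\end{equation*}
where $A_\lambda$ is the set of $\phi\colon T_\lambda\to[n]$ factoring through $\pi$.

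The combinatorial heart of the argument, and the place where the bookkeeping is the most delicate, is regrouping the above sum by the partition $\mu$ formed from the multiset of fiber sizes of $\phi$. For a fixed $\mu$, each such $\phi$ is specified by $(i)$ an ordered tuple $(v_1,\ldots,v_{\mathrm{len}(\mu)})$ of distinct elements of $[n]$ with the pairing $v_i\leftrightarrow\mu_i$, $(ii)$ a surjection $\beta\colon[\mathrm{len}(\mu)]\to[\mathrm{len}(\lambda)]$ telling which row $\pi^{-1}(j)$ each fiber lies in, which the condition $\sum_{i\in\beta^{-1}(j)}\mu_i=\lambda_j$ forces to witness $\mu\leq_R\lambda$ and admit exactly $R_{\mu,\lambda}$ choices, and $(iii)$ for each $j$, a partition of $[\lambda_j]$ into labelled blocks of the prescribed sizes, yielding $\prod_j\lambda_j!/\prod_{i\in\beta^{-1}(j)}\mu_i!=\lambda!/\mu!$ choices independently of $\beta$. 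Each $\phi$ arises from exactly $R_{\mu,\mu}$ such ordered tuples (those differing by a permutation in $\mathrm{Stab}_{\msf S_{\mathrm{len}(\mu)}}(\mu)$), and this overcount cancels precisely against the identity $\sum_{(v_1,\ldots,v_{\mathrm{len}(\mu)})\text{ distinct}}\prod_i x_{v_i}^{\mu_i}=R_{\mu,\mu}\,m_n^{(\mu)}(x)$, producing
\begin{equation*}
    \sum_{\phi\in A_\lambda}x^\phi\,k^{-|\mathrm{Im}(\phi)|}=\sum_{\mu\leq_R\lambda}R_{\mu,\lambda}\frac{\lambda!}{\mu!}\frac{1}{k^{\mathrm{len}(\mu)}}m_n^{(\mu)}(x).
\end{equation*}
Substituting this into the previous display yields the claimed formula~\eqref{eq:fancy_symmetric_funcs}, which can be sanity-checked on small cases (e.g., $\lambda=(2)$ gives $m_n^{(2)}+(2/k)m_n^{(1,1)}$ in agreement with a direct calculation).

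For the basis claim, I would observe that $\{m_n^{(\mu)}\}_{|\mu|=d}$ is a basis of $\RR[\vct V_n]_d^{\msf S_n}$ whenever $n\geq d$, and that~\eqref{eq:fancy_symmetric_funcs} expresses the transition matrix from $(p_n^{(k,\lambda)})_{|\lambda|=d}$ to $(m_n^{(\mu)})_{|\mu|=d}$ as triangular with respect to any total order refining $\leq_R$ (for instance, grouping partitions by $\mathrm{len}$), with diagonal entries $(k)_{\mathrm{len}(\lambda)}/k^{\mathrm{len}(\lambda)}$. These diagonals are strictly positive whenever $k\geq\mathrm{len}(\lambda)$, which is guaranteed by the hypothesis $k\geq d\geq\mathrm{len}(\lambda)$. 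The change-of-basis matrix is therefore invertible and $(p_n^{(k,\lambda)})_{|\lambda|=d}$ is a basis. The main obstacle I anticipate is the counting in the third paragraph, especially the exact cancellation of the two occurrences of $R_{\mu,\mu}$ and the verification that the surjection count in step $(ii)$ matches the definition of $R_{\mu,\lambda}$ used in~\eqref{eq:R_trans_matrix}.
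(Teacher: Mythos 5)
Your proof is correct, and the conclusion matches~\eqref{eq:fancy_symmetric_funcs} with a valid justification for the basis claim. The underlying combinatorics is the same as in the paper --- both expand the expectation, regroup by the fiber-size partition $\mu$, and identify $R_{\mu,\lambda}\cdot\lambda!/\mu!$ as the relevant count --- but you organize the double sum differently. The paper writes $\mbb E = k^{-n}\sum_f$ over deterministic maps $f\colon[n]\to[k]$, applies the multinomial theorem per $f$ to get a sum over multi-indices $\mu'$ with $f(\mu')=\lambda$, and then regroups the sum over $(f,\mu')$ by $\mu$. You instead expand $m_k^{(\lambda)}$ as a stabilizer-normalized sum over $\sigma\in\msf S_k$, parameterize each $(L_{k,n}x)^{\sigma\lambda}$ by assignments $\phi\colon T_\lambda\to[n]$ of Young-diagram boxes to coordinates, integrate out $F$ per $\phi$ (obtaining $k^{-|\mathrm{Im}(\phi)|}$ on the support $A_\lambda$), and regroup the sum over $\phi$ by $\mu$. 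These are dual parameterizations of the same expansion; your version has a slightly cleaner expectation step at the cost of a more delicate orbit-count cancellation (the two occurrences of $R_{\mu,\mu}$), which you carry out correctly. One minor slip: you write that the nonvanishing condition is ``$\phi$ factors through $\pi$'', but what you then describe --- every fiber of $\phi$ lies in a single row $\pi^{-1}(j)$ --- is the condition that $\pi$ factors through $\phi$ (the fibers of $\phi$ refine those of $\pi$), not the reverse; the clarifying phrase makes your intent unambiguous, but the terminology should be fixed. The identity $|\mathrm{Stab}_{\msf S_k}(\lambda)|=R_{\lambda,\lambda}(k-\mathrm{len}(\lambda))!$, the surjection count $R_{\mu,\lambda}$, the block-partition count $\lambda!/\mu!$, and the triangularity/positivity argument for the basis claim are all correct.
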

\begin{proof}
We have $L_{k,n}x=\sum_{i=1}^k\left(\sum_{j\in F_{k,n}^{-1}(i)}x_j\right)e_i$ where $F_{k,n}\colon[n]\to[k]$ is uniformly random, by definition of the de Finetti map in Section~\ref{sec:free_poly_opts}. 
    Then
    \begin{equation*}
        p_n^{(k,\lambda)}(x) = \frac{(k)_{\mathrm{len}(\lambda)}}{R_{\lambda,\lambda}}\mbb E\left[\prod_{i=1}^{\mathrm{len}(\lambda)}\left(\sum_{j\in F_{k,n}^{-1}(i)}x_j\right)^{\lambda_i}\right]=\frac{(k)_{\mathrm{len}(\lambda)}}{R_{\lambda,\lambda}}\cdot\frac{1}{k^n}\sum_{f\colon[n]\to[k]}\prod_{i=1}^{\mathrm{len}(\lambda)}\left(\sum_{j\in f^{-1}(i)}x_j\right)^{\lambda_i},
    \end{equation*}
    where the first equality follows since $m_k^{(\lambda)}$ is a sum of $(k)_{\mathrm{len}(\lambda)}/R_{\lambda,\lambda}$ monomials by~\cite[Cor.~7.7.2, Prop.~7.8.3]{Stanley_Fomin_1999}, and the second equality follows from the summands being permutations of each other and getting mapped to the same image.
    To simplify the above expression, note that
    \begin{equation*}
       \prod_{i=1}^{\mathrm{len}(\lambda)}\left(\sum_{j\in f^{-1}(i)}x_j\right)^{\lambda_i} = \prod_{i=1}^{\mathrm{len}(\lambda)}\sum_{\substack{\mu^{(i)}\in\mbb N^{f^{-1}(i)}\\ |\mu^{(i)}|=\lambda_i}}\frac{\lambda_i!}{\mu^{(i)}!}x^{\mu^{(i)}} = \sum_{\substack{\mu'\in\mbb N^n\\ f(\mu')=\lambda}}\frac{\lambda!}{\mu'!}x^{\mu'},
    \end{equation*}
    where the first equality is just the multinomial theorem. For the second equality, split a multi-index $\mu'\in\mbb N^n$ indexed by $[n]$ into multi-indices indexed by the fibers of $f$, i.e., $\mu^{(i)}=(\mu'_j)_{j\in f^{-1}(i)}$ for each $i\in[k]$. Now note that $x^{\mu'}=\prod_{i=1}^kx^{\mu^{(i)}}$ appears on the right-hand side if and only if $|\mu^{(i)}|=\lambda_i$, or more concisely, iff $f(\mu')=\lambda$, and each such $\mu'$ appears $\prod_{i=1}^{\mathrm{len}(\lambda)}\frac{\lambda_i}{\mu^{(i)}!}=\frac{\lambda!}{\mu'!}$ times.
    Finally, 
    \begin{equation*}
        \sum_{f\colon[n]\to[k]}\prod_{i=1}^{\mathrm{len}(\lambda)}\left(\sum_{j\in f^{-1}(i)}x_j\right)^{\lambda_i} = \sum_{\mu\leq_R\lambda}k^{n-\mathrm{len}(\mu)}R_{\mu,\lambda}\frac{\lambda!}{\mu!}\sum_{\mu'\in\msf S_n\mu}x^{\mu'} = \sum_{\mu\leq_R\lambda}k^{n-\mathrm{len}(\mu)}\frac{\lambda!}{\mu!}R_{\mu,\lambda}m_n^{(\mu)}(x),
    \end{equation*}
    where the first equality follows since if $\mu$ is the unique partition obtained as a permutation of a multi-index $\mu'$, then $\mu'!=\mu!$ and there are $R_{\mu,\lambda}k^{n-\mathrm{len}(\mu)}$ maps $f\colon[n]\to[k]$ such that $f(\mu')=\lambda$. 
    Combining the two displayed equations above yields~\eqref{eq:fancy_symmetric_funcs}. 

\end{proof}
We conclude from Propositions~\ref{prop:free_sym_and_free_descr_isom} and~\ref{prop:calc_for_stability} that $\{(p_n^{(k,\lambda)})_n\}_{|\lambda| = d}$ is a basis for freely-described homogeneous polynomials of degree $d$ for each $k\geq d$. These bases do not appear to have been studied before.  

\paragraph{Dual cost.} For each $k\geq d$, expand $p_n=\sum_\lambda c^{(k,\lambda)}p_n^{(k,\lambda)}$ in the $k$th basis from Proposition~\ref{prop:rep_thm_symmetric_funcs}.
We can obtain $c^{(k,\lambda)}$ by first changing basis from power sums to monomial sums using~\eqref{eq:R_trans_matrix}, and then changing to the bases $\{p_n^{(k,\lambda)}\}$ using~\eqref{eq:fancy_symmetric_funcs}.  The $k$'th dual cost is then given by $q_k = \sum_\lambda c^{(k,\lambda)} m_k^{(\lambda)}$,
and the lower bounds on~\eqref{eq:pwr_sums_prob} are given by $\ell_n=\inf_{\substack{x\in\RR^n\\\|x\|_1\leq 1}}q_n(x)$.
We then have $\ell_n\leq \ell_{nk}$ for all $n,k$ and $u_{nk}-\ell_{nk}\lesssim 1/n$ for fixed $k$ and growing $n$.  

\paragraph{$\FS$-compatible sets.} The sequences of constraint sets $(\Omega_n\subseteq \RR^{n})$ for which our framework is applicable consist of $\msf S_n$-invariant compact sets satisfying $(x_1+\ldots+x_k,\dots,x_{nk-k+1}+\ldots+x_{nk})\in\Omega_n$ if $x\in\Omega_{nk}$ and $(x,0) \in\Omega_{n+1}$ if $x\in\Omega_n$.  These assumptions are satisfied by the sequence of simplices $\Omega_n=\Delta^{n-1}$ and $\ell_1$-balls, but not by $\ell_p$-balls for any $p>1$ (this is the reason we use the $\ell_1$-balls in~\eqref{eq:pwr_sums_prob} instead of the more commonly-used $\ell_2$-balls).

\begin{example}[Quadratic symmetric functions]
    Any degree-2 symmetric polynomial can be written in the power sums basis as
    \begin{equation*}
        p_n(x) = \alpha_1s_n^{(2)}(x) + \alpha_2s_n^{(1,1)}(x).
    \end{equation*}
    For each $k\geq 2$, our bases~\eqref{eq:fancy_symmetric_funcs} are given by
    \begin{equation*}\begin{aligned}
        &p_n^{(k,(1,1))}(x) = \frac{k(k-1)}{k^2}m_n^{(1,1)}(x) = \frac{1}{2}\left(1-\frac{1}{k}\right)(s_n^{(1,1)}(x)-s_n^{(2)}(x)),\\
        &p_n^{(k,(2))}(x) = m_n^{(2)}(x) + \frac{2}{k}m_n^{(1,1)}(x) = \left(1-\frac{1}{k}\right)s_n^{(2)}(x) + \frac{1}{k}s_n^{(1,1)}(x).
    \end{aligned}\end{equation*}
    This gives a linear system we can invert to represent $p_n$ in terms of our basis functions, yielding
    \begin{equation*}
        p_n = (\alpha_1+\alpha_2)p_n^{(k,(2))} + 2\left(\alpha_2-\frac{1}{k-1}\alpha_1\right)p_n^{(k,(1,1))}.
    \end{equation*}
    The dual cost is then given by
    \begin{equation*}\begin{aligned}
        q_n &= (\alpha_1+\alpha_2)m_n^{(2)} + 2\left(\alpha_2-\frac{1}{n-1}\alpha_1\right)m_n^{(1,1)} = \left(\frac{n \alpha_1}{n-1}\right)s_n^{(2)} + \left(\frac{\alpha_2(n-1) - \alpha_1}{n-1}\right)s_n^{(1,1)}.
    \end{aligned}\end{equation*}
    As a simple example of our lower bounds, consider
    \begin{equation*}
        u_n = \inf_{x\in\Delta^{n-1}}s_n^{(2)}(x) = \frac{1}{n},
    \end{equation*}
    for which $u_{\infty}=0$. For all $n \geq 2$, we get the lower bounds
    \begin{equation*}
        \ell_n = \inf_{x\in\Delta^{n-1}}\frac{n}{n-1}s_n^{(2)}(x) - \frac{1}{n-1}s_n^{(1,1)}(x)=0.
    \end{equation*}
\end{example}
For higher degree problems, we automate the derivation of $q_n$ from $p_n$, as the relevant change-of-basis matrices can be formed and inverted computationally. The code to do so is available on \href{https://github.com/eitangl/anyDimPolyOpt/blob/main/trans_matrix_p2q.m}{our GitHub}, using the code of~\cite{transition_mats} to form the matrices $R_{\mu,\lambda}$. We utilize this code below to prove lower bounds that hold in all dimensions for one of the examples of~\cite{acevedo2024symmetric}.
\begin{example}
    In~\cite[Thm.~3.6]{acevedo2024symmetric}, the authors show that \begin{equation}\label{eq:bad_quartic}
        p_n=4s_n^{(4)}-\frac{139}{20}s_n^{(3,1)} + 4s_n^{(2,2)} - 5s_n^{(2,1,1)} + 4s_n^{(1^4)},
    \end{equation}
    is a symmetric function which is nonnegative for all $n$ but is not a sum of squares for any $n\geq 4$.  
    Since $p_n$ is homogeneous of degree 4, certifying that $p_n\geq0$ for all $n$ amounts to certifying $u_{\infty}\geq0$ where $u_n=\inf_{\|x\|_1\leq 1}p_n(x)$.
    Our framework yields finite-dimensional polynomial bounds $\ell_n=\inf_{\|x\|_1\leq 1}q_n(x)$ where
    \begin{equation*}\begin{aligned}
        q_n &= \tfrac{n^3}{(n-1)(n-2)(n-3)}\Bigg[4(1+3\tfrac{1}{n})s_n^{(4)} - \tfrac{1}{20}(139-97\tfrac{1}{n}+960\tfrac{1}{n^2})s_n^{(3,1)} + 4(1-9\tfrac{1}{n}+9\tfrac{1}{n^2})s_n^{(2,2)}\\ &-\tfrac{1}{20}(100-757\tfrac{1}{n}-69\tfrac{1}{n^2})s_n^{(2,1,1)} + \frac{1}{10}(2-\tfrac{1}{n})(20-85\tfrac{1}{n}+3\tfrac{1}{n^2})s_n^{(1^4)}\Bigg],
    \end{aligned}\end{equation*}
    for $n\geq 4$.
    Running both a nonconvex solver (Matlab's \texttt{fmincon}) and a degree-$6$ sums-of-squares relaxation, we find the lower bounds $\ell_n$ in Table~\ref{tab:lower_bds_bad_quartic} up to the displayed digits. 
    Note that we are only guaranteed that $\ell_{nk}\geq \ell_n$ but the values we obtain increase monotonically with $n$.
    \begin{table}[h]
    \centering
    \begin{tabular}{c|ccccc}
        $n$ & 4 & 5 & 6 & 7 & 8\\ \hline
        $\ell_n$ & -0.8403 & -0.4023 & -0.2541 & -0.1817 & -0.1396
    \end{tabular}
    \caption{Lower bounds over $\ell_1$-balls for~\eqref{eq:bad_quartic}. Here $u_{\infty}=0$.}
    \label{tab:lower_bds_bad_quartic}
    \end{table}
\end{example}
Finally, we return to Example~\ref{ex:SAGE_nonSOS} from Section~\ref{sec:intro}, prove that the symmetric function there is not a sum of squares (SOS), and apply our framework to certify its nonnegativity.
\begin{example}\label{ex:SAGE_nonSOS_explicit}
    Note that the symmetric function $(p_n=f(s_1,s_2,s_4,s_6))$ in~\eqref{eq:sage_nonSOS_poly} is not SOS in any dimension $n\geq 2$, since for $n=2$ it reduces to $p_2(x)=x_1^4x_2^2+x_1^2x_2^4-\frac{3}{2}x_1^2x_2^2+1$. If this were SOS then the coefficient of $x_1^2x_2^2$ would be nonnegative~\cite[Exer.~3.97]{SOS_chapter}, a contradiction. Similarly, if $p_n$ admitted an SOS decomposition for $n>2$ then restricting this decomposition to vectors of the form $(x_1,x_2,0,\ldots,0)$ would yield an SOS decomposition for $p_2$, a contradiction.

    Applying our framework to the freely-described problems $u_n=\inf_{x\in\RR^n}p_n(x)$, we get the lower bounds $\ell_n=\inf_{x\in\RR^n}q_n(x)$ with
    \begin{equation*}\begin{aligned}
        q_n = \tfrac{n}{2(n-1)}\Bigg[&m_n^{(4,2)} + m_n^{(4,1^2)} + 2m_n^{(3,2,1)} + \tfrac{6(n-2)}{n-3}m_n^{(3,1^3)}+9m_n^{(2^3)}+\tfrac{3(3n-10)}{n-3}m_n^{(2^2,1^2)}\\&  + \tfrac{3(7n-18)}{(n-3)}m_n^{(2,1^4)} + \tfrac{45(n-2)(n-6)}{(n-3)(n-5)}m_n^{(1^6)}-3m_n^{(2^2)} -3m_n^{(2,1^2)} -\tfrac{9(n-2)}{n-3}m_n^{(1^4)}\Bigg]+1.
    \end{aligned}\end{equation*}
    for $n=2,4$ and $n\geq 6$. In particular, for $n=2$ any monomial sum $m_n^{(\lambda)}$ with $\mathrm{len}(\lambda)\geq3$ vanishes, so we get $q_2(x)=m_2^{(4,2)}(x)-3m_2^{(2^2)}(x)+1$, which is precisely the Motzkin polynomial.
\end{example}
We remark that Proposition~\ref{prop:rep_thm_symmetric_funcs} only guarantees the existence of $q_k\in\RR[\vct V_k]$ satisfying $(p_n)=(\mbb Eq_k\circ L_{k,n})$ when $k\geq \deg p_n$. The denominators of the coefficients of $q_k$ in the above examples vanish precisely for those $k<\deg p_n$ for which there is no such $q_k$.


\subsection{Inequalities in Graph Homomorphism Densities}\label{sec:graph_densities}
\paragraph{Model problem.} We consider the problem of minimizing linear expressions in weighted graph homomorphism densities as in Example~\ref{ex:graph_densities}, which are freely-described problems over $\mathrm{Sym}^2\mscr D$.  A graph $X$ on $n$ vertices with weights on both its edges and vertices is represented by its adjacency matrix, which we also denote $X\in\mbb S^n$ by abuse of notation, where $X_{i,i}$ is the weight on vertex $i$ and $X_{i,j}$ is the weight on edge $\{i,j\}$.  An unweighted graph $H$, possibly containing multiple edges and self-loops, is denoted $H\in \mbb S^k(\mathbb{N})$, and the homomorphism number of $H$ in $X$ is defined by~\cite[Eq.~(5.3)]{lovasz2012large}
\begin{equation}\label{eq:hom_nums}
\begin{aligned}
    \mathrm{hom}(H;X) = \sum_{f\colon [k]\to [n]}\prod_{\substack{1\leq i\leq j\leq k\\ H_{i,j}\neq0}}X_{f(i),f(j)}^{H_{i,j}},
\end{aligned}
\end{equation}
which is a polynomial over $\mbb S^n$.  If $H$ has no isolated vertices, note that $\hom(H;X)$ is unchanged if we relabel the vertices or append extra isolated vertices to $X$; the former operation corresponds to conjugating the adjacency matrices by permutation matrices, and the latter to zero-padding the adjacency matrices. The homomorphism density of $H$ in $X$ is defined as:
\begin{equation}
    \begin{aligned}
        t(H;X) = \frac{\mathrm{hom}(H;X)}{n^k},
    \end{aligned}
\end{equation}
where $n^k$ is the number of all maps $[k]\to [n]$.  This is once again a polynomial over $\mbb S^n$.  The homomorphism density only depends on the isomorphism class $[H]$ of $H$ with all isolated vertices removed, and we denote the number of non-isolated vertices in $H$ by $|V(H)|$ and the number of edges by $|H|=\sum_{i\leq j}H_{i,j}$.  Certifying inequalities in homomorphism densities over unweighted graphs amounts to considering the sequence of problems
\begin{equation}\label{eq:graph_densities_prob}
    u_n=\inf_{X\in\mbb S^n(\{0,1\})}\sum\nolimits_j c_jt(H_j;X),
\end{equation}
for some unweighted graphs $H_j$, and certifying that $u_\infty\geq0$. We remark that problems of the form~\eqref{eq:graph_densities_prob} encompass not only minimization of linear combinations but also of \emph{polynomials} in homomorphism densities, since $t(H_1;X)\cdot t(H_2;X)=t(H_1\sqcup H_2;X)$ where $H_1\sqcup H_2 = \mathrm{blkdiag}(H_1,H_2)$ denotes the disjoint union of the two multigraphs. We proceed to show that these problems are freely-described over $\mathrm{Sym}^2\mscr D$. Conversely, we show that any such freely-described problem can be written as~\eqref{eq:graph_densities_prob} for more general constraint sets $\Omega_n$ and for some $H_j\in\mbb S^k(\NN)$.

\paragraph{Basis for $\mscr L=\mathrm{Sym}^2\mscr Z$.} 
Define the injective homomorphism number $\mathrm{inj}(H;X)$ similarly to~\eqref{eq:hom_nums} except that we only sum over injective maps $[k]\hookrightarrow [n]$, and define $t_{\mathrm{inj}}(H;X) = \frac{\mathrm{inj}(H;X)}{(n)_{k}}$ to be the fraction of injective maps $[k]\hookrightarrow [n]$ that are homomorphisms, where $(n)_k=\frac{n!}{(n-k)!}$ is the total number of these injective maps.  
For $k\geq|V(H)|$, consider the monomial $X^H = \prod_{1\leq i\leq j\leq k}X_{i,j}^{H_{i,j}}\in\RR[\mbb S^k]_{|H|}$ for $X \in \mbb S^k$. 
Recalling the action $\rho$ of maps between finite sets from Section~\ref{sec:deFinetti_intro}, we have the following polynomial by symmetrizing $X^H$
\begin{equation}\label{eq:tinj_is_sym}
    t_{\mathrm{inj}}(H;X) = \frac{(k-|V(H)|)!}{k!}\sum_{f\colon[|V(H)|]\hookrightarrow[k]}(\rho(f)X)^H = \frac{1}{k!}\sum_{g\in\msf S_k}((\zeta_{k,|V(H)|}^{\otimes 2})^\star g\cdot X)^H = \mathrm{sym}_k^{\mathrm{Sym}^2\mscr Z}X^H,
\end{equation}
since a map $f\colon[|V(H)|]\to[k]$ is injective precisely when $f=g\iota_{k,|V(H)|}$ for some $g\in\msf S_k$, and $\rho(\iota_{k,|V(H)|})=(\zeta_{k,|V(H)|}^{\otimes 2})^\star$.  Thus, the polynomials $t_{\mathrm{inj}}(H;\cdot)$ are symmetrizations of monomials, in analogy to the monomial means from Section~\ref{sec:power_means}.  For $k\geq |V(H)|$ denote by $q_k^{[H]}(X)=t_{\mathrm{inj}}(H;X)$ the restriction of $t_{\mathrm{inj}}(H;X)$ to $X \in \mbb S^k$, which only depends on the isomorphism class $[H]$ of $H$. 
Observe that the collection $\{q_k^{[H]}\}_{|H|\leq d}$ as $[H]$ ranges over isomorphism classes of graphs with at most $d$ edges forms a basis for $\RR[\mbb S^k]_{\leq d}^{\msf S_k}$ for all $k\geq 2d$.
Indeed, any monomial over $\RR[\mbb S^k]_{\leq d}$ is of the form $X^H$ for some graph $H$ with $|H|\leq d$, which must have at most $2d$ non-isolated vertices, so symmetrizations of these monomials form a basis for $\msf S_k$-invariants. 
Furthermore,~\eqref{eq:tinj_is_sym} shows that $q_K^{[H]}=\mathrm{sym}_K^{\mathrm{Sym}^2\mscr Z}q_k^{[H]}$ for all $k\leq K$. Thus, the collection $\{(q_k^{[H]})_{k\geq 2d}\}_{|H|\leq d}$ is a basis for freely-symmetrized elements over $\mathrm{Sym}^2\mscr Z$ of degree $\leq d$, again recovering a special case of~\cite{levin2023free}.



\paragraph{Basis for $\mscr U=\mathrm{Sym}^2\mscr D$.}
To derive $\mbb E[q_k^{[H]}\circ L_{k,n}]$, observe that $(L_{k,n}X)_{i,j}=X_{F_{n,k}(i),F_{n,k}(j)}$ for $X \in \mathbb{S}^n$, where $F_{n,k}(1),\ldots,F_{n,k}(k)$ are sampled iid uniformly from $[n]$. 
Therefore,
\begin{equation*}
    \mbb E[t_{\mathrm{inj}}(H;L_{k,n}X)] = \mbb E[\mathrm{sym}_{\msf S_k}(L_{k,n}X)^H] = \mbb E[(L_{k,n}X)^H] = \frac{1}{n^k}\sum_{f\colon[k]\to[n]}(\rho(f) X)^H = t(H;X),
\end{equation*}
where the first equality was shown above, and the second follows since $gL_{k,n}\overset{d}{=}L_{k,n}$ for all $g\in\msf S_k$.  Denoting by $p_n^{[H]}(X)=t(H;X)$ the restriction of $t(H;X)$ to $X \in \mbb S^n$, we conclude that $(p_n^{[H]})$ is freely-described over $\mathrm{Sym}^2\mscr D$ for each $H$, and that the collection $\{(p_n^{[H]})_n\}_{|H|\leq d}$ forms a basis for such freely-described polynomials of degree at most $d$ by Propositions~\ref{prop:free_sym_and_free_descr_isom} and~\ref{prop:calc_for_stability}.

\begin{remark}[Connection to graphons]
    The above conclusions could have been derived using the connection between our generalized de Finetti theorem and the theory of graphons explained in Example~\ref{ex:graphon_deFin}. Briefly, a graphon is a symmetric, bounded, measurable function $W\colon[0,1]^2\to\RR$, and we can define the homomorphism densities $t(H;W)$ by an integral analog of~\eqref{eq:hom_nums}, see~\cite[\S7.2]{lovasz2012large}. For any $X\in\mbb S^n$ we can define a step graphon $W_X$ as in Example~\ref{ex:graphon_deFin}, and note that $t(H;X)=t(H;W_X)$, showing that $t(H;X)$ is freely-described over $\mathrm{Sym}^2\mscr D$ as $X$ and $X\otimes\mathbbm{1}\mathbbm{1}^\top$ define the same step graphon. Furthermore, note that $L_{k,n}X\overset{d}{=}(W_X(x_i,x_j))_{i,j\in[k]}$ where $x_1,\ldots,x_k$ are sampled iid uniformly from $[0,1]$. 
    The definition of $t(H;W)$ shows that $\mbb E[t_{\mathrm{inj}}(H;L_{k,n}X)]=\mbb E[(L_{k,n}X)^H]=t(H;X)$.
\end{remark}

\paragraph{Dual cost.} For a freely-described problem expressed in terms of homomorphism densities as in~\eqref{eq:graph_densities_prob}, form the dual cost $q_n(X)=\sum_jc_jt_{\mathrm{inj}}(H_j;X)$ for $n\geq V=\max_j|V(H_j)|$. 
The lower bounds on~\eqref{eq:graph_densities_prob} are then given by $\ell_n=\inf_{X\in\mbb S^n(\{0,1\})}q_n(X)$, which satisfy $\ell_n\leq \ell_{n+1}$ and $u_n-\ell_n\leq\frac{V(V-1)\sum_j|c_j|}{n}$ by Theorem~\ref{thm:free_sym_bds}, since $|t_{\mathrm{inj}}(H;X)|\leq 1$ for $X\in\mbb S^n(\{0,1\})$. This is the convergence bound from Example~\ref{ex:graph_densities_intro} in Section~\ref{sec:intro}.

\paragraph{$\FS$-compatible sets.} The sequences of constraint sets $(\Omega_n\subseteq \mbb S^n)$ to which our framework is applicable consist of $\msf S_n$-invariant sets satisfying $\zeta_{n,n-1}^\star X\zeta_{n,n-1}\in\Omega_{n-1}$ and $\delta_{nk,n}X\delta_{nk,n}^\star \in\Omega_{nk}$ whenever $X\in\Omega_n$, where $\zeta$ and $\delta$ are the zero-padding and duplication embeddings from Section~\ref{sec:consist_seqs}.  Examples of constraints satisfying the above hypotheses include entrywise constraints, such as the set of simple graphs $\{X\in\mbb S^n(\{0,1\}):\diag(X)=0\}$, and $H$-homomorphism-free simple graphs for any graph $H$ (i.e., sets of simple graphs $G$ with $t(H;G)=0$). Non-examples include sets specified by spectral constraints such as $\lambda_{\max}(X)\leq 1$ or $\mathrm{Tr}(X)\leq 1$, as well as their normalized counterparts $\lambda_{\max}(X)/n\leq 1$ or $\mathrm{Tr}(X)/n\leq 1$. 


\begin{example}[Goodman's Theorem]\label{ex:goodman}
    Goodman's theorem states that $t(K_3;X)-2t(K_2\sqcup K_2;X)+t(K_2;X)\geq0$ for all simple graphs $X$ of all sizes, or equivalently, that $u_n=\inf_{X\in\mbb S^n(\{0,1\})} t(K_3;X)-2t(K_2\sqcup K_2;X)+t(K_2;X)$ is equal to zero for all $n$ (since $X=K_n$ gives objective value zero). Our framework yields the lower bounds $\ell_n=\inf_{X\in\mbb S^n(\{0,1\})} t_{\mathrm{inj}}(K_3;X)-2t_{\mathrm{inj}}(K_2\sqcup K_2;X) + t_{\mathrm{inj}}(K_2;X)$ for $n\geq 4$.  That is, the costs in the lower bounds are simply given by replacing $t$ by $t_{\mathrm{inj}}$.
    By Tur\'an's theorem, the minimizer is attained at the complete bipartite graph $K_{\lfloor n/2\rfloor, \lceil n/2\rceil}$, so
    \begin{equation*}
        \ell_n = \begin{cases} -\frac{n}{2(n^2-4n+3)} & n \textrm{ even},\\ -\frac{n+1}{2(n^2-2n)} & n\textrm{ odd}.\end{cases}
    \end{equation*}
    We thus have $u_n-\ell_n =  \frac{1}{2n} + o(\frac{1}{n})$, while the bound from Theorem~\ref{thm:free_sym_bds} gives $u_n-\ell_n\leq \frac{48}{n}$.
\end{example}

\begin{example}\label{ex:ramsey_mult}
    The \emph{Ramsey multiplicity} of a graph $H$ is defined as $\mathrm{mult}(H) = u_{\infty}$ where $u_n=\inf_{X\in\mbb S^n(\{0,1\})}t(H;X) + t(H;1-X)$. Burr and Rosta~\cite{burr_rosta} conjectured that $\mathrm{mult}(H)=2^{1-|H|}$ for all graphs $H$, generalizing a conjecture of Erd\H{o}s~\cite{Erdös1969} for complete graphs. 
    Both conjectures turned out to be false. Burr and Rosta's conjecture was disproved by Sidorenko~\cite{sidorenko1989cycles}, who showed that the triangle with a pendant edge $H$, the graph on $V(H)=[4]$ with edges $E(H)=\{\{1,2\},\{2,3\},\{2,4\},\{3,4\}\}$, has $\mathrm{mult}(H)\leq 3/4-4\sqrt{2}/9<1/8$. 
    Since then, there has been much follow-up work yielding better upper bounds on Ramsey multiplicities, especially for complete graphs~\cite{parczyk2024new}. We use our framework to produce \emph{lower} bounds on the Ramsey multiplicity of Sidorenko's counterexample above. 
    Our bounds read $\ell_n = \inf_{X\in\mbb S^n(\{0,1\})}t_{\mathrm{inj}}(H;X)+t_{\mathrm{inj}}(H;1-X)$, and we compute $\ell_n$ by exhaustively searching over all simple graphs with at most 10 vertices, obtaining the bounds in Table~\ref{tab:ramsey_mult}. 
    
    \begin{table}[h]
    \centering
    \begin{tabular}{c|ccccc}
        $n$ & $\leq 6$ & 7, 8 & 9, 10 \\ \hline
        $\ell_n$ & 0 & 0.0286 &0.0476
    \end{tabular}
    \caption{Lower bounds for Ramsey multiplicity from Example~\ref{ex:ramsey_mult}.}
    \label{tab:ramsey_mult}
    \end{table}
\end{example}

\subsection{Inequalities in Graph Homomorphism Numbers}\label{sec:graph_numbers}
\paragraph{Model problem.} 
We study the problem of minimizing linear expressions in homomorphism \emph{numbers}, which are freely-described problems over $\mathrm{Sym}^2\mscr Z$.  For example, consider certifying nonnegativity of a linear expression $\sum_jc_j\mathrm{hom}(H_j;X)$ in homomorphism numbers over all weighted graphs $X$, where $\mathrm{hom}$ is defined in \eqref{eq:hom_nums} and we assume throughout this section that the $H_j$ have no isolated vertices, with the convention $\mathrm{hom}(K_1;X)= 1$.  If $|H_j|=d$ for all $j$, i.e., all graphs $H_j$ have the same number of edges $d$, then proving nonnegativity of the above expression is equivalent to considering
\begin{equation}\label{eq:inj_number_ineq}
    u_n=\inf_{\substack{X\in\mbb S^n\\ \|X\|_1\leq 1}}\sum\nolimits_jc_j\mathrm{hom}(H_j;X),
\end{equation}
and certifying $u_\infty\geq0$.
We apply our framework to derive lower bounds on $u_\infty$ in terms of freely-symmetrized problems over $\mathrm{Sym}^2\mscr D$.
The development in this section parallels that in Section~\ref{sec:symmetric_funcs}, as we observe below.

\paragraph{Basis for $\mscr L=\mathrm{Sym}^2\mscr D$.}
If $H\in\mbb S^k(\mbb N)$ denote by $X^H = \prod_{1\leq i\leq j\leq k}X_{i,j}^{H_{i,j}}\in\RR[\mbb S^k]_{|H|}$ as before. We view $H$ as an element of $\mbb S^n(\mbb N)$ for any $n\geq k$ by zero-padding it, and define the polynomial over $\mbb S^n$
\begin{equation*}
    m_n^{[H]}(X)=\sum_{G\in \msf S_n H}X^G,
\end{equation*}
which is the sum over all distinct monomials $X^G$ obtained by relabeling the vertices of $H$ by elements from $[n]$.
These are the analogs of monomial sums from Section~\ref{sec:symmetric_funcs}, with isomorphism classes of graphs $[H]$ replacing partitions $\lambda$; indeed, the monomial sums of Section~\ref{sec:symmetric_funcs} can be obtained by considering graphs with only self-loops whose adjacency matrix is $\mathrm{diag}(\lambda)$.  When $H$ and $X$ are simple graphs, the polynomial $m_n^{[H]}(X)$ counts the number of times $H$ appears as a subgraph of $X$ up to isomorphism (i.e., $m_k^{[H]}(H)=1$, for instance). More generally, we have
\begin{equation}\label{eq:m_to_inj_trans}
    m_n^{[H]}(X)=|\mathrm{Aut}(H)|^{-1}\mathrm{inj}(H;X),
\end{equation}
where $|\mathrm{Aut}(H)|$ is the number of automorphisms of $H$ without isolated vertices, which only depends on $[H]$.
Since $m_k^{[H]}(X)$ is a multiple of $t_{\mathrm{inj}}(H;X)$ for any $k\geq |V(H)|$, we conclude that the collection $\{m_k^{[H]}\}_{|H|\leq d}$ as $[H]$ ranges over isomorphism classes of graphs with at most $d$ edges forms a basis for $\RR[\mbb S^k]_{\leq d}^{\msf S_k}$ for all $k\geq 2d$, as the same is true for $t_{\mathrm{inj}}$ as shown in Section~\ref{sec:graph_densities}.


\paragraph{Basis for $\mscr U=\mathrm{Sym}^2\mscr Z$.}
In order to derive $\mbb E[m_k^{[H]}\circ L_{k,n}]$, we generalize the refinement partial order on partitions from Section~\ref{sec:symmetric_funcs} to graphs and prove an analog of Proposition~\ref{prop:rep_thm_symmetric_funcs}. This refinement partial order on graphs does not seem to have been studied before.  If $X\in\mbb S^n$ and $f\colon[n]\to[k]$, let $P_f=(f^{-1}(1),\ldots,f^{-1}(k))$ be the ordered partition of $[n]$ induced by $f$ and denote $X/P_f=\rho(f)^\star X = \sum_{i\leq j}X_{i,j}E_{f(i),f(j)}$ where $E_{i,j}=e_ie_i^\top$ if $i=j$ and $E_{i,j}=e_ie_j^\top + e_je_i^\top$ otherwise. 
The reason for this notation is that $\rho(f)^\star X$ is the adjacency matrix of the quotient graph $X/P_f$ obtained by identifying all vertices of $X$ in each partition of $P_f$, with the weight of an edge $\{P_i,P_j\}$ being the sum of the weights in $X$ of the edges $\{k,m\}$ with $k\in P_i$ and $m\in P_j$. See Figure~\ref{fig:graph_quotient} for an example, and see~\cite[Eq.~(6.2)]{lovasz2012large} for an example usage of this quotient. 
\begin{figure}[h]
    \centering
    \includegraphics[width=0.4\linewidth]{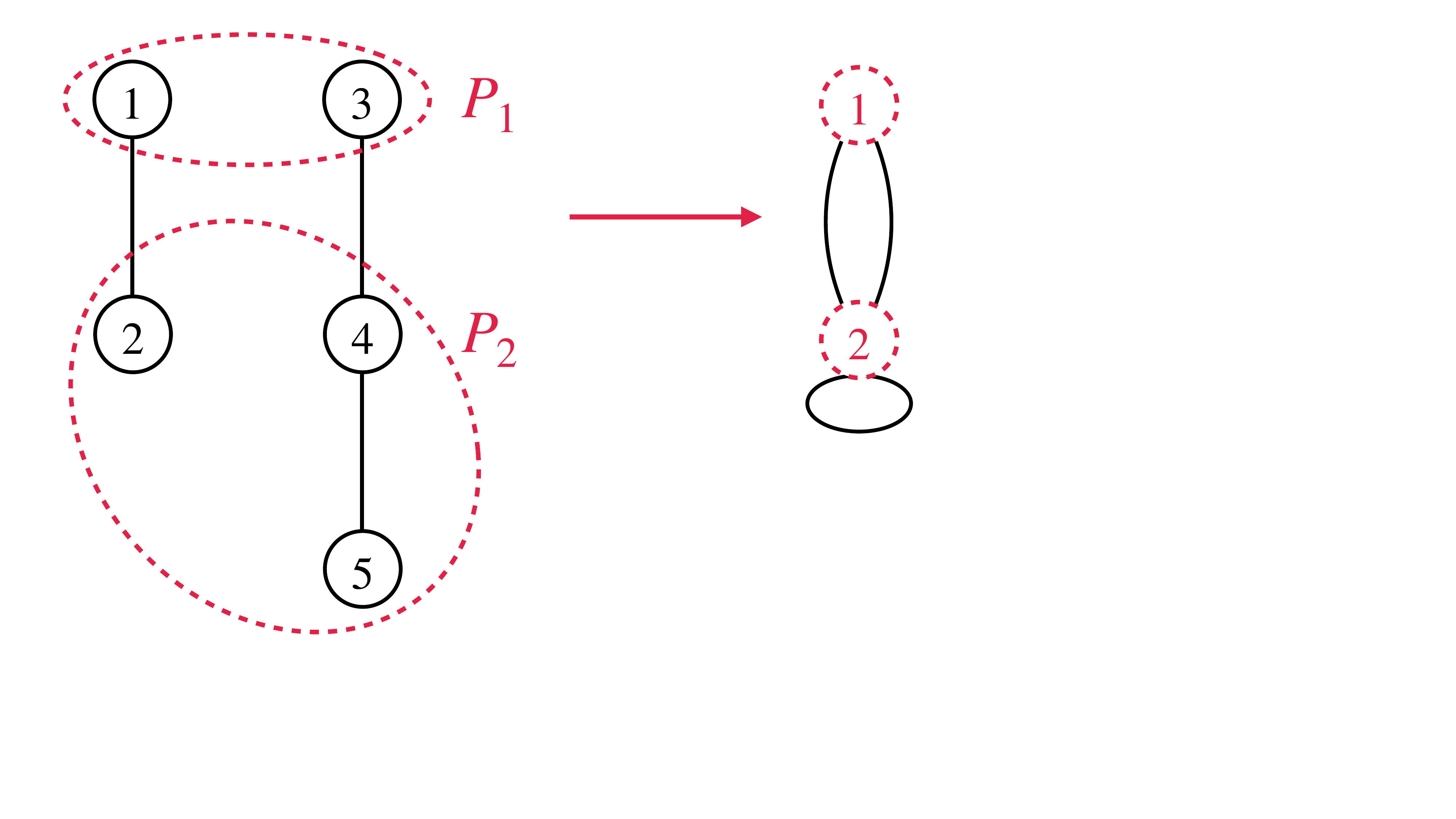}
    \caption{Example of graph quotient. Note that quotients of a simple graph need not be simple.}
    \label{fig:graph_quotient}
\end{figure}
We denote $G\leq_R H$ if $G$ is a \emph{refinement} of $H$, meaning that there is a surjective $f\colon V(G)\to V(H)$ such that $H = G/P_f$. For example, we show all refinements of $K_3$ in Figure~\ref{fig:triangle_refinements}. Note that if $G\leq_R H$ then $\sigma G\leq_R \tau H$ for any permutations $\sigma,\tau$, so we also write $[G]\leq_R[H]$. 
If $G\leq_R H$ and neither $G$ nor $H$ have isolated vertices, we let $R_{[G],[H]}$ be the number of such surjective maps $f$. Note that this extends our notation for partitions in Section~\ref{sec:symmetric_funcs} upon identifying partitions with diagonal matrices.
In particular, note the graph analog of the relation~\eqref{eq:R_trans_matrix} between power sums and monomial sums
\begin{equation}\label{eq:R_graph_trans_matrix}
    \mathrm{hom}(H;X) = \sum_{[H]\leq_R [G]}|\mathrm{Aut}(G)|^{-1}R_{[H],[G]}\mathrm{inj}(G;X) = \sum_{[H]\leq_R [G]}R_{[H],[G]}m_{[G]}(X),
\end{equation}
which follows since each homomorphism $f\colon H\to X$ can be factored into two homomorphisms $H\to H/P_f\hookrightarrow X$, and conversely every composition of a quotient map $H\to G$ with an injective homomorphism $G\hookrightarrow X$ gives a homomorphism $H\to X$ unique up to automorphisms of $G$. This shows that the homomorphism numbers are the graph analogs of power sums. Indeed, if $H=(k)$ is the graph on a single vertex with $k$ self loops, then $\mathrm{hom}(H;X)=\sum_{i=1}^nX_{i,i}^k$.
\begin{figure}[h]
    \centering
    \includegraphics[width=0.5\linewidth]{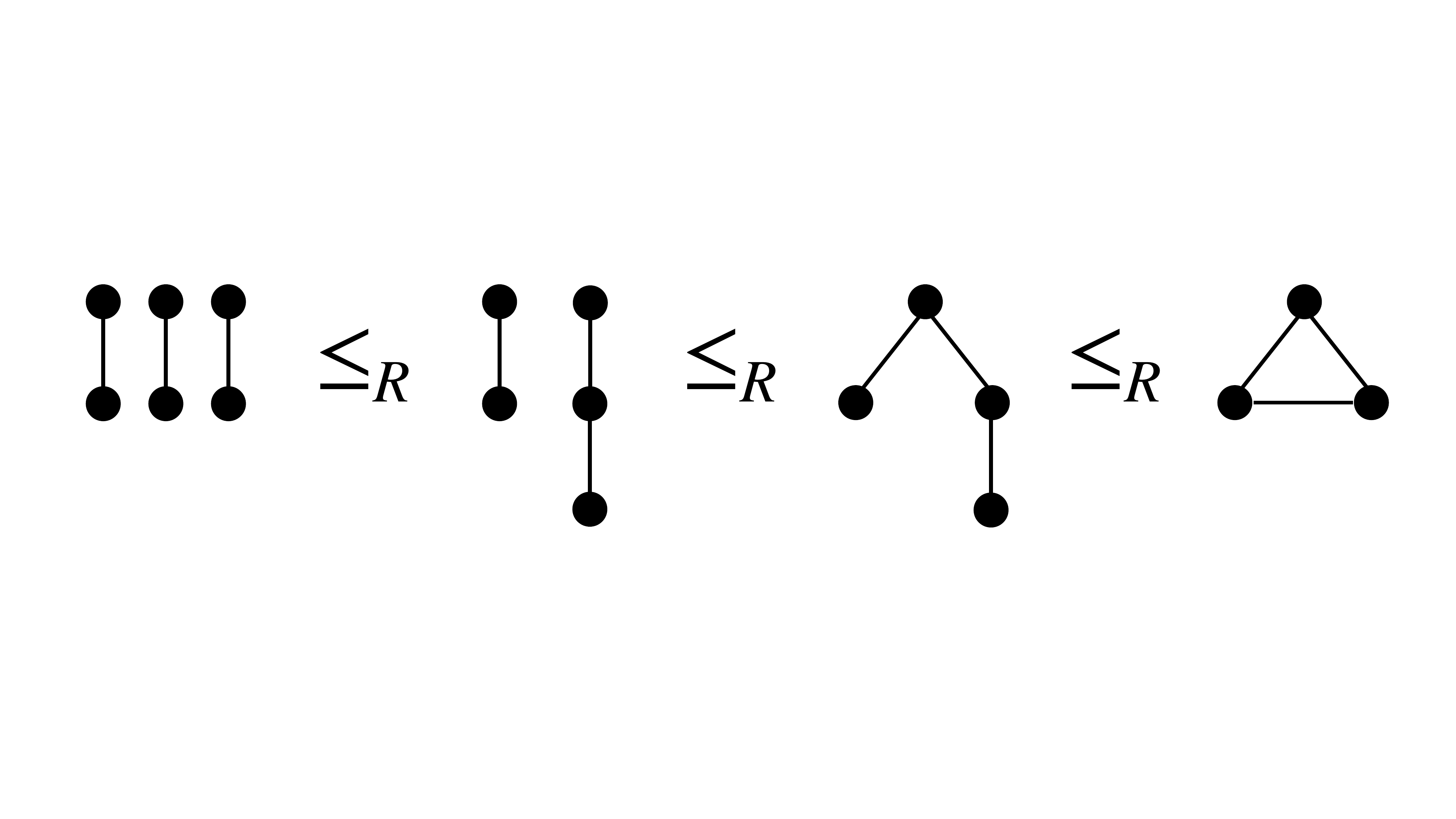}
    \caption{All refinements of $K_3$. Note that refinements of a simple graph are simple.}
    \label{fig:triangle_refinements}
\end{figure}
Using the above refinement order and the associated integers $R_{[G],[H]}$, we are ready to derive $\mbb E[m_k^{[H]}\circ L_{k,n}]$ in analogy with Proposition~\ref{prop:rep_thm_symmetric_funcs}.
\begin{proposition}\label{prop:graph_numbers}
    For any $k\geq |V(H)|$, we have that $p_{n}^{(k,[H])}(X) = \mbb E[m_k^{[H]}(L_{k,n}X)]$ for $X \in \mbb S^n$ is given by
    \begin{equation}\label{eq:fancy_symmetric_graph_funcs}
        p_n^{(k,[H])}(X) =  \sum_{[G]\leq_R [H]}\frac{H!}{G!}\cdot\frac{(k)_{|V(H)|}}{k^{|V(G)|}}\cdot\frac{R_{[G],[H]}}{R_{[H],[H]}}m_n^{[G]}(X),
    \end{equation}
    where $H!=\prod_{i\leq j}H_{i,j}!$.  As $[H]$ ranges over classes of graphs with at most $d$ edges, each of the collections $\{(p_n^{(k,[H])})_n\}_{|H|\leq d}$ where $k\geq 2d$, $\{(m_n^{[H]})_n\}_{|H|\leq d}$, $\{(\mathrm{inj}(H;\cdot)|_{\mbb S^n})_n\}_{|H|\leq d}$, and $\{(\mathrm{hom}(H;\cdot)|_{\mbb S^n})_n\}_{|H|\leq d}$ forms a basis for freely-described polynomials of degree at most $d$ over $\mathrm{Sym}^2\mscr Z$.
\end{proposition}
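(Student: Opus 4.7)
The plan is to mirror the proof of Proposition~\ref{prop:rep_thm_symmetric_funcs}, replacing partitions with isomorphism classes of multigraphs and the refinement order on partitions with the graph refinement order $\leq_R$ introduced just above. Because $(\mathrm{Sym}^2\mscr Z,\mathrm{Sym}^2\mscr D)$ is an $\FS$-pair, the de~Finetti map here is $L_{k,n}=\rho(F_{k,n})^\star$ for a uniformly random $F_{k,n}\colon[n]\to[k]$, with $L_{k,n}X=X/P_{F_{k,n}}$. Using the $\msf S_k$-invariance of $L_{k,n}$ (Lemma~\ref{lem:unif_map_invars}) together with the identity $m_k^{[H]}(Y)=\tfrac{(k)_{|V(H)|}}{R_{[H],[H]}}\mathrm{sym}_{\msf S_k}(Y^H)$, which follows since $\msf S_k H$ has size $(k)_{|V(H)|}/|\mathrm{Aut}(H)|=(k)_{|V(H)|}/R_{[H],[H]}$, I would first reduce to
\begin{equation*}
p_n^{(k,[H])}(X)=\frac{(k)_{|V(H)|}}{R_{[H],[H]}\,k^n}\sum_{f\colon[n]\to[k]}(X/P_f)^H.
\end{equation*}

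Next, I would expand $(X/P_f)^H=\prod_{i\leq j}((X/P_f)_{i,j})^{H_{i,j}}$ using the multinomial theorem. Each way of distributing the $H_{i,j}$ copies of the edge $(i,j)$ of $H$ among the pairs of fibers of $f$ corresponds to a graph $G\in\mbb S^n(\mbb N)$ with $G/P_f=H$, and contributes the monomial $X^G$ with coefficient $H!/G!$. Swapping the order of summation over $f$ and $G$ and grouping by isomorphism class $[G]$, for each labeled representative $G'\in\msf S_n[G]$ the number of maps $f$ with $G'/P_f=H$ factors as $R_{[G],[H]}\cdot k^{n-|V(G)|}$, where the first factor counts the restrictions of $f$ to $V(G')$ that achieve the required quotient and the second accounts for the free choice of $f$ on the isolated vertices of $G'$. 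Substituting and simplifying then yields \eqref{eq:fancy_symmetric_graph_funcs}.

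For the basis claim, the derived formula shows that the transition matrix from $\{m_n^{[G]}\}_{|G|\leq d}$ to $\{p_n^{(k,[H])}\}_{|H|\leq d}$ is upper triangular with respect to the refinement order on isomorphism classes, with nonzero diagonal entries $(k)_{|V(H)|}/k^{|V(H)|}$, hence invertible. Since $\{m_n^{[G]}\}_{|G|\leq d}$ is a basis for $\RR[\mbb S^n]_{\leq d}^{\msf S_n}$ whenever $n\geq 2d$ (noted in Section~\ref{sec:graph_densities}), so is $\{p_n^{(k,[H])}\}_{|H|\leq d}$. The identities~\eqref{eq:m_to_inj_trans} and~\eqref{eq:R_graph_trans_matrix} express $\{\mathrm{inj}(H;\cdot)\}$ and $\{\mathrm{hom}(H;\cdot)\}$ as triangular transformations of $\{m_n^{[G]}\}$ with nonzero diagonal entries $|\mathrm{Aut}(H)|$ and $R_{[H],[H]}$ respectively, so these collections are bases too. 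To lift each fixed-dimensional basis to a basis for freely-described polynomials of degree $\leq d$ over $\mathrm{Sym}^2\mscr Z$, I would verify that each of the four sequences $(m_n^{[H]})_n$, $(p_n^{(k,[H])})_n$, $(\mathrm{inj}(H;\cdot))_n$, and $(\mathrm{hom}(H;\cdot))_n$ is freely-described, which is immediate from the definitions since appending an isolated vertex via $X\mapsto\zeta_{n+1,n}^{\otimes 2}X$ leaves each polynomial unchanged, and then invoke the dimension count of Proposition~\ref{prop:calc_for_stability}.

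The main obstacle will be executing the matrix multinomial expansion in the middle step: the quotient entries $(X/P_f)_{i,j}$ are sums over ordered pairs of preimages and carry extra factors of two for off-diagonal contributions due to the symmetry of $X$, and these must be tracked carefully so that the resulting combinatorial coefficient collapses exactly to $H!/G!$ in the paper's convention. The partition-only special case of Proposition~\ref{prop:rep_thm_symmetric_funcs}, where $H$ and $G$ are diagonal and no off-diagonal subtleties arise, provides a useful sanity check for the resulting formula.
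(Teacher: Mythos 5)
Your proposal mirrors the paper's proof step for step: reduce to $\tfrac{(k)_{|V(H)|}}{R_{[H],[H]}k^n}\sum_f(X/P_f)^H$ using $\msf S_k$-invariance of $L_{k,n}$, expand $(X/P_f)^H$ via the multinomial theorem into $\sum_{G/P_f=H}\tfrac{H!}{G!}X^G$, swap the sums over $f$ and $G$ and count $\#\{f\colon G/P_f=H\}=R_{[G],[H]}k^{n-|V(G)|}$, then argue triangularity of the transition matrices for the basis claims and pass to sequences via Proposition~\ref{prop:calc_for_stability}. That is exactly the route the paper takes, and the parts after the multinomial step (the $R$-count, the upper-triangularity argument, the use of \eqref{eq:m_to_inj_trans}--\eqref{eq:R_graph_trans_matrix}, the freely-described lift) all go through as you describe.

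However, the factor-of-two issue you flag as "the main obstacle" is not a bookkeeping subtlety that resolves to $H!/G!$; it is a real discrepancy, and the sanity check you propose (the diagonal case of Proposition~\ref{prop:rep_thm_symmetric_funcs}) cannot detect it because $\RR^n$ has no symmetrization. The de~Finetti map on $\mbb S^n$ is $L_{k,n}=\rho(F_{k,n})^\star$, whose diagonal entries are $(L_{k,n}X)_{i,i}=\sum_{(j_1,j_2)\in F_{k,n}^{-1}(i)^2}X_{j_1,j_2}=\sum_{j}X_{j,j}+2\sum_{j_1<j_2}X_{j_1,j_2}$. The extra $2$ on each non-loop edge within a fiber persists through the multinomial expansion, so the coefficient of $X^G$ in $(X/P_f)^H$ is $\tfrac{H!}{G!}\,2^{\ell(H)-\ell(G)}$ rather than $\tfrac{H!}{G!}$, where $\ell(\cdot)$ counts self-loops, and the sum should run over the "edge-preserving'' refinement $\sum_{j_1\le j_2\in f^{-1}(i)}G_{j_1,j_2}=H_{i,i}$ rather than the one induced by $\rho(f)^\star$. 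A concrete counterexample: take $H$ a single vertex with one self-loop and $k=1$, so $F_{1,n}$ is deterministic and
\begin{equation*}
p_n^{(1,[H])}(X)=m_1^{[H]}(L_{1,n}X)=\sum_{i,j\in[n]}X_{i,j}=m_n^{[H]}(X)+2\,m_n^{[K_2]}(X),
\end{equation*}
whereas \eqref{eq:fancy_symmetric_graph_funcs} gives $m_n^{[H]}+m_n^{[K_2]}$ (or just $m_n^{[H]}$, depending on which quotient one uses to interpret $\leq_R$). The correction factor $2^{\ell(H)-\ell(G)}$ is $1$ on the diagonal $[G]=[H]$, so your triangularity argument and all four basis claims survive; but the coefficient formula \eqref{eq:fancy_symmetric_graph_funcs} as written, and any numerical pipeline built on it, needs the extra power of $2$ whenever a refinement destroys loops.
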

Observe that~\eqref{eq:fancy_symmetric_graph_funcs} reduces to~\eqref{eq:fancy_symmetric_funcs} when $H=\diag(\lambda)$ and $X$ consist only of self-loops. 
In fact, this entire proposition and its proof reduce to Proposition~\ref{prop:rep_thm_symmetric_funcs} in this way.
\begin{proof}
    Observe that if $X\in\mbb S^n$, then $L_{k,n}X\overset{d}{=}X/P_{n,k}$ where $P_{n,k}$ is a uniformly random ordered partition of $[n]$ into $k$ parts. 
    Indeed, we have $P_{n,k}\overset{d}{=}P_{F_{n,k}}$ since ordered partitions of $[n]$ into $k$ parts are in bijection with maps $[n]\to[k]$, hence a uniformly random element of the former is induced by a uniformly random element of the latter. We then have $L_{k,n}X = \rho(F_{k,n})^\star X = X/P_{F_{k,n}}\overset{d}{=}X/P_{n,k}$ by definition. 
    Therefore, for any $k\geq |V(H)|$ we have
    \begin{equation*}\begin{aligned}
        p_n^{(k,[H])}(X)&= |\msf S_kH|\cdot \mbb E[(X/P_{n,k})^H] = \frac{(k)_{|V(H)|}}{R_{[H],[H]}}\cdot\frac{1}{k^n}\sum_{f\colon[n]\to[k]}\prod_{i_1\leq i_2}\left(\sum_{\substack{(j_1,j_2)\in f^{-1}(i_1,i_2)\\ j_1\leq j_2}}X_{j_1,j_2}\right)^{H_{i_1,i_2}},
    \end{aligned}\end{equation*}
    since $|\msf S_kH|=(k)_{|V(H)|}/|\mathrm{Aut}(H)|$ and $|\mathrm{Aut}(H)|=R_{[H],[H]}$. Here $f^{-1}(i_1,i_2)=\{(j_1,j_2)\in[n]^2:(f(j_1),f(j_2))=(i_1,i_2) \textrm{ or } (f(j_2),f(j_1))=(i_1,i_2)\}$.
    Now observe that
    \begin{equation*}
        \prod_{i_1\leq i_2}\left(\sum_{\substack{(j_1,j_2)\in f^{-1}(i_1,i_2)\\ j_1\leq j_2}}X_{j_1,j_2}\right)^{H_{i_1,i_2}} = \prod_{i_1\leq i_2}\sum_{\substack{G^{(i_1,i_2)}\in \NN^{f^{-1}(i_1,i_2)}\\|G^{(i_1,i_2)}|=H_{i_1,i_2}}}\frac{H_{i_1,i_2}!}{G^{(i_1,i_2)}!}X^{G^{(i_1,i_2)}} = \sum_{\substack{G\in\mbb S^n(\NN)\\ G/P_f=H}}\frac{H!}{G!}X^G,
    \end{equation*}
    where $\mbb N^{f^{-1}(i_1,i_2)}$ consists of vectors indexed by $f^{-1}(i_1,i_2)$ with entries in $\mbb N$. 
    The first equality above is just the multinomial theorem. To see the second equality, note the bijection between $G\in\mbb N^{n\times n}\cap\mbb S^n$ such that $G/P_f=H$ and sequences $(G^{(i_1,i_2)}\in\mbb N^{f^{-1}(i_1,i_2)})_{i_1\leq i_2}$ with $|G^{(i_1,i_2)}|=H_{i_1,i_2}$ given by 
    $G\mapsto G^{(i_1,i_2)}_{j_1,j_2}=G_{j_1,j_2}$ and $(G^{(i_1,i_2)})\mapsto G_{j_1,j_2}=G^{(i_1,i_2)}_{j_1,j_2}$ if $(f(j_1),f(j_2))=(i_1,i_2)$ or $(i_2,i_1)$.
    Under this bijection, $X^G=\prod_{j_1\leq j_2}X_{j_1,j_2}^{G_{j_1,j_2}}=\prod_{i_1\leq i_2}X^{G^{(i_1,i_2)}}$, which proves the second equality above.
    Next, 
    \begin{equation*}\begin{aligned}
        \sum_{f\colon[n]\to[k]}\sum_{\substack{G\in\mbb S^n(\NN)\\ G/P_f=H}}\frac{H!}{G!}X^G &= \sum_{\substack{G\in\mbb S^n(\NN) \\ [G]\leq_R [H]}}k^{n-|V(G)|}R_{[G],[H]}\frac{H!}{G!}X^G=\sum_{[G]\leq_R[H]}k^{n-|V(G)|}R_{[G],[H]}\frac{H!}{G!}m^{[G]}_{n}(X),
    \end{aligned}\end{equation*}
    where the first equality follows since for each $G\leq_RH$ there are $R_{[G],[H]}k^{n-|V(G)|}$ maps $f\colon[n]\to[k]$ satisfying $G/P_f=H$ (note that there are $n-|V(G)|$ isolated vertices above), and the second follows since $|V(G)|$, $R_{[G],[H]}$ and $G!$ only depend on $G$ via $[G]$.
    Combining the above three displayed equations we get~\eqref{eq:fancy_symmetric_graph_funcs}.

    For the last claim, it is clear that the monomial sums $\{(m_n^{[H]})_n\}_{|H|\leq d}$ form a basis for freely-described polynomials, since $\{m_n^{[H]}\}_{|H|\leq d}$ is a basis for invariant polynomials of degree at most $d$ over $\mbb S^n$ for each $n\geq 2d$. 
    Further, we have $\mathrm{inj}(H;G)=|\mathrm{Aut}(H)|m_n^{[H]}(G)$ for all $H,G$, hence $\{(\mathrm{inj}(H;\cdot)|_{\mbb S^n})_n\}_{|H|\leq d}$ also gives a basis. 
    The relation~\eqref{eq:R_graph_trans_matrix} shows that the matrix expressing homomorphism numbers in terms of injective numbers is upper triangular in refinement order and has nonzero entries on the diagonal $R_{[H],[H]}=|\mathrm{Aut}(H)|\geq 1$. It is therefore invertible so we conclude that $\{(\mathrm{hom}(H;\cdot)|_{\mbb S^n})_n\}_{|H|\leq d}$ is also a basis. 
    Finally, the collection $\{(p_n^{(k,[H])})_n\}_{|H|\leq d}$ forms a basis by Propositions~\ref{prop:free_sym_and_free_descr_isom} and~\ref{prop:calc_for_stability}.
\end{proof}
We are not aware of prior studies on the refinement partial order on graphs, or of graph functions of the form~\eqref{eq:fancy_symmetric_graph_funcs}. In particular, it would be of interest to efficiently compute the integers $R_{[G],[H]}$ given two graphs, in analogy to the code~\cite{transition_mats} for the diagonal case.

\paragraph{Dual cost.} For each $k\geq 2d$, expand the freely-described cost as $p_n=\sum_{[H]} c^{(k,[H])}p_n^{(k,[H])}$ in the $k$'th basis from Proposition~\ref{prop:graph_numbers}. If $p_n$ is given in terms of (injective) homomorphism numbers as in~\eqref{eq:inj_number_ineq}, we can obtain $c^{(k,[H])}$ by first changing basis to $\{(m_n^{[H]})\}$ using~\eqref{eq:R_graph_trans_matrix} and~\eqref{eq:m_to_inj_trans}, and then changing to the bases $\{(p_n^{(k,[H])})\}$ using~\eqref{eq:fancy_symmetric_graph_funcs}.  The $k$'th dual cost is then given by $q_k=\sum_{[H]}c^{(k,[H])}m_k^{[H]}$, and the lower bounds on~\eqref{eq:inj_number_ineq} are given by $\ell_n=\inf_{\substack{X\in\mbb S^n\\ \|X\|_1\leq 1}}q_n(X)$. These satisfy $\ell_n\leq \ell_{nk}$ for all $n,k$ and $u_{nk}-\ell_{nk}\lesssim 1/n$ for fixed $k$ and growing $n$.  Similar to Section~\ref{sec:symmetric_funcs}, performing the preceding steps for some $k$ in principle yields the dual costs via symmetrization for all $q_{nk}$; however, the sequence of monomial sums $(m_k^{[H]})$ is not freely-symmetrized over $\mathrm{Sym}^2\mscr D$, so it is not in general true that $m_{nk}^{[H]}=\mathrm{sym}_{nk}^{\mathrm{Sym}^2\mscr D}m_k^{[H]}$.  As discussed at the beginning of Section~\ref{sec:construct_bds}, we have an explicit characterization of $(m_k^{[H]})$ as a function of $k$ in \eqref{eq:m_to_inj_trans}, so it is instead more convenient to directly perform the preceding three steps starting from the monomial sums $m_{nk}^{[H]}$ to obtain $q_{nk}$.


\paragraph{$\FS$-compatible sets.} The sequence of constraint sets $(\Omega_n\subseteq\mbb S^n)$ to which our framework applies consists of $\msf S_n$-invariant sets satisfying $\zeta_{n+1,n}X\zeta_{n+1,n}^\star \in\Omega_{n+1}$ if $X\in\Omega_n$ and $(\delta_{nk,n}^{\otimes 2})^\star X = \sum_{1\leq i\leq j\leq nk}X_{i,j}E_{\lceil i/k\rceil, \lceil j/k\rceil}\in\Omega_n$ if $X\in\Omega_{nk}$, where $\zeta$ and $\delta$ are the zero-padding and duplication embeddings from Section~\ref{sec:consist_seqs} and the adjoint is taken with respect to the inner product $\langle X,Y\rangle=\sum_{i\leq j}X_{i,j}Y_{i,j}$.
As in Section~\ref{sec:symmetric_funcs}, these assumptions are satisfied by the simplices $\{X\in\mbb S^n:X\geq0, \sum_{i\leq j}X_{i,j}=1\}$, but not (for example) by their spectral analogues $\{X\in\mbb S^n: X\succeq0, \mathrm{Tr}(X)=1\}$.

\begin{example}
    Let $G_1=K_2\sqcup K_2\sqcup K_2\leq_R G_2=K_2\sqcup P_3\leq_R G_3=P_4\leq_R G_4=K_3$ be the chain in Figure~\ref{fig:triangle_refinements}. Then
    \begin{equation*}
        R_{[G_1],[G_{4}]}=48,\quad R_{[G_2],[G_{4}]}=12,\quad R_{[G_3],[G_4]}=6,\quad R_{[G_4],[G_4]}=6,
    \end{equation*}
    Since these are all simple graphs, we also have $G_i!=1$ for all $i$. 
    Therefore, we have
    \begin{equation*}
        p_{n}^{(k,[K_3])} = \frac{k(k-1)(k-2)}{k^3}\left(m^{[K_3]}_{n} + \frac{1}{k} m^{[P_4]}_{n} + \frac{2}{k^2}m^{[K_2\sqcup P_3]}_{n} + \frac{8}{k^3}m^{[K_2\sqcup K_2\sqcup K_2]}_{n}\right).
    \end{equation*}
\end{example}

\begin{example}\label{ex:graph_nums}
    In Example~\ref{ex:graph_nums_intro} from Section~\ref{sec:intro}, we consider the freely-described problem~\eqref{eq:un_graph_nums_intro}, where we seek the largest $\gamma\in\RR$ satisfying $\mathrm{inj}(P_3;X) - \mathrm{inj}(K_2\sqcup K_2; X)\geq \gamma \mathrm{hom}(K_2;X)^2$ for all $X\geq0$ of all sizes. Applying Proposition~\ref{prop:graph_numbers}, we obtain the lower bounds~\eqref{eq:ln_graph_nums_intro} for $n\geq 4$.
    We numerically compute upper bounds on $u_n$ and intervals containing $\ell_n$ for $n\in[4,9]$, with the results shown in Table~\ref{tab:graph_inj_numerics}.  The upper bounds on $u_n,\ell_n$ are computed using a nonconvex solver (Matlab's \texttt{fmincon}), while the lower bounds on $\ell_n$ are computed using a degree-$4$ sums-of-squares relaxation.

\end{example}





\section{Conclusions and Future Work}\label{sec:concs}
We have studied sequences of POPs of growing dimension and presented a framework to produce lower bounds on their limiting optimal values in terms of finite-dimensional POPs. Our framework is based on reformulating the limiting optimal value as an optimization problem over measures projecting onto each other. To produce convergent sequences of finite-dimensional relaxations for such problems, we prove new generalizations of de Finetti's theorem approximating such sequences of measures by  particularly simple ones.
Our proofs are based on a new perspective on de Finetti theorems via representations of categories, which also underpins results from representation stability that we use throughout the paper. 
Finally, we demonstrated our bounds on a range of numerical experiments.
There are a number of questions suggested by our work.
\begin{enumerate}[font=\textbf, align=left]

    
    \item[(Beyond permutations)] Our generalizations of de Finetti's theorem relied on $\FS$-representations, and hence only apply to representations of the permutation groups $\msf S_n$. How can we further generalize these theorems to sequences of representations of other groups, in particular, the other classical Weyl groups and Lie groups? 
    
    \item[(Finite step convergence)] For which freely-described problems do we have $\ell_m=u_\infty$ for some finite $m\in\NN$, i.e., when does our hierarchy of lower bounds converge in finitely many steps? When does it converge in one step? From our numerical experiments, it seems that this is the case for generic symmetric functions, for instance.
    Finite-step convergence would imply a finite-dimensional reformulation for $u_\infty$. 
    
    \item[(Semialgebraicity of limiting cones)] When is the cone of nonnegative freely-described polynomials from Section~\ref{sec:any_dim_nonneg_cones} semialgebraic? This would imply the decidability of checking $u_\infty\geq0$.

    \item[(Any-dimensional Positivstellens\"atze)] Are there any-dimensional analogs of fixed-dimensional Positivstellens\"atze, such as those due to Krivine, Schm\"udgen, and Putinar? Can we obtain such results for all freely-described costs $(p_n)$ or just a subset of them? How should the sequence of constraints $(\Omega_n)$ be described for such results to hold? 

    \item[(Bases for symmetric functions)] Classical bases for symmetric functions and the relations between them have rich connections to combinatorics and representation theory~\cite{Stanley_Fomin_1999,macdonald1998symmetric}. It would be interesting to develop such connections for the de Finetti maps acting on these symmetric functions, as well as the bases $(p_n^{(k,\lambda)})$ and $(p_n^{(k,[H])})$ over vectors and graphs they produce (see Propositions~\ref{prop:rep_thm_symmetric_funcs} and~\ref{prop:graph_numbers}).
\end{enumerate}

\bibliographystyle{unsrt}
\bibliography{free_cvx_refs}
\end{document}